\documentclass[12pt]{article}
\usepackage[margin=1in]{geometry}
\usepackage{amsmath, amsthm, amssymb, mathtools, bm,mathrsfs}
\usepackage{natbib}
\usepackage{bbm}
\usepackage{booktabs} 
\usepackage{enumitem}
\usepackage[colorlinks=true, allcolors=blue]{hyperref}
\usepackage{comment}
\usepackage{xcolor}
\usepackage{algorithm}
\usepackage[noend]{algpseudocode}
\title{Worst-Case Maximal Inequalities for Heavy-tailed Random Vectors}
\author{Woonyoung Chang\footnote{\href{mailto:woonyouc@gmail.com}{woonyouc@gmail.com}}}
\date{\today}

\newtheorem{theorem}{Theorem}[section]

\newtheorem{lemma}[theorem]{Lemma}
\newtheorem{proposition}[theorem]{Proposition}
\newtheorem{corollary}[theorem]{Corollary}
\theoremstyle{definition}

\newtheorem{remark}{Remark}

\DeclarePairedDelimiterX{\mnorm}[1]
{\vvvert}
{\vvvert}
{\ifblank{#1}{\:\cdot\:}{#1}}

\newcommand{\floor}[1]{\lfloor#1\rfloor}

\newcommand{\ceil}[1]{\lceil#1\rceil}

\newcommand{\norm}[1]{\Vert#1\Vert}
\newcommand{\Abs}[1]{\left|#1\right|}
\newcommand{\abs}[1]{|#1|}
\newcommand{\Set}[1]{\left\{#1\right\}}

\newcommand{\Real}{\mathbb R}

\newcommand{\Dc}{\mathcal{D}}
\newcommand{\Ec}{\mathcal{E}}

\newcommand{\Mc}{\mathcal{M}}

\newcommand{\Pc}{\mathcal{P}}

\newcommand{\Eb}{\mathbb{E}}

\newcommand{\Pb}{\mathbb{P}}

\newcommand{\Pp}{\mathbb P}
\newcommand{\one}{\mathbbm{1}}
\allowdisplaybreaks

\begin{document}

\maketitle
\begin{abstract}
This paper establishes finite-sample worst-case maximal inequalities for averages of independent centered heavy-tailed random vectors. The object of interest is the expected top-$k$ Euclidean norm of the sample average, which includes the expected coordinate-wise maximum as the special case $k=1$. Under coordinatewise variance constraints and tail-envelope constraints, the worst-case value is characterized up to universal constants over the class of distributions satisfying a finite $q$:th envelope moment condition. Analogous bounds are obtained for the sub-Weibull envelope class and the marginal sub-Weibull class.
\end{abstract}

\section{Introduction}
Maximal inequalities for sums of independent random vectors are a basic tool in high-dimensional probability and statistics.  Classical bounds for suprema of empirical processes and Banach-space-valued sums are often expressed through entropy or type constants; see, for example, \citet{Ledoux2011Probability}, \citet{Dudley1999Uniform}, and \citet{Gine2016Mathematical}. For independent summands, Bennett's inequality \citep{Bennett1962} gives the standard bounded-envelope benchmark. In the literature on high-dimensional central limit theorems, high-dimensional bootstrap theory, and high-dimensional mean estimation, the relevant object is typically a maximum over coordinates, and the dependence on the variance, dimension, and tail size has to be kept explicit; see \citet{Chernozhukov2017Central} and the more recent refinements of \citet{Chernozhukov2023Nearly}.

The object of interest is
\begin{equation*}
    \Eb\left\|\frac1n\sum_{i=1}^nX_i\right\|_{(k),2},
\end{equation*}
where, for $x=(x(1),\ldots,x(p))^\top\in\Real^p$ and $k\in[p]$,
\begin{equation*}
    \norm{x}_{(k),2}
    =
    \sup_{J\subseteq[p],\ |J|\le k}
    \left(\sum_{j\in J}x(j)^2\right)^{1/2}.
\end{equation*} Equivalently, if $|x|_{(1)}\geq\ldots\geq|x|_{(p)}$ denotes the decreasing rearrangement of the absolute coordinates, then
\begin{equation*}
    \norm{x}_{(k),2} = \left(\sum_{j=1}^k|x|_{(j)}^2\right)^{1/2}.
\end{equation*}
This norm is the dual of the $k$-support norm of \citet{Argyriou2012sparse}. The case $k=1$ corresponds to the coordinate-wise maximum, the object most commonly used in high-dimensional maximal inequalities. Allowing $k>1$ replaces the largest coordinate by the Euclidean norm of the largest $k$ coordinates. This gives a quantitative interpolation between a maximum-type functional and a sparse Euclidean functional. Closely related ordered-coordinate statistics (or $L$-statistics) appear in adaptive high-dimensional testing, where one orders marginal statistics and aggregates a prescribed number of leading terms; see, for example, \citet{ma2024adaptivelstatisticshighdimensional}.

The formulation is worst-case over distribution classes.  This point of view separates the effect of the coordinate-wise variance from the effect of the tail envelope and gives finite-sample benchmarks for maximal inequalities. 

\section{Finite-moment envelope classes}\label{sec:finite-moment}
For $p\geq 1$, let $\Pc(\Real^p)$ denote the class of probability distributions on $\Real^p$.  For $n\geq1$, $q\geq2$, $\sigma>0$, and $B>0$, define $\Pc_{n,p}(q,\sigma,B)$ as the class of product measures $P^n=P_1\otimes\cdots\otimes P_n\in\Pc(\Real^p)^{\otimes n}$ such that, for $X_i\sim P_i$,
\begin{equation*}
    \Eb X_i=0_p,
    \qquad
    V(P^n):=\max_{1\leq j\leq p}\frac1n\sum_{i=1}^n \Eb X_i(j)^2\leq \sigma^2,
\end{equation*}
and
\begin{equation*}
    \Mc_q(P^n):=\frac1n\sum_{i=1}^n\Eb\max_{1\leq j\leq p}|X_i(j)|^q\leq B^q.
\end{equation*}
The quantity of interest is
\begin{equation}\label{eq:quantity_of_interest}
    \Ec_{n,p,k,q}(\sigma,B):=
    \sup_{P^n\in\Pc_{n,p}(q,\sigma,B)}
    \Eb_{P^n}\left\|\frac1n\sum_{i=1}^n X_i\right\|_{(k),2}.
\end{equation}
The envelope moment automatically controls the variance. Indeed, by Jensen's inequality, for $q\ge2$,
\begin{equation*}
    \max_{1\leq j\leq p} \left(\frac{1}{n}\sum_{i=1}^n\Eb[X_i(j)^2]\right)^{1/2}\leq
    \max_{1\leq j\leq p} \left(\frac{1}{n}\sum_{i=1}^n\{\Eb[X_i(j)^2]\}^{q/2}\right)^{1/q}\leq
    \left(\frac{1}{n}\sum_{i=1}^n \Eb\max_{1\leq j\leq p}|X_i(j)|^q\right)^{1/q}.
\end{equation*} Thus $\Mc_q(P^n)\leq B^q$ implies $V(P^n)\leq B^2$, and hence
\begin{equation*}
    \Pc_{n,p}(q,\sigma,B)=\Pc_{n,p}(q,\sigma\wedge B,B),
    \qquad
    \Ec_{n,p,k,q}(\sigma,B)=\Ec_{n,p,k,q}(\sigma\wedge B,B).
\end{equation*}

The iid subclass is obtained by restricting $P^n$ to product measures of the form $P^{\otimes n}$ with $P\in\Pc_{1,p}(q,\sigma,B)$. Formally, we define
\begin{equation*}
    \Ec^*_{n,p,k,q}(\sigma,B)
    :=
    \sup_{P\in\Pc_{1,p}(q,\sigma,B)}
    \Eb_{P^{\otimes n}}\left\|
    \frac1n\sum_{i=1}^n X_i
    \right\|_{(k),2},
\end{equation*}
where $X_1,\ldots,X_n$ are iid with law $P$, $\Eb X=0_p$, $\max_j\Eb X(j)^2\leq\sigma^2$, and $\Eb\max_j|X(j)|^q\leq B^q$.

\begin{proposition}\label{prop:iid_reduction_sparse}
Let $n\geq1$, $p\geq1$, $k\in[p]$, $q\geq2$, and $\sigma,B>0$. Then
\begin{equation*}
    \Ec_{n,p,k,q}^*(\sigma,B)\leq\Ec_{n,p,k,q}(\sigma,B)\leq16\Ec_{n,p,k,q}^*(\sigma,B).
\end{equation*}
\end{proposition}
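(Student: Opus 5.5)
The lower bound is immediate. If $P\in\Pc_{1,p}(q,\sigma,B)$, then $P^{\otimes n}$ satisfies $V(P^{\otimes n})=\max_j\Eb X(j)^2\le\sigma^2$ and $\Mc_q(P^{\otimes n})=\Eb\max_j|X(j)|^q\le B^q$. So $P^{\otimes n}\in\Pc_{n,p}(q,\sigma,B)$, and $\Ec^*\le\Ec$.

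For the upper bound, fix $P^n=P_1\otimes\cdots\otimes P_n\in\Pc_{n,p}(q,\sigma,B)$ and form the mixture $\bar P=\frac1n\sum_{i=1}^nP_i$. Mixing is linear in the law, so $\bar P$ is centered. It also satisfies $\max_j\Eb_{\bar P}X(j)^2=V(P^n)\le\sigma^2$ and $\Eb_{\bar P}\max_j|X(j)|^q=\Mc_q(P^n)\le B^q$. Hence $\bar P\in\Pc_{1,p}(q,\sigma,B)$, and it suffices to show
\begin{equation*}
\Eb\Big\|\sum_{i=1}^n X_i\Big\|_{(k),2}\le 16\,\Eb\Big\|\sum_{i=1}^n Y_i\Big\|_{(k),2},
\end{equation*}
where $Y_1,\ldots,Y_n$ are iid with law $\bar P$. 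The only property of $\norm{\cdot}_{(k),2}$ used below is that it is a norm, so it is convex.

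The plan is a symmetrization sandwich, with the comparison between the two samples done on the symmetrized sums. The standard symmetrization inequalities for independent centered vectors in a normed space give
\begin{equation*}
\Eb\norm{\textstyle\sum_i X_i}\le 2\,\Eb\norm{\textstyle\sum_i\varepsilon_iX_i}
\quad\text{and}\quad
\Eb\norm{\textstyle\sum_i\varepsilon_iY_i}\le 2\,\Eb\norm{\textstyle\sum_i Y_i},
\end{equation*}
with independent Rademacher signs $\varepsilon_i$.

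To realize the $Y_i$, draw iid indices $I_1,\ldots,I_n$ uniform on $[n]$ and an array of independent copies $X_j^{(l)}\sim P_j$, and set $Y_i=X_{I_i}^{(l_i)}$, using a fresh copy each time an index repeats. With $N_j=\#\{i:I_i=j\}$, this gives the representation
\begin{equation*}
\sum_i\varepsilon_iY_i\overset{d}{=}\sum_{j=1}^n\sum_{l=1}^{N_j}\varepsilon_{j,l}X_j^{(l)},
\end{equation*}
where $(N_j)$ is multinomial and independent of the copies and the signs.

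Two applications of Jensen's inequality then lower bound the right-hand side; this comparison is the key step.

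First, condition on $(N_j)$, on the first copies $X_j^{(1)}$ and on the signs $\varepsilon_{j,1}$. All the remaining terms ($l\ge2$) are independent and mean zero. Conditional Jensen therefore lets us discard them:
\begin{equation*}
\Eb\norm{\textstyle\sum_i\varepsilon_iY_i}\ge\Eb\norm{\textstyle\sum_j\one\{N_j\ge1\}\varepsilon_jX_j}.
\end{equation*}

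Second, the vector $(\one\{N_j\ge1\})_j$ is independent of $(X_j,\varepsilon_j)_j$, although its coordinates are dependent among themselves. Applying Jensen conditionally on $(X,\varepsilon)$ and averaging over the indicators only, we get
\begin{equation*}
\Eb\norm{\textstyle\sum_j\one\{N_j\ge1\}\varepsilon_jX_j}\ge\Eb\norm{\textstyle\sum_j\Pb(N_j\ge1)\varepsilon_jX_j}=c_n\,\Eb\norm{\textstyle\sum_j\varepsilon_jX_j},
\end{equation*}
where $c_n=1-(1-1/n)^n\ge1-e^{-1}$.

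Chaining the three displays gives
\begin{equation*}
\Eb\norm{\textstyle\sum X_i}\le\frac{4}{1-e^{-1}}\,\Eb\norm{\textstyle\sum Y_i}\le 7\,\Eb\norm{\textstyle\sum Y_i}.
\end{equation*}
This is comfortably within the factor $16$. The factor $\frac1n$ is common to both sides, and taking the supremum over $P^n$ finishes the proof.

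The points needing care are that the multinomial representation is exact in distribution and that each Jensen step conditions on a $\sigma$-field with respect to which the discarded terms are independent. In particular, the dependence among the $N_j$ causes no trouble, because only the expectation over the indicator vector as a whole is taken.
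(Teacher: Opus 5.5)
Your proof is correct. Your lower bound is the same as the paper's.

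For the upper bound, the paper gives no argument of its own. It only asserts that the proof of Theorem~2.1 of \citet{basu2025maximalinequalitiesindependentrandom} goes through with $\norm{\cdot}_{(k),2}$ in place of $\norm{\cdot}_\infty$. Your argument is a self-contained replacement that makes this transfer transparent, and it lowers the constant from $16$ to $4/(1-e^{-1})<7$:
\begin{itemize}
\item The averaged constraints defining $\Pc_{n,p}(q,\sigma,B)$ are exactly what make the mixture $\bar P=n^{-1}\sum_iP_i$ admissible for the iid class.
\item The comparison of $\sum_iX_i$ with $\sum_iY_i$ (the multinomial representation plus two conditional Jensen steps) uses nothing about $\norm{\cdot}_{(k),2}$ beyond its being a norm.
\end{itemize}

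You can do better still, because neither symmetrization step is needed. Every $X_j^{(l)}$ is already centered, so your two Jensen steps apply verbatim to the unsigned identity $\sum_{i=1}^nY_i=\sum_{j=1}^n\sum_{l=1}^{N_j}X_j^{(l)}$:
\begin{itemize}
\item Conditioning on $(N_j)_j$ and $(X_j^{(1)})_j$ removes the terms with $l\ge2$, since each of them has conditional mean $\one\{N_j\ge l\}\Eb X_j^{(l)}=0$.
\item Averaging over the indicator vector $(\one\{N_j\ge1\})_j$ alone then gives the factor $c_n=1-(1-1/n)^n$.
\end{itemize}
Hence $\Eb\norm{\sum_iX_i}_{(k),2}\le c_n^{-1}\,\Eb\norm{\sum_iY_i}_{(k),2}$, and so $\Ec_{n,p,k,q}(\sigma,B)\le\frac{e}{e-1}\,\Ec^*_{n,p,k,q}(\sigma,B)$. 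This removes the factor $4$ that the symmetrization and desymmetrization inequalities cost you.
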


The proof of Proposition~\ref{prop:iid_reduction_sparse} follows the argument in the proof of Theorem~2.1 of \cite{basu2025maximalinequalitiesindependentrandom}.

\medskip
For the characterization of $\Ec_{n,p,k,q}$ and $\Ec^*_{n,p,k,q}$, set
\begin{equation*}
    \Lambda_q(k)
    =
    {\frac{B^2}{(\sigma\wedge B)^2}}
    \left({\frac{\log(2p/k)}{n}}\right)^{1-2/q}.
\end{equation*} 

We first state the bounds for $k=1$.

\begin{theorem}\label{thm:maximal_upper}
For all $n\ge1$, $p\ge1$, $q\ge2$, and $\sigma,B>0$,
\begin{align}\label{eq:maximal_upper_small}
    \Ec_{n,p,1,q}(\sigma,B)
    \le
    \min\bigg\{B,~&
    9(\sigma\wedge B)\sqrt{\frac{\log(2p)}{n}}\mathbf{1}(\log\Lambda_q(1)\le1) \nonumber\\
    &+ 8B\left({\frac{\log(2p)}{n\log\Lambda_q(1)}}\right)^{1-1/q} \mathbf{1}(\log\Lambda_q(1)>1)\bigg\}.
\end{align}
\end{theorem}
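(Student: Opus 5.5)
Throughout write $\sigma$ for $\sigma\wedge B$, which is allowed since $\Ec_{n,p,1,q}(\sigma,B)=\Ec_{n,p,1,q}(\sigma\wedge B,B)$. Set $S_n=\frac1n\sum_i X_i$ and $L=\log(2p)$.

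\textbf{The trivial bound $B$.} By Jensen and the triangle inequality,
\[
\Eb\max_j|S_n(j)|\le\frac1n\sum_i\Eb\max_j|X_i(j)|\le\Big(\frac1n\sum_i\Eb\max_j|X_i(j)|^q\Big)^{1/q}\le B.
\]
It therefore suffices to prove the second term of the minimum.

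\textbf{Truncation.} Fix a level $u>0$ and decompose $X_i=Y_i+Z_i$, where
\[
Y_i=X_i\one\{\max_j|X_i(j)|\le u\}-\Eb\big[X_i\one\{\max_j|X_i(j)|\le u\}\big],
\]
so that $Z_i$ is the centered remainder.

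\emph{Bounded part.} The $Y_i$ are centered, satisfy $\max_j|Y_i(j)|\le 2u$, and have coordinate variances averaging at most $\sigma^2$. Bernstein's inequality combined with a union bound over the $2p$ signed coordinates (integrated tail form) gives
\[
\Eb\max_j\Big|\frac1n\sum_i Y_i(j)\Big|\le c_1\sigma\sqrt{L/n}+c_2\,uL/n.
\]

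\emph{Tail part.} By the triangle inequality and Markov's inequality on $\max_j|X_i(j)|^q$,
\[
\Eb\max_j\Big|\frac1n\sum_iZ_i(j)\Big|\le\frac2n\sum_i\Eb\big[\max_j|X_i(j)|\one\{\max_j|X_i(j)|>u\}\big]\le 2B^qu^{1-q}.
\]

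\textbf{Choice of $u$.} The total bound is
\[
c_1\sigma\sqrt{L/n}+c_2uL/n+2B^qu^{1-q}.
\]
Optimizing $uL/n+B^qu^{1-q}$ over $u$ gives $u\asymp B(n/L)^{1/q}$ and value $\asymp B(L/n)^{1-1/q}$. This is not yet the claimed rate, which contains the factor $(\log\Lambda)^{-(1-1/q)}$. In the regime $\log\Lambda\le1$ we have $B(L/n)^{1/2-1/q}\le e^{1/2}\sigma$, so $B(L/n)^{1-1/q}\lesssim\sigma\sqrt{L/n}$. This gives the first branch, with constant $9$ after careful bookkeeping.

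\textbf{The regime $\log\Lambda>1$ (the main obstacle).} Here the gain of $\log\Lambda$ requires replacing Bernstein by Bennett's inequality, whose tail is $\exp\{-\frac{n\sigma^2}{u^2}h(ut/\sigma^2)\}$ with $h(x)=(1+x)\log(1+x)-x\gtrsim x\log x$. With the union bound, the bounded part then behaves like $t\approx\frac{uL}{n\log(1+ut/\sigma^2)}$, i.e.\ $uL/(n\log\Lambda)$ when $ut/\sigma^2\approx\Lambda$. Balancing this against $B^qu^{1-q}$ gives
\[
u\asymp B\Big(\frac{n\log\Lambda}{L}\Big)^{1/q}
\]
and total order $B\big(L/(n\log\Lambda)\big)^{1-1/q}$. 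One then checks self-consistently that $ut/\sigma^2\asymp\Lambda/\log\Lambda$ up to powers, so that $\log(ut/\sigma^2)\gtrsim\log\Lambda$ when $\log\Lambda>1$. The Gaussian term $\sigma\sqrt{L/n}$ is dominated by this term in this regime, because $\log\Lambda>1$ forces $\sigma\sqrt{L/n}\le B(L/n)^{1-1/q}\cdot\Lambda^{-1/2}$ and $\Lambda^{-1/2}\le(\log\Lambda)^{-(1-1/q)}$. The delicate step is inverting Bennett's function $h$ with explicit constants and integrating the tail to get the expectation bound. I would handle this with the inequality $h^{-1}(y)\le 2y/\log(1+y)$ for $y\ge e$, followed by a direct computation that tracks the constants $8$ and $9$.
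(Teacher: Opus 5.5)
Your outline is the paper's proof of Proposition~\ref{prop:sup_upper}, from which the theorem is read off. The shared ingredients are:
\begin{itemize}
\item the trivial bound $B$;
\item truncation of the envelope $M_i=\max_j|X_i(j)|$;
\item a Bernstein/Bennett bound with a union bound over the $2p$ signed coordinates for the bounded part;
\item the estimate $2B^qu^{1-q}$ for the tail part;
\item the levels $u=B(n/L)^{1/q}$ and $u=B(n\Delta/L)^{1/q}$, with $\Delta=\log\Lambda_q(1)$.
\end{itemize}
The first branch does close with the stated constant. Integrating the tail bound $\sigma\sqrt{2x/n}+2ux/(3n)$ gives $\tfrac52\sigma\sqrt{L/n}+\tfrac53B(L/n)^{1-1/q}$. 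Adding the tail part and using $\Lambda_q(1)\le e$ then gives $\tfrac52+\tfrac{11}{3}\sqrt e<9$.

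The step that does not deliver the theorem as stated is the one you defer. The inversion $h^{-1}(y)\le 2y/\log(1+y)$ is too crude for the constant $8$. Since $h(x)\sim x\log x$, it loses a factor tending to $2$ exactly as $\Lambda_q(1)\to\infty$. With $K=2u$, it bounds the level exceeded with probability $e^{-x}$ by $2K(L+x)/\{n\log(1+y)\}\le 4B\{L/(n\Delta)\}^{1-1/q}(1+x/L)$. Integrating over $x$ gives the factor $4(1+1/L)$. For $p=1$ and $\Delta\to\infty$ the bound $\log(1+y)\ge\Delta$ is asymptotically exact, so this route ends near $4(1+1/\log 2)+2\approx 11.8$, not $8$.

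The paper instead keeps $h(x)\ge x\{\log(1+x)-1\}$ and evaluates Bennett at $t=3vB\{L/(n\Delta)\}^{1-1/q}$, $v\ge1$. Then $Kt/\sigma^2=6ve^{\Delta}\Delta^{2/q-1}$ and $nt/K=3vL/(2\Delta)$. Moreover, $\log(1+6ve^{\Delta}\Delta^{2/q-1})-1\ge\Delta-\log\Delta+\log 6-1\ge 5\Delta/6$, because $x/6-\log x+\log 6-1\ge0$ for $x\ge1$. So the Bennett exponent is at least $5vL/4$, and the union bound gives probability at most $\exp\{-(5v/4-1)L\}$. Integrating over $v\ge1$ yields at most $3\{1+\tfrac{4}{5L}e^{-L/4}\}B\{L/(n\Delta)\}^{1-1/q}\le 6B\{L/(n\Delta)\}^{1-1/q}$. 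The tail part adds $2B\{L/(n\Delta)\}^{1-1/q}$, which gives the $8$.

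For the rate alone your inversion suffices; for the stated constant you need this sharper form.
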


\begin{remark}[Comparison with Theorem 4.3 of \citet{basu2025maximalinequalitiesindependentrandom}]Theorem~4.3 of \citet{basu2025maximalinequalitiesindependentrandom} implies, up to universal constants,
\begin{align}
    \Ec_{n,p,1,q}(\sigma,B)
    \lesssim
    \min\Bigg\{
    B,\,
    &(\sigma\wedge B)\sqrt{\frac{\log(2p)}{n}}
    \mathbf 1\left(\log \Lambda_q(1)\leq\frac{q-2}{q}\right)\nonumber\\
    &+B\left(\frac{\log(2p)}{n\log \Lambda_q(1)}\right)^{1-1/q}
    \mathbf 1\left(\log \Lambda_q(1)>\frac{q-2}{q}\right)
    \Bigg\}.\label{eq:BK}
\end{align}
Thus the two bounds in \eqref{eq:maximal_upper_small} and \eqref{eq:BK} agree when $\log \Lambda_q(1)\le (q-2)/q$ or $\log \Lambda_q(1)>1$.  In the intermediate range $(q-2)/q<\log \Lambda_q(1)\le1$, one has
\begin{equation*}
    \frac{
    B\left(\frac{\log(2p)}{n\log \Lambda_q(1)}\right)^{1-1/q}
    }{
    (\sigma\wedge B)\sqrt{\frac{\log(2p)}{n}}
    }
    =
    \frac{\exp(\log \Lambda_q(1)/2)}{\log \Lambda_q(1)^{1-1/q}}
    \geq 1.
\end{equation*}
Hence, Theorem~\ref{thm:maximal_upper} keeps the smaller variance term in this range. This improvement over Theorem~4.3 of \citet{basu2025maximalinequalitiesindependentrandom} is more pronounced as $q\downarrow2$.
\end{remark}

\begin{theorem}\label{thm:maximal_lower}
For all $n\ge1$, $p\ge1$, $q\ge2$, and $\sigma,B>0$, if $\log\Lambda_q(1)\le1$, then
\begin{equation}\label{eq:maximal_lower_small}
    \Ec^*_{n,p,1,q}(\sigma,B)
    \ge
    \frac{1}{\sqrt{\log 4}}
    \min\left\{B,
    (\sigma\wedge B)\sqrt{\frac{\log(2p)}{n}}\right\}.
\end{equation}
If $\log\Lambda_q(1)>1$ and $\log(2p)\ge \log\Lambda_q(1)$, then
\begin{equation}\label{eq:maximal_lower_large}
    \Ec^*_{n,p,1,q}(\sigma,B)
    \ge
    \frac{1}{640}
    \min\left\{B,
    B\left({\frac{\log(2p)}{n\log\Lambda_q(1)}}\right)^{1-1/q}\right\}.
\end{equation}
\end{theorem}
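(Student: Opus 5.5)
Both lower bounds will come from explicit iid constructions. For each we compute or lower-bound $\Eb\max_j|\bar X(j)|$, with $\bar X=n^{-1}\sum_{i=1}^n X_i$, and check the two constraints $\max_j\Eb X(j)^2\le\sigma^2$ and $\Eb\max_j|X(j)|^q\le B^q$. Write $s=\sigma\wedge B$ and $L=\log\Lambda_q(1)$. Throughout we may replace $p$ by a power of two $p'\le p$ with $\log(2p')\ge\frac12\log(2p)$, which costs only constants.

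\emph{Small regime, $L\le1$.} Take $X=s\,\varepsilon$ with $\varepsilon$ uniform on $\{\pm e_1,\dots,\pm e_p\}$, or more simply a vector of iid Rademacher signs scaled by $s$. The constraints hold because every coordinate has modulus at most $s\le B$ and variance $s^2$. A single-point Rademacher design has no useful maximum, though, so I will use the following two-case argument.

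If $\log(2p)\ge n$, the target is essentially $B$ or $s\sqrt{\log(2p)/n}$, whichever is smaller. Take $X$ with iid Rademacher coordinates scaled by $s$. With $2^n\le 2p$, some coordinate has all $n$ signs equal with constant probability, so $\Eb\max_j|\bar X(j)|\gtrsim s$. Since $L\le1$ gives $B^2\le e\,s^2(n/\log 2p)^{1-2/q}\le e\,s^2$, we have $s\gtrsim B$, which handles this case.

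If $\log(2p)<n$, use the standard lower tail bound for Rademacher sums: $\Pb(\bar\varepsilon(j)\ge t)\ge c\,e^{-nt^2/c'}$. For independent coordinates, $\Eb\max_j\bar X(j)\gtrsim s\sqrt{\log(2p)/n}$. Tracking constants to reach $1/\sqrt{\log4}$ will likely go through a direct binomial argument, but the order is the point. I expect this part to be routine.

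\emph{Large regime, $L>1$ and $\log(2p)\ge L$.} This is the main construction and the main obstacle. The upper bound suggests the extremal law is sparse spikes. Let $\delta\in(0,1]$ and $X=a\,\xi\,\eta\,e_J$, where $J$ is uniform on $[p]$, $\eta\sim\mathrm{Bernoulli}(\delta)$, and $\xi$ is a Rademacher sign. Then:
\begin{itemize}
\item the coordinate variance is $a^2\delta/p\le s^2$;
\item the envelope moment is $a^q\delta\le B^q$.
\end{itemize}
Choose $a^q\delta=B^q$ and $a^2\delta/p\le s^2$.

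The aim is that some coordinate receives about $m$ spikes of the same sign, with $m\asymp\log(2p)/L$, giving $\max_j|\bar X(j)|\gtrsim am/n$. The number of spikes landing on a fixed coordinate is roughly Poisson with mean $\mu=n\delta/p$. We want $p\cdot\mu^m/m!\gtrsim1$, that is $m\log(m/(e\mu))\approx\log(2p)$, where all $m$ spikes carry the same sign, costing an extra factor $2^{-m}$. Set $\mu$ so that $\log(1/\mu)\asymp L$; then $m\asymp\log(2p)/L\ge1$, which is where the hypothesis $\log(2p)\ge L$ enters.

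Now we need $a m/n\asymp B(\log(2p)/(nL))^{1-1/q}$. Taking $\delta=\log(2p)/(nL)\wedge1$ and $a=B\delta^{-1/q}$ gives $a m/n=B\delta^{1-1/q}$, which is the target.

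The obstacle is consistency. The variance constraint reads $B^2\delta^{1-2/q}\le p\,s^2$. Then $\mu=n\delta/p$ must satisfy $\log(1/\mu)\gtrsim L$, i.e.\ $p\gtrsim n\delta\,\Lambda$. Using $\Lambda=(B/s)^2(\log(2p)/n)^{1-2/q}$, both conditions reduce to comparisons between $p$ and powers of $\Lambda$ and $n$, which may fail when $p$ is small. If they fail, I would instead spread the spikes over all coordinates simultaneously: use independent sparse coordinates $X(j)=a\xi_j\eta_j$ with $\Pb(\eta_j=1)=\delta/p$, so the envelope is controlled by a union bound. Alternatively I would adjust $\delta$, accepting losses absorbed by the constant $1/640$ and the $\min\{B,\cdot\}$ cap. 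The cap is handled by $\delta=1$.

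Finally, the Poisson/binomial lower bound for $\Pb(\exists j:\ \text{at least } m \text{ same-sign spikes})$ follows from a second-moment or independence argument across coordinates. Centering is automatic by symmetry, and the contribution of opposite-sign spikes is handled by conditioning on signs.
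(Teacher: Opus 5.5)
\textbf{Large regime.} This part of your plan has a genuine gap, and it sits exactly at your ``consistency'' step.

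In $X=a\xi\eta e_J$ each observation activates at most one coordinate. So for fixed $j$, the number $C_j$ of nonzero $X_i(j)$ is $\mathrm{Bin}(n,\delta/p)$ with mean $\mu=n\delta/p=\log(2p)/(pL)$ when $\delta<1$. Hence $\log(1/\mu)\approx\log p$. But $p(\mu/2)^m/m!\gtrsim 1$ with $m\asymp\log(2p)/L$ requires $\log(1/\mu)\lesssim L$; you wrote the reverse inequality. That condition holds only when $\log(2p)\asymp L$, so it fails for large $p$, not small $p$.

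In general, $\Pb(\max_jC_j\ge m)\le p\mu^m/m!$ gives $\Eb\max_jC_j=O(1)$. Therefore $\Eb\max_j|\bar X(j)|\lesssim a/n=B\delta^{-1/q}/n$. This falls short of the target $B\delta^{1-1/q}$ by the factor $n\delta=\log(2p)/L$, which is unbounded under $\log(2p)\ge L$.

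Your fallback does not escape this. Independent coordinates at rate $\delta/p$ have the same per-coordinate law. Retuning $\delta$ does not help either. With disjoint or independent activations inside a row, the envelope satisfies $\Eb\max_j|X(j)|^q\approx a^q\min\{1,p\cdot\mathrm{rate}\}$. This forces rate $\lesssim\delta/p$ at amplitude $a=B\delta^{-1/q}$. Rates $\gtrsim 1/p$ instead force $a\lesssim B$, and then Bernstein caps the expectation at order $(\sigma\wedge B)\sqrt{\log(2p)/n}+B\log(2p)/n$. For instance, with $q=2$, $\log(2p)=L^2$, $n=L^4$ and $L\to\infty$, these designs lose a factor of order $L^{-1/2}$ either way.

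Nor is the cap handled by $\delta=1$. When $\log(2p)\ge nL$, the counts are $\mathrm{Bin}(n,1/p)$, again with $\Eb\max_jC_j=O(1)$. This gives order $B/n$ rather than $B$.

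\textbf{The missing idea.} Share the activation across all coordinates of a row. This costs nothing in the envelope, because the envelope only sees $\max_j|X_i(j)|$. The paper takes $X_i(j)=B\delta^{-1/q}R_i(Y_{ij}-\rho)$, where $R_i\sim\mathrm{Bernoulli}(\delta)$ with $\delta=\log(2p)/(nL)$, and $Y_{ij}$ are iid $\mathrm{Bernoulli}(\rho)$ with $\rho\asymp L^{1-2/q}/\Lambda_q(1)$.

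Then:
\begin{itemize}
\item $\Eb\max_j|X_i(j)|^q\le B^q$ for every $\rho$ and $p$.
\item The variance is at most $(\sigma\wedge B)^2$, by the definition of $\Lambda_q(1)$.
\item Given $N=\sum_iR_i\asymp\log(2p)/L$, the counts $Z_j=\sum_iR_iY_{ij}$ are iid $\mathrm{Bin}(N,\rho)$ with $\log(1/\rho)\le L+\log 2$.
\item Hence $p\rho^r\gtrsim 1$ for some $r\asymp N$, and $\max_j\bar X(j)\gtrsim B\delta^{-1/q}N/n\asymp B\delta^{1-1/q}$.
\end{itemize}
The hypothesis $\log(2p)\ge L$ enters as $\Eb N\ge 1$. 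The cap case $\log(2p)/n\ge L$ uses the dense design $B(Y_{ij}-\rho)$ with $\rho=\min\{1/4,(\sigma\wedge B)^2/B^2\}$, together with a coordinate whose $Y_{ij}$ all equal one.

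\textbf{Constants in the small regime.} The theorem states explicit constants, and your plan only aims at orders; the power-of-two reduction of $p$ already loses constants.

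In the subcase $\log(2p)\ge n$ this is not cosmetic. The design $s\varepsilon_{ij}$ has $\max_j|\bar X(j)|\le s$. The target $\min\{B,s\sqrt{\log(2p)/n}\}/\sqrt{\log 4}$ can equal $B/\sqrt{\log 4}>s$ while $L\le 1$, e.g.\ $q=2$, $B=1.5s$, $\log(2p)\ge 2.25n$. So no argument based on that design can reach the stated constant. The paper instead uses the sparse design $AR_{ij}\varepsilon_{ij}$ with $A=\min\{B,s\sqrt{\log(2p)/n}\}$ and $R_{ij}\sim\mathrm{Bernoulli}(s^2/A^2)$. It shows $\Eb\max_j|\bar X(j)|\ge c_0A$ with $c_0>0.9>1/\sqrt{\log 4}$.

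For $\log(2p)<n$, the constant $1/\sqrt{\log 4}=1/\sqrt{2\log 2}$ is the sharp Rademacher-maximum constant. The paper imports it from Theorem~2.1 of \citet{chang2026notesconstantsmaximarademacher}; it is not a routine binomial estimate.
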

If $B\geq (\sigma\wedge B)\sqrt{\log(2p)/n}$, then the lower bound in \eqref{eq:maximal_lower_small} is attained via the random vector of independent Rademacher coordinates. In such a case, the numerical constant $1/\sqrt{\log 4}$ is sharp in a sense that
\begin{equation*}
    \inf_{n,p\geq 1}\frac{\Eb\max_{1\leq j\leq p}\Abs{n^{-1}\sum_{i=1}^n\epsilon_{ij}}}{\min\Set{1,\sqrt{\log(2p)/n}}} = 1/\sqrt{\log 4},
\end{equation*} for independent Rademacher $\epsilon_{ij}$'s \citep{chang2026notesconstantsmaximarademacher}. If instead $B< (\sigma\wedge B)\sqrt{\log(2p)/n}$, the random vector of independent coordinates, each of which is iid and symmetrically supported on $\{-1,0,1\}$ with properly assigned weights, attain the lower bound in \eqref{eq:maximal_lower_small}. Under the stated assumption for \eqref{eq:maximal_lower_large}, when $B\geq B(\log(2p)/(n\log \Lambda_q(1)))^{1-1/q}$, \eqref{eq:maximal_lower_large} is attained by the asymmetric three-point distributions supported on $\{-\rho,0,1-\rho\}$ for some $\rho\in[0,1/2]$, otherwise, again by symmetric three-point distribution supported on $\{-1,0,1\}$. See Appendix~\ref{app:proofs-moment} for a detailed construction.

The condition $\log(2p)\ge\log\Lambda_q(1)$ for $\eqref{eq:maximal_lower_large}$ cannot be replaced by $\log(2p)\ge c\log\Lambda_q(1)$ for a fixed $c<1$ without changing the conclusion.

\begin{proposition}\label{prop:sharpness}
Fix $q\ge2$ and $c\in(0,1)$. There is no positive constant $C=C(c,q)$ such that, for all $n,p\ge1$ and $\sigma,B>0$, the conditions $\log\Lambda_q(1)>1$ and $\log(2p)\ge c\log\Lambda_q(1)$ imply
\begin{equation*}
    \Ec^*_{n,p,1,q}(\sigma,B)
    \ge
    C\min\left\{B,
    B\left({\frac{\log(2p)}{n\log\Lambda_q(1)}}\right)^{1-1/q}\right\}.
\end{equation*}
\end{proposition}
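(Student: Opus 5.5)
The plan is to exhibit, for each fixed $c\in(0,1)$, a sequence of parameters satisfying both hypotheses along which $\Ec^*_{n,p,1,q}(\sigma,B)$ goes to zero while the proposed lower bound stays bounded below by a positive constant. For the upper bound on $\Ec^*$ I will not use Theorem~\ref{thm:maximal_upper}. In the regime I have in mind it has the same order as the target, since its large-deviation term is exactly $B(\log(2p)/(n\log\Lambda_q(1)))^{1-1/q}$. Instead I use a cruder bound that wins when $p$ is only polynomially large in $\Lambda_q(1)$. It is the $\ell_2$ bound
\begin{equation*}
    \Eb\max_{1\le j\le p}\Abs{\frac1n\sum_{i=1}^n X_i(j)}
    \le
    \left(\sum_{j=1}^p\Eb\Big(\frac1n\sum_{i=1}^nX_i(j)\Big)^2\right)^{1/2}
    \le (\sigma\wedge B)\sqrt{\frac{p}{n}},
\end{equation*}
which follows from $\max_j|y_j|\le\norm{y}_2$, Jensen's inequality, independence and centering, and $V\le(\sigma\wedge B)^2$. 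It holds for every $P\in\Pc_{1,p}(q,\sigma,B)$.

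Next I choose parameters. Take $n=1$ and $B=1$, and let $p\to\infty$. Define $\Lambda:=(2p)^{1/c}$ and choose $\sigma<1$ so that $\Lambda_q(1)=\Lambda$, that is,
\begin{equation*}
    \sigma^2=(\log(2p))^{1-2/q}\,(2p)^{-1/c}.
\end{equation*}
For large $p$ this gives $\sigma<1$. Then $\log\Lambda_q(1)=c^{-1}\log(2p)>1$ for large $p$, and $\log(2p)=c\log\Lambda_q(1)$, so both hypotheses hold. The target equals
\begin{equation*}
    \min\Set{1,\;(\log(2p)/\log\Lambda_q(1))^{1-1/q}}=c^{1-1/q},
\end{equation*}
which is a fixed positive number.

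Finally I compare the two quantities. The upper bound gives
\begin{equation*}
    \Ec^*_{1,p,1,q}(\sigma,1)\le\sigma\sqrt p
    = 2^{-1/2}(\log(2p))^{1/2-1/q}(2p)^{\frac12-\frac1{2c}}.
\end{equation*}
Since $c<1$, the exponent $\tfrac12-\tfrac1{2c}$ is negative, so this tends to $0$ as $p\to\infty$. Hence, for any candidate $C>0$, the inequality $\Ec^*\ge C c^{1-1/q}$ fails for $p$ large. This contradicts the existence of $C(c,q)$.

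The only delicate step is choosing a scaling in which the variance-driven $\sqrt{p}$ bound beats the heavy-tail term. Heuristically, with a moderate number of coordinates, a single jump of size $t$ is capped both by $t^q\pi\le B^q$ and by $t^2\pi\le\sigma^2$. When $p\approx\Lambda^c$, the variance cap binds, and that is exactly what the computation above exploits. If one prefers general $n$, the same calculation with $2p=\Lambda^c$ and arbitrary $n$ gives the ratio
\begin{equation*}
    \Lambda^{(c-1)/2}\,\mathrm{polylog}(\Lambda)\to0,
\end{equation*}
but $n=1$ suffices for the stated proposition.
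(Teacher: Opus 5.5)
Your proposal is correct and is essentially the paper's proof: you take $n=1$, $B=1$, use the crude $\ell_2$ bound $\Ec^*_{1,p,1,q}(\sigma,1)\le\sigma\sqrt{p}$, and tune $\sigma$ so that $\Lambda_q(1)$ grows like a power of $2p$ strictly larger than one. The only difference is that you set $\Lambda_q(1)=(2p)^{1/c}$, so the hypothesis $\log(2p)\ge c\log\Lambda_q(1)$ holds with equality. The paper instead sets $\Lambda_q(1)=e^r$ and picks integers $p_r$ with $\log(2p_r)/r\to c_1\in(c,1)$; your parametrization is slightly tidier and avoids the auxiliary $c_1$.
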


We next pass from $k=1$ to general $k\in[p]$. The upper bound follows from Lemma~\ref{lem:order}, which gives
\begin{equation*}
    \Ec_{n,p,k,q}(\sigma,B)\lesssim \sqrt{k}\,\Ec_{n,\lceil p /k \rceil,1,q}(\sigma,B),
\end{equation*}
using the random partitioning argument. Here and below, $\lesssim$ denotes inequality up to a universal constant.

\begin{corollary}\label{cor:upper}
For all $n\ge1$, $p\ge1$, $k\in[p]$, $q\ge2$, and $\sigma,B>0$, there is a universal constant $C>0$ such that 
\begin{align*}
    \Ec_{n,p,k,q}(\sigma,B)
    \le
    \sqrt{k}\min\bigg\{B,~&
    C(\sigma\wedge B)\sqrt{\frac{\log(2p/k)}{n}}\mathbf{1}(\log\Lambda_q(k)\le1) \nonumber\\
    &+ CB\left({\frac{\log(2p/k)}{n\log\Lambda_q(k)}}\right)^{1-1/q} \mathbf{1}(\log\Lambda_q(k)>1)\bigg\}.
\end{align*}
\end{corollary}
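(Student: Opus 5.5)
The plan is to combine Lemma~\ref{lem:order}, which gives $\Ec_{n,p,k,q}(\sigma,B)\le C_0\sqrt{k}\,\Ec_{n,m,1,q}(\sigma,B)$ with $m=\lceil p/k\rceil$, with the $k=1$ bound of Theorem~\ref{thm:maximal_upper} applied in dimension $m$. The only work is to translate the quantities $\log(2m)$ and $\Lambda_q(1)$ computed in dimension $m$ into the quantities $\log(2p/k)$ and $\Lambda_q(k)$ that appear in the corollary. The extra $\sqrt k$ in front of the trivial term $B$ comes directly from Lemma~\ref{lem:order} and Theorem~\ref{thm:maximal_upper}. One can also see it directly: $\norm{x}_{(k),2}\le\sqrt k\max_j|x(j)|$, and $\Eb\max_j|\bar X(j)|\le(\Mc_q)^{1/q}\le B$ by convexity and Jensen.

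The first step is to compare $\log(2m)$ with $\log(2p/k)$. Since $p/k\le m\le p/k+1\le 2p/k$, we have $2p/k\le 2m\le 4p/k$. Because $2p/k\ge2$, this gives $\log(2p/k)\le\log(2m)\le\log(2p/k)+\log2\le 2\log(2p/k)$. Writing $\tilde\Lambda$ for $\Lambda_q(1)$ in dimension $m$, it follows that $\Lambda_q(k)\le\tilde\Lambda\le 2^{1-2/q}\Lambda_q(k)\le 2\Lambda_q(k)$. So the two logarithms differ by at most $\log 2$, and the two variance terms differ by at most a factor $\sqrt2$.

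The main obstacle is that the two indicators in the two bounds may not switch at the same point. I would split into cases.
\begin{itemize}
\item If $\log\tilde\Lambda\le1$, then $\log\Lambda_q(k)\le1$ as well. The bound from Theorem~\ref{thm:maximal_upper} is $9(\sigma\wedge B)\sqrt{\log(2m)/n}\le 9\sqrt2(\sigma\wedge B)\sqrt{\log(2p/k)/n}$, which is of the required form.
\item If $\log\tilde\Lambda>1$ and $\log\Lambda_q(k)>1$, then $\log\tilde\Lambda\ge\log\Lambda_q(k)$. The bound $8B(\log(2m)/(n\log\tilde\Lambda))^{1-1/q}$ is therefore at most $8\cdot2\,B(\log(2p/k)/(n\log\Lambda_q(k)))^{1-1/q}$.
\item The mismatched case is $\log\Lambda_q(k)\le1<\log\tilde\Lambda\le1+\log2$. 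Here I would bound the heavy-tail term by the variance term with $\log\tilde\Lambda\in(1,2]$. Using the identity in the Remark, the ratio of the heavy-tail term to $(\sigma\wedge B)\sqrt{\log(2m)/n}$ equals $\exp(\log\tilde\Lambda/2)/(\log\tilde\Lambda)^{1-1/q}\le e$. So the bound is at most $8e\sqrt2(\sigma\wedge B)\sqrt{\log(2p/k)/n}$, which again matches the corollary's first branch.
\end{itemize}

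Finally, take $C=C_0\max\{9\sqrt2,16,8e\sqrt2\}$ and combine with the $\sqrt{k}B$ bound by taking the minimum. This yields the stated inequality.
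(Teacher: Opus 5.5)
Your dimension comparison ($\log(2p/k)\le\log(2\lceil p/k\rceil)\le 2\log(2p/k)$, hence $\Lambda_q(k)\le\tilde\Lambda\le 2\Lambda_q(k)$) is correct, and so is your three-case handling of the indicators at the single dimension $\lceil p/k\rceil$. The gap is the first step. Lemma~\ref{lem:order} does not give $\Ec_{n,p,k,q}(\sigma,B)\le C_0\sqrt{k}\,\Ec_{n,\lceil p/k\rceil,1,q}(\sigma,B)$. After conditioning on the random subsets, what it gives is the dyadic bound
\begin{equation*}
\Eb\norm{S_n}_{(k),2}\le(1-e^{-1})^{-1}\sum_{s=0}^{S}\sqrt{m_s}\,\Ec_{n,\lceil p/m_s\rceil,1,q}(\sigma,B),\qquad m_s=2^s\wedge k .
\end{equation*}
Here the dimensions run from $\lceil p/k\rceil$ all the way up to $p$.

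The lemma's mechanism is pointwise in $x$, and a single scale fails pointwise. Take $x=e_1$, $k\mid p$, and $|J|=p/k$. Then $\norm{x}_{(k),2}=1$, but $\sqrt{k}\,\Eb_J\max_{j\in J}|x(j)|=1/\sqrt{k}$. The obvious soft comparison between worst-case values in different dimensions also fails to collapse the sum. Splitting $\lceil p/m_s\rceil$ coordinates into blocks of size $\lceil p/k\rceil$ costs a factor of about $k/m_s$, which yields $k$ rather than $\sqrt{k}$. The sentence in Section~2 asserting the single-scale comparison is shorthand; the appendix only compares explicit upper bounds. The collapse has to be done on those explicit bounds, scale by scale. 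It uses the geometric decay of $\sqrt{m_s}$ against the growth $\log(2p/m_s)\le(S-s+1)\log(2p/k)$, which is exactly Lemma~\ref{lem:sum}.

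Once the sum is in place, your case split no longer works as written, because the indicator mismatch is not confined to a window of width $\log 2$. At scale $s$ the relevant quantity is $\tilde\Lambda_s=\Lambda_q(k)\{\log(2\lceil p/m_s\rceil)/\log(2p/k)\}^{1-2/q}$. For small $s$ this can exceed $\Lambda_q(k)$ by an unbounded factor: with $k=p$, $s=0$ and $q>2$, the factor is $\{\log(2p)/\log 2\}^{1-2/q}$. So when $\log\Lambda_q(k)\le 1$, some scales may have $\log\tilde\Lambda_s\gg 1$. At those scales you cannot convert the heavy-tail term into that scale's own variance term via $e^{L/2}/L^{1-1/q}\le e$.

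The repair, which is the paper's route, has two parts.
\begin{itemize}
\item Case $\log\Lambda_q(k)\le1$. At every scale, use an indicator-free bound. Either \eqref{eq:sup_upper_general} works, or note that Theorem~\ref{thm:maximal_upper} always yields $9(\sigma\wedge B)\sqrt{\log(2m)/n}+8B(\log(2m)/n)^{1-1/q}$, since in its second branch $\log\tilde\Lambda_s>1$. Sum these with \eqref{eq:sum_log} for $\beta=1/2$ and $\beta=1-1/q$. Only at the end convert, using $\sqrt{k}B(\log(2p/k)/n)^{1-1/q}=\Lambda_q(k)^{1/2}\sqrt{k}(\sigma\wedge B)\sqrt{\log(2p/k)/n}\le\sqrt{e}\,\sqrt{k}(\sigma\wedge B)\sqrt{\log(2p/k)/n}$.
\item Case $\log\Lambda_q(k)>1$. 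Since $m_s\le k$ and $1-2/q\ge0$, monotonicity gives $\log\tilde\Lambda_s\ge\log\Lambda_q(k)>1$ at every scale, so \eqref{eq:sup_upper_log} applies throughout. Bound each $\log\tilde\Lambda_s$ below by $\log\Lambda_q(k)$ and sum via \eqref{eq:sum_log_delta}.
\end{itemize}
With these changes your argument goes through.
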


The lower bound follows from a simple argument. If the law of $\lfloor p/k\rfloor$-dimensional random vector $X^\circ$ belongs to $\Pc_{n,\lfloor p/k\rfloor}(q,\sigma,B)$ then the law of the following $p$-dimensional random vector belongs to $\Pc_{n,p}(q,\sigma,B)$: 
\begin{equation*}
X((r-1)\lceil p /k \rceil+\ell)=X^\circ(\ell),
\qquad
1\le r\le \floor{p/\lceil p /k \rceil},\quad 1\le \ell\le \lceil p /k \rceil,
\end{equation*}
and $X(j)=0$ for the remaining coordinates. This construction is used in Lemma~\ref{lem:block} to show that
\begin{equation*}
    \Ec_{n,p,k,q}(\sigma,B)\gtrsim \sqrt{k}\,\Ec_{n,\lfloor p /k \rfloor,1,q}(\sigma,B).
\end{equation*}

\begin{corollary}\label{cor:lower}
For all $n\ge1$, $p\ge1$, $k\in[p]$, $q\ge2$, and $\sigma,B>0$, there is a universal constant $c>0$ such that the following bounds hold.  If $\log\Lambda_q(k)\le1$, then
\begin{equation}\label{eq:main_lower_small}
    \Ec^*_{n,p,k,q}(\sigma,B)
    \ge
    c\sqrt{k}
    \min\left\{B,
    (\sigma\wedge B)\sqrt{\frac{\log(2p/k)}{n}}\right\}.
\end{equation}
If $\log\Lambda_q(k)>1$ and $\log(2p/k)\ge \log\Lambda_q(k)$, then
\begin{equation}\label{eq:main_lower_large}
    \Ec^*_{n,p,k,q}(\sigma,B)
    \ge
    c\sqrt{k}
    \min\left\{B,
    B\left({\frac{\log(2p/k)}{n\log\Lambda_q(k)}}\right)^{1-1/q}\right\}.
\end{equation}
\end{corollary}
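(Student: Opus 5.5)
Our plan is to derive Corollary~\ref{cor:lower} from Theorem~\ref{thm:maximal_lower} through the block-replication construction described just before the statement. We first prove the iid form of Lemma~\ref{lem:block}. Set $m=\lceil p/k\rceil$ and $R=\lfloor p/m\rfloor$. Take any law $P^\circ\in\Pc_{1,m}(q,\sigma,B)$ and let $X$ be the $p$-vector made of $R$ identical copies of $X^\circ$, padded with zeros. Then $\Eb X=0$, and the coordinatewise variances are unchanged. The envelope is also unchanged, since $\max_j|X(j)|=\max_\ell|X^\circ(\ell)|$. Hence $P\in\Pc_{1,p}(q,\sigma,B)$. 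The top-$k$ norm of the sample mean of $X$ is at least the Euclidean norm of $\min(R,k)$ copies of the largest coordinate of the mean of $X^\circ$, which gives
\begin{equation*}
\Ec^*_{n,p,k,q}(\sigma,B)\ge\sqrt{\min(R,k)}\,\Ec^*_{n,m,1,q}(\sigma,B).
\end{equation*}
It remains to check that $\min(R,k)\gtrsim k$. If $k\le p/2$, then $m\le 2p/k$, so $R\ge \lfloor k/2\rfloor$, and this is at least $k/3$ when $k\ge2$. The cases $k=1$ and $k>p/2$, where $m\in\{1,2\}$ and $R\ge p/2\ge k/2$, are handled directly.

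Next we match the parameters. Because $m\le 2p/k$ and $m\ge p/k$, we have $\log(2m)\asymp\log(2p/k)$. More precisely, $\log(2p/k)\le\log(2m)\le\log(4p/k)\le 2\log(2p/k)$, using $2p/k\ge2$. Consequently $\Lambda_q(1)$ computed at dimension $m$, call it $\Lambda^\circ$, satisfies $\Lambda_q(k)\le\Lambda^\circ\le 2^{1-2/q}\Lambda_q(k)\le 2\Lambda_q(k)$.

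\textbf{Small regime.} Suppose $\log\Lambda_q(k)\le1$. If also $\log\Lambda^\circ\le1$, then \eqref{eq:maximal_lower_small} directly gives \eqref{eq:main_lower_small}, since $\log(2m)\ge\log(2p/k)$. If instead $1<\log\Lambda^\circ\le 1+\log2$, we fall into the heavy regime of Theorem~\ref{thm:maximal_lower}. Here we must compare its output with the variance term. Because $\log\Lambda^\circ$ is bounded by a constant, the heavy term satisfies
\begin{equation*}
B\left(\frac{\log(2m)}{n\log\Lambda^\circ}\right)^{1-1/q}\asymp B\left(\frac{\log(2m)}{n}\right)^{1-1/q}=(\sigma\wedge B)\sqrt{\frac{\log(2m)}{n}}\,\bigl(\Lambda^{\circ}\bigr)^{1/2}\ge(\sigma\wedge B)\sqrt{\frac{\log(2m)}{n}}.
\end{equation*}
This gives the claim, provided the hypothesis $\log(2m)\ge\log\Lambda^\circ$ of \eqref{eq:maximal_lower_large} holds. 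If that hypothesis fails, then $\log(2m)<1+\log 2$, so $p/k$ is bounded. In that case we instead use a direct Rademacher-type construction, or monotonicity in $\sigma$: reducing $\sigma$ to $\sigma'$ so that $\log\Lambda^\circ=1$ loses only a constant factor, and the class only shrinks. \emph{Decreasing $\sigma$ is the cleaner device, and we use it for this boundary case.}

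\textbf{Heavy regime.} Suppose $\log\Lambda_q(k)>1$ and $\log(2p/k)\ge\log\Lambda_q(k)$. Then $\log\Lambda^\circ\ge\log\Lambda_q(k)>1$, and $\log(2m)\ge\log(2p/k)$, while $\log\Lambda^\circ\le\log\Lambda_q(k)+\log 2\le 2\log\Lambda_q(k)$. The hypothesis $\log(2m)\ge\log\Lambda^\circ$ of \eqref{eq:maximal_lower_large} may fail by a factor of at most $2$. We handle this by enlarging the effective $\sigma$ downward, as before: replace $\sigma$ by $\sigma'\le\sigma$ with $\log\Lambda^\circ(\sigma')$ slightly above $1$, or pick $\sigma'$ so that the hypothesis holds exactly. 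This changes $\log\Lambda$ by at most a constant factor, hence the bound by at most a constant factor.

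The main obstacle is this bookkeeping at the regime boundaries. Everything else is the direct block construction. We expect the ratio $\log(2m)/\log\Lambda^\circ\in[1/2,\cdot]$, together with the free parameter $\sigma'\le\sigma$, to absorb all discrepancies into the universal constant $c$.
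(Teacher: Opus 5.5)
\textbf{The heavy-regime step fails.} Your block construction and the comparison $\Lambda_q(k)\le\Lambda^\circ\le 2^{1-2/q}\Lambda_q(k)$ are correct and match Lemma~\ref{lem:block}. The problem is the device you use at the regime boundaries, which goes the wrong way. Since $\Lambda^\circ=\frac{B^2}{(\sigma\wedge B)^2}\bigl(\log(2m)/n\bigr)^{1-2/q}$, replacing $\sigma$ by $\sigma'\le\sigma$ can only \emph{increase} $\Lambda^\circ$. This has three consequences:
\begin{itemize}
\item it cannot bring $\log\Lambda^\circ$ down to $1$ in your small-regime boundary case;
\item it cannot produce ``$\log\Lambda^\circ(\sigma')$ slightly above $1$'' when already $\log\Lambda^\circ(\sigma)\ge\log\Lambda_q(k)>1$;
\item it makes the hypothesis $\log(2m)\ge\log\Lambda^\circ$ of \eqref{eq:maximal_lower_large} harder to satisfy, not easier.
\end{itemize}
Raising $\sigma$ would move $\Lambda^\circ$ in the right direction, but it enlarges the class, which is useless for a lower bound. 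Your heavy-regime argument rests entirely on this device, so as written it does not establish \eqref{eq:main_lower_large}.

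\textbf{The missing fact: the hypothesis transfers exactly.} No adjustment is needed, because the hypothesis of \eqref{eq:maximal_lower_large} carries over to dimension $m$ with no loss. Put $x=\log(2m)$ and $y=\log(2p/k)$, so that $1<\log\Lambda_q(k)\le y\le x$. Then
\begin{equation*}
\log\Lambda^\circ=\log\Lambda_q(k)+\Bigl(1-\frac2q\Bigr)\log\frac{x}{y}\le y+\frac{x}{y}-1\le x .
\end{equation*}
The last inequality is equivalent to $(x-y)(1-1/y)\ge0$; this is \eqref{eq:condition_preserved}. Together with $\log\Lambda^\circ\ge\log\Lambda_q(k)>1$, Theorem~\ref{thm:maximal_lower} applies directly at dimension $m$. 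Your bound $\log(2m)/\log\Lambda^\circ\ge\tfrac12\log(2p/k)/\log\Lambda_q(k)$ then converts the result back to the $k$-parameters at a constant cost.

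\textbf{The small regime.} Drop the $\sigma$-reduction in the boundary sub-case and use the Rademacher alternative you already mention. In fact the proof of \eqref{eq:maximal_lower_small} never uses $\log\Lambda_q(1)\le1$. It uses independent Rademacher coordinates when $\log(2m)\le n$, and the sparse signs $A R_{ij}\varepsilon_{ij}$ otherwise; both constructions are admissible for all parameter values. So that bound holds at dimension $m$ unconditionally, and your case split there is unnecessary. This is how the paper proceeds.
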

Combining Corollaries~\ref{cor:upper} and \ref{cor:lower} yields the following characterization.

\begin{corollary}\label{cor:moment-characterization}
For all $n\ge1$, $p\ge1$, $k\in[p]$, $q\ge2$, and $\sigma,B>0$ such that
\begin{enumerate}[label=(\roman*)]
\item the condition $\log\Lambda_q(k)\le1$ holds, one has
\begin{equation*}
    \Ec_{n,p,k,q}(\sigma,B)\asymp\Ec^*_{n,p,k,q}(\sigma,B)\asymp\sqrt{k}\min\left\{B,
    (\sigma\wedge B)\sqrt{\frac{\log(2p/k)}{n}}\right\};
\end{equation*}
\item the conditions $\log\Lambda_q(k)>1$  and $\log(2p/k)\ge\log\Lambda_q(k)$ hold, one has
\begin{equation*}
    \Ec_{n,p,k,q}(\sigma,B)\asymp\Ec^*_{n,p,k,q}(\sigma,B)\asymp\sqrt{k}\min\left\{B,
    B\left({\frac{\log(2p/k)}{n\log\Lambda_q(k)}}\right)^{1-1/q}\right\}.
\end{equation*}
\end{enumerate}
\end{corollary}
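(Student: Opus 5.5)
The plan is to sandwich both quantities between the upper bound of Corollary~\ref{cor:upper} and the lower bound of Corollary~\ref{cor:lower}, using the trivial inclusion $\Ec^*_{n,p,k,q}\le\Ec_{n,p,k,q}$ from Proposition~\ref{prop:iid_reduction_sparse} to chain them. Concretely, in each regime I would write
\begin{equation*}
    c\sqrt{k}\,R \le \Ec^*_{n,p,k,q}(\sigma,B)\le \Ec_{n,p,k,q}(\sigma,B)\le \sqrt{k}\,\min\{B, C\,\tilde R\},
\end{equation*}
where $R$ is the rate appearing in Corollary~\ref{cor:lower} and $\tilde R$ is the rate in Corollary~\ref{cor:upper}. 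The remaining task is to check that $\min\{B,C\tilde R\}\le C\min\{B,\tilde R\}$ (valid for $C\ge1$) and that $\tilde R$ coincides with the second argument of the minimum in $R$. Both points are bookkeeping.

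In case (i), $\log\Lambda_q(k)\le1$, so the indicator in Corollary~\ref{cor:upper} selects the term $C(\sigma\wedge B)\sqrt{\log(2p/k)/n}$. This gives $\Ec\le C\sqrt{k}\min\{B,(\sigma\wedge B)\sqrt{\log(2p/k)/n}\}$. That matches \eqref{eq:main_lower_small} with constants $c$ and $C$, so all three quantities are comparable. In case (ii), $\log\Lambda_q(k)>1$, so the upper bound selects $CB(\log(2p/k)/(n\log\Lambda_q(k)))^{1-1/q}$. The additional hypothesis $\log(2p/k)\ge\log\Lambda_q(k)$ is exactly what \eqref{eq:main_lower_large} requires, so the lower bound applies with the same expression inside the minimum.

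I do not expect a genuine obstacle, since the substance is already contained in Corollaries~\ref{cor:upper} and \ref{cor:lower}, which are assumed. The only point requiring care is that the constants are universal. That is, $c$ and $C$ must not depend on $n,p,k,q,\sigma,B$, so that the notation $\asymp$ is meaningful. The quantity $\Lambda_q(k)$ must also be the same in both corollaries (the same $\log(2p/k)$ with the same $\sigma\wedge B$), so that the case split is identical on both sides. I would verify this by inspecting the definition of $\Lambda_q(k)$ once and noting that both corollaries use it verbatim.
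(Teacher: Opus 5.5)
Your proposal is correct and matches the paper, which obtains this corollary exactly by combining the upper bound of Corollary~\ref{cor:upper} with the lower bound of Corollary~\ref{cor:lower}, linked through the trivial inclusion $\Ec^*_{n,p,k,q}(\sigma,B)\le\Ec_{n,p,k,q}(\sigma,B)$. The only bookkeeping needed is the one you identified: $\min\{B,C\tilde R\}\le C\min\{B,\tilde R\}$ for $C\ge1$, together with the fact that both corollaries use the same case split on $\Lambda_q(k)$.
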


\section{\texorpdfstring{$\ell_\infty$}{l-infinity}-Envelope sub-Weibull classes}\label{sec:envelope-subweibull}
For a non-decreasing function $\Psi:[0,\infty)\to[0,\infty)$ with $\Psi(0)=0$ and a scalar random variable $W$, the Orlicz norm is defined as 
\begin{equation*}
    \norm{W}_{\Psi}
    :=
    \inf\left\{c>0:
    \Eb\left[\Psi\left(\frac{|W|}{c}\right)\right]\leq 1
    \right\},
\end{equation*}
where the infimum over an empty set is interpreted as $\infty$. Background on Orlicz norms and generalized Bernstein--Orlicz norms can be found in \citet{Krasnosel1961Convex}, \citet{Dudley1999Uniform}, \citet{kuchibhotla2022moving}, and \citet{bong2023tight}. In particular, by taking $\psi_\alpha(x)=\exp(x^\alpha)-1$, $\alpha>0$, the sub-Weibull norm is defined as
\begin{equation*}
    \norm{W}_{\psi_\alpha}
    :=
    \inf\left\{c>0:
    \Eb\exp\left[\left(\frac{|W|}{c}\right)^\alpha\right]\leq 2
    \right\}.
\end{equation*}

For $n\geq1$, $p\geq1$, $\alpha>0$, and $\sigma,K>0$, let $\Pc_{n,p}^{\psi_\alpha,\infty}(\sigma,K)$ be the class of product measures $P^n=P_1\otimes\cdots\otimes P_n$ such that, for $X_i\sim P_i$, $\Eb X_i=0_p$ and
\begin{equation}\label{eq:Weibull_max_class}
    \max_{1\leq j\leq p}
    \frac1n\sum_{i=1}^n\Eb X_i(j)^2
    \leq \sigma^2,
    \qquad
    \max_{1\leq i\leq n}
    \norm{\max_{1\leq j\leq p}|X_i(j)|}_{\psi_\alpha}
    \leq K.
\end{equation}
For $k\in[p]$, the quantities of interest are defined as
\begin{align*}
    \Ec_{n,p,k,\psi_\alpha}^{\infty}(\sigma,K)
    &:={}
    \sup_{P^n\in\Pc_{n,p}^{\psi_\alpha,\infty}(\sigma,K)}
    \Eb_{P^n}
    \left\|
    \frac1n\sum_{i=1}^nX_i
    \right\|_{(k),2},\\
    \Ec_{n,p,k,\psi_\alpha}^{\infty,*}(\sigma,K)
    &:={}
    \sup_{P\in\Pc_{1,p}^{\psi_\alpha,\infty}(\sigma,K)}
    \Eb_{P^{\otimes n}}
    \left\|
    \frac1n\sum_{i=1}^nX_i
    \right\|_{(k),2}.
\end{align*}
The argument in Proposition~\ref{prop:iid_reduction_sparse} similarly applies and yields that
\begin{equation*}
    \Ec_{n,p,k,\psi_\alpha}^{\infty}(\sigma,K)\asymp\Ec_{n,p,k,\psi_\alpha}^{\infty,*}(\sigma,K).
\end{equation*}

For $k\in[p]$, define
\begin{align*}
    \Dc_{\alpha,k}
    :=
    \Bigg\{
    \Delta:&~
    1\vee\frac{\log(2p/k)}{n}
    \leq \Delta\leq \log(2p/k),\\
    &~K^2\frac{\log(2p/k)}{n\Delta}
    \left[
    \log\left(e+\frac{n\Delta}{\log(2p/k)}\right)
    \right]^{2/\alpha}
    e^{-\Delta}
    \leq \sigma^2
    \Bigg\}.
\end{align*}

The following theorem gives the corresponding upper and lower bounds.

\begin{theorem}\label{thm:sub_orlicz_maximal}
For every $\alpha>0$, there are constants $0<c_\alpha\le C_\alpha<\infty$, depending only on $\alpha$, such that for every $n\geq1$, $p\geq1$, and $\sigma,K>0$,
\begin{align*}
    \Ec_{n,p,1,\psi_\alpha}^{\infty}(\sigma,K)
    &\leq
    C_\alpha
    \min\Bigg\{
    K,
    \max\bigg\{(\sigma\wedge K)\sqrt{\frac{\log(2p)}{n}}, \\
    &\hspace{3.3cm}
    K\frac{\log(2p)}{n}
    \left[
    \log\left(e+\frac{n}{\log(2p)}\right)
    \right]^{1/\alpha}\bigg\}\Bigg\}.
\end{align*}
Moreover,
\begin{align*}
    \Ec_{n,p,1,\psi_\alpha}^{\infty,*}(\sigma,K)
    &\geq
    c_\alpha\min\Bigg\{
    K,
    \max\bigg\{(\sigma\wedge K)\sqrt{\frac{\log(2p)}{n}}, \\
    &\hspace{2.5cm}
    \sup_{\Delta\in\Dc_{\alpha,1}}
    K\frac{\log(2p)}{n\Delta}
    \left[
    \log\left(e+\frac{n\Delta}{\log(2p)}\right)
    \right]^{1/\alpha}\bigg\}\Bigg\}.
\end{align*}The supremum over an empty set is interpreted as $-\infty$.
\end{theorem}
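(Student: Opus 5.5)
We treat the upper and lower bounds separately. Write $L=\log(2p)$.

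\textbf{Upper bound.} The trivial part $\Ec\le CK$ follows from $\Eb\|\bar X\|_\infty\le n^{-1}\sum_i\Eb\max_j|X_i(j)|\lesssim_\alpha K$. For the main term we truncate at a level $\tau$. Set $X_i=Y_i+Z_i$, where
\[
Y_i=X_i\one\{\|X_i\|_\infty\le\tau\}-\Eb[\cdot],
\]
and $Z_i$ is the centered remainder.

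\emph{Bounded part.} The $Y_i$ are bounded by $2\tau$ with coordinate variances at most $\sigma^2$. Bernstein's inequality combined with a union bound over $2p$ signed coordinates gives
\[
\Eb\|\bar Y\|_\infty\lesssim \sigma\sqrt{L/n}+\tau L/n .
\]

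\emph{Tail part.} Use $\Eb\|\bar Z\|_\infty\le 2n^{-1}\sum_i\Eb\|X_i\|_\infty\one\{\|X_i\|_\infty>\tau\}$. For a $\psi_\alpha$ envelope with parameter $K$ this is at most $C_\alpha K(\tau/K)^{\,\cdot}e^{-(\tau/K)^\alpha}$, i.e. $\lesssim_\alpha \tau e^{-(\tau/K)^\alpha}$ up to polynomial factors.

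\emph{Choice of $\tau$.} Take $\tau=K[\log(e+n/L)]^{1/\alpha}$. Then $e^{-(\tau/K)^\alpha}\le L/n$ (up to the polynomial factor absorbed by adjusting constants, e.g.\ by using $2\log(e+n/L)$). The tail part is therefore $\lesssim \tau L/n$, and we obtain the stated maximum of the two terms. Since $\sigma$ may be replaced by $\sigma\wedge CK$ (the $\psi_\alpha$ bound controls the variance), the bound holds with $\sigma\wedge K$. A cleaner route is to invoke a Bernstein–Orlicz-type maximal inequality of \citet{kuchibhotla2022moving}, with the $\log(e+n/L)$ factor coming from the bound $\|\max_i\|X_i\|_\infty\|_{\psi_\alpha}\lesssim K(\log n)^{1/\alpha}$. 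We would nevertheless use the direct truncation, because it yields $\log(e+n/L)$ rather than $\log n$.

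\textbf{Lower bound.} Both pieces are realized by iid constructions in the style of Theorem~\ref{thm:maximal_lower}.

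\emph{Variance term.} Take independent symmetric three-point coordinates on $\{-a,0,a\}$, choosing $a\asymp \sigma\wedge K$ and the weights so that the variance is $(\sigma\wedge K)^2$ and $\|\max_j|X(j)|\|_{\psi_\alpha}\le K$. A bounded vector with $\|X\|_\infty\le a$ has $\psi_\alpha$ norm at most $a/(\log2)^{1/\alpha}$, so this is fine after rescaling by a constant. Theorem~\ref{thm:maximal_lower}, whose first case uses only bounded variables, then gives $\gtrsim\min\{K,(\sigma\wedge K)\sqrt{L/n}\}$.

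\emph{Heavy term.} Fix $\Delta\in\Dc_{\alpha,1}$ and use a sparse spike construction. Each coordinate independently equals
\[
M=K\,[\log(e+n\Delta/L)]^{1/\alpha}\quad\text{with probability } \pi=\frac{L}{n\Delta},
\]
and is recentred by subtracting $\pi M$. Then:
\begin{itemize}
\item The variance is about $\pi M^2$, and the second defining condition of $\Dc_{\alpha,1}$, which carries the factor $e^{-\Delta}$, is exactly what is needed to make it at most $\sigma^2$ after an appropriate spike design. In practice one puts an extra factor $e^{-\Delta}$ into the probability and lets the $p$ coordinates compensate via $p\pi\gtrsim e^{\Delta}$-type counting; we would follow the $\{-\rho,0,1-\rho\}$ construction of Theorem~\ref{thm:maximal_lower}.
\item The envelope condition $\Eb\exp((\max_j|X(j)|/cK)^\alpha)\le2$ is checked using that the expected number of spikes per sample is about $p\pi$, together with $\pi e^{(M/K)^\alpha}\lesssim$ const.
\item Among $n$ samples, some coordinate receives at least one spike with constant probability, because $np\pi e^{-\cdots}\gtrsim1$ when $\Delta\le L$. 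That coordinate's average is then $\gtrsim M/n$ times the number of hits. Choosing the number of hits of order $L/\Delta$ via a Poisson lower-tail bound on the maximum of $p$ binomials gives $\gtrsim K\frac{L}{n\Delta}[\log(e+n\Delta/L)]^{1/\alpha}$.
\item The centering $\pi M$ is negligible, since $\Delta\ge1$.
\end{itemize}
Finally, take the supremum over $\Delta$ and the min with $K$ (via the case $\Delta\ge L/n$).

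\textbf{Main obstacle.} The hardest step is the heavy-tail lower bound. One must calibrate the spike probability so that three requirements hold simultaneously: the variance constraint (through $\Dc_{\alpha,1}$), the $\psi_\alpha$ envelope constraint, and the event that the maximum binomial count over $p$ coordinates reaches order $L/\Delta$. We would first try binomial anti-concentration, namely $\Pb(\mathrm{Bin}(n,\pi)\ge m)\ge (n\pi/m)^m e^{-n\pi}/\mathrm{const}$ with $m\asymp L/\Delta$, together with independence across coordinates.
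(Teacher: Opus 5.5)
Your upper bound matches the paper's proof: truncate the envelope at $\tau=K[\log(e+n/L)]^{1/\alpha}$, apply Bennett/Bernstein with a union bound, and integrate the tail directly. The tail is in fact $\lesssim_\alpha\tau e^{-(\tau/K)^\alpha}$ with no extra polynomial factor, since $\tau\ge K$. The variance lower bound also works, provided you take the bounded constructions from the \emph{proof} of Theorem~\ref{thm:maximal_lower} with $B=(\log 2)^{1/\alpha}K$, not its statement, whose hypothesis $\log\Lambda_q(1)\le1$ does not apply here. For $L>n$ this means the sparse design on $\{-a,0,a\}$ with $a\asymp\min\{K,(\sigma\wedge K)\sqrt{L/n}\}$, not $a\asymp\sigma\wedge K$.

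\textbf{The gap: the heavy-term construction violates the envelope constraint.} You let coordinates spike independently. To meet the variance constraint you put $e^{-\Delta}$ into the per-coordinate spike probability, $\pi'=\pi e^{-\Delta}$ with $\pi=L/(n\Delta)$. A given row then contains a spike with probability $1-(1-\pi')^p\ge(1-e^{-1})\min\{1,p\pi'\}$, and $p\pi'=\tfrac12 e^{L-\Delta}\pi\ge1$ whenever $e^{L-\Delta}\ge 2n\Delta/L$. A concrete case is $\Delta=L/2$, $n\asymp L^2$, $L\to\infty$, with $\sigma^2$ at the threshold in $\Dc_{\alpha,1}$, so that the heavy term dominates the variance term and is exactly what must be produced. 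There $\max_j|X(j)|\ge M(1-\pi')$ with probability at least $1-e^{-1}$. Hence $\|\max_j|X(j)|\|_{\psi_\alpha}\gtrsim_\alpha M=K[\log(e+n\Delta/L)]^{1/\alpha}$, which is not $O_\alpha(K)$. Your check ``$\pi e^{(M/K)^\alpha}\lesssim$ const'' assumes the envelope spikes with probability $\asymp\pi$; with independent coordinates it spikes with probability $\asymp\min\{1,p\pi'\}$. Dropping the $e^{-\Delta}$ does not help: the envelope gets worse, and the variance $\pi M^2$ is no longer controlled by $\Dc_{\alpha,1}$. So your independent design cannot meet the three requirements in your last paragraph at once.

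\textbf{The missing idea: one activation per row, shared by all coordinates.} This decouples the envelope from the marginal variance. It is what the paper does, and it is also what the construction in Theorem~\ref{thm:maximal_lower} that you cite actually is. Set $X_i(j)=AR_i(Y_{ij}-\rho)$, where $R_i\sim\mathrm{Bernoulli}(\eta)$ with $\eta=L/(n\Delta)$ is the same for all $j$, $Y_{ij}\sim\mathrm{Bernoulli}(\rho)$ are iid with $\rho=e^{-\Delta}/2$, and $A=K\{\log(1+\eta^{-1})\}^{1/\alpha}$.
\begin{enumerate}
\item Envelope: since $\max_j|X_i(j)|\le AR_i$, one gets $\Eb\exp[(\max_j|X_i(j)|/K)^\alpha]\le 1-\eta+\eta(1+\eta^{-1})=2$, regardless of $p$.
\item Variance: the coordinate variance $A^2\eta\rho(1-\rho)$ carries the factor $e^{-\Delta}$, and it is at most $\sigma^2$ exactly by the definition of $\Dc_{\alpha,1}$.
\item Counting: conditionally on $N=\sum_iR_i$, which concentrates near $n\eta=L/\Delta$, the column sums are iid $\operatorname{Binomial}(N,\rho)$. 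With $m=\lceil 3n\eta/10\rceil$, one checks $p\rho^m(1-\rho)^N\ge e^{-1}$ using $\Delta\ge1$. So some column has at least $m$ hits, and $\|n^{-1}\sum_iX_i\|_\infty\ge A(m-N\rho)/n\gtrsim A\eta$, which is the heavy term.
\item Small case: when $1\le n\eta<16$, use $p\rho\ge1/4$ (that is, $\Delta\le L$) to obtain a single hit instead.
\end{enumerate}
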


\begin{remark}
Theorem~\ref{thm:sub_orlicz_maximal} is the sub-Weibull envelope analogue of Theorems~\ref{thm:maximal_upper} and \ref{thm:maximal_lower}. Indeed, the condition
$\norm{\max_{1\leq j\leq p}|X_i(j)|}_{\psi_\alpha}\leq K$ implies that $$\Pb\left(\max_{1\leq j\leq p}|X_i(j)|>Kt\right)\leq 2e^{-t^\alpha},$$ for $t\geq 0$, and integration gives 
\begin{equation}\label{eq:psi_to_moment_envelope}
    \left\{
    \Eb\max_{1\leq j\leq p}|X_i(j)|^q
    \right\}^{1/q}
    \leq
    K\left\{2\Gamma\left(1+\frac q\alpha\right)\right\}^{1/q}.
\end{equation} Applying
Theorem~\ref{thm:maximal_upper} with the moment envelope in \eqref{eq:psi_to_moment_envelope} gives a valid upper bound. In particular, the elementary optimization (see Section~\ref{app:proof-coordinate-wise} for a detailed derivation) that for $x\geq 0$,
\begin{equation}\label{eq:optimization}
    \inf_{q\geq2}\left\{2\Gamma\left(1+\frac{q}{\alpha}\right)\right\}^{1/q}x^{1-1/q}\asymp_\alpha \min\Set{\sqrt{x},x\log^{1/\alpha}\left(e/x\right)},
\end{equation} and taking $x = \log(2p)/(n\log\Lambda_q(1))$ results in the scaling of the upper bound in Theorem~\ref{thm:sub_orlicz_maximal}. The lower bound uses the same type of construction as in Theorem~\ref{thm:maximal_lower}.
\end{remark}

The extension to general $k$ follows from Lemmas~\ref{lem:order} and \ref{lem:block}.

\begin{corollary}\label{cor:sub_orlicz}
For every $\alpha>0$, there are constants $0<c_\alpha\le C_\alpha<\infty$, depending only on $\alpha$, such that for every $n\geq1$, $p\geq1$, $k\in[p]$, and $\sigma,K>0$,
\begin{align}
    \Ec_{n,p,k,\psi_\alpha}^{\infty}(\sigma,K)
    &\leq
    C_\alpha\sqrt{k}
    \min\Bigg\{
    K,
    \max\bigg\{(\sigma\wedge K)\sqrt{\frac{\log(2p/k)}{n}},\nonumber\\
    &\qquad\qquad K\frac{\log(2p/k)}{n}
    \left[
    \log\left(e+\frac{n}{\log(2p/k)}\right)
    \right]^{1/\alpha}\bigg\}\Bigg\}.
    \label{eq:sub_orlicz_upper_readable}
\end{align}
Moreover,
\begin{align}
    \Ec_{n,p,k,\psi_\alpha}^{\infty,*}(\sigma,K)
    &\geq
    c_\alpha\sqrt{k}\min\Bigg\{
    K,
    \max\bigg\{(\sigma\wedge K)\sqrt{\frac{\log(2p/k)}{n}},\nonumber\\
    &\qquad\sup_{\Delta\in\Dc_{\alpha,k}}
    K\frac{\log(2p/k)}{n\Delta}
    \left[
    \log\left(e+\frac{n\Delta}{\log(2p/k)}\right)
    \right]^{1/\alpha}\bigg\}\Bigg\},
    \label{eq:sub_orlicz_lower_readable}
\end{align}

\end{corollary}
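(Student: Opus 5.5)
The plan is to derive Corollary~\ref{cor:sub_orlicz} from the $k=1$ case, Theorem~\ref{thm:sub_orlicz_maximal}, using the two reduction lemmas already invoked for Corollaries~\ref{cor:upper} and \ref{cor:lower}. These are Lemma~\ref{lem:order}, the random partitioning upper reduction, and Lemma~\ref{lem:block}, the block-replication lower reduction. Both lemmas are structural statements about the $(k),2$ norm and a class of product measures. So the first step is to check that they apply verbatim to the envelope sub-Weibull class $\Pc^{\psi_\alpha,\infty}_{n,p}(\sigma,K)$.

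For the upper bound, Lemma~\ref{lem:order} partitions $[p]$ randomly into $k$ blocks of size about $\lceil p/k\rceil$. It then bounds $\norm{x}_{(k),2}$ by a constant times $\sqrt{k}$ times an average over blocks of the block maxima. The key point is that restricting $X_i$ to a subset of coordinates keeps us inside the class. The mean stays $0$, the coordinatewise variance bound stays $\le\sigma^2$, and $\norm{\max_{j\in S}|X_i(j)|}_{\psi_\alpha}\le\norm{\max_{j\in[p]}|X_i(j)|}_{\psi_\alpha}\le K$ by monotonicity of the Orlicz norm. Hence
\[
\Ec^{\infty}_{n,p,k,\psi_\alpha}(\sigma,K)\lesssim\sqrt{k}\,\Ec^{\infty}_{n,\lceil p/k\rceil,1,\psi_\alpha}(\sigma,K).
\]
Plugging in the upper bound of Theorem~\ref{thm:sub_orlicz_maximal} with $p$ replaced by $\lceil p/k\rceil$, I then use $\log(2\lceil p/k\rceil)\asymp\log(2p/k)$. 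This holds because $2p/k\ge2$ and $\lceil p/k\rceil\le 2p/k$, so $\log(2\lceil p/k\rceil)\le\log(4p/k)\le 2\log(2p/k)$. The only remaining nuisance is the factor $[\log(e+n/\log(2\lceil p/k\rceil))]^{1/\alpha}$. The map $x\mapsto x[\log(e+n/x)]^{1/\alpha}$ is increasing for $x>0$ and is doubling up to an $\alpha$-dependent constant. So changing $\log(2\lceil p/k\rceil)$ to $\log(2p/k)$ costs only a constant $C_\alpha$, which gives \eqref{eq:sub_orlicz_upper_readable}.

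For the lower bound, I take an iid law $P^\circ\in\Pc^{\psi_\alpha,\infty}_{1,m}(\sigma,K)$ with $m=\lfloor p/k\rfloor$. I replicate $X^\circ$ into $\floor{p/m}\ge k$ disjoint blocks and set the remaining coordinates to zero. Replication does not change the coordinatewise variances, and it does not change $\max_j|X(j)|$. So the new law lies in $\Pc^{\psi_\alpha,\infty}_{1,p}(\sigma,K)$, and the construction preserves the iid structure. Let $\bar X^\circ$ denote the average of the $X^\circ_i$. Taking $k$ blocks, the top-$k$ norm of the average is at least $\sqrt{k}\,\norm{\bar X^\circ}_\infty$. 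This is the content of Lemma~\ref{lem:block}, applied to $\Ec^{\infty,*}$. The lower bound of Theorem~\ref{thm:sub_orlicz_maximal} with $p$ replaced by $m$ then yields \eqref{eq:sub_orlicz_lower_readable}, after converting $\log(2m)$ to $\log(2p/k)$. For this conversion note $2m\ge p/k$ when $m\ge1$, so $\log(2m)\gtrsim\log(2p/k)$ when $p/k\ge 2$. When $p/k<2$ we have $\log(2p/k)\asymp1\asymp\log(2m)$.

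The main obstacle is this last transfer inside the supremum over $\Dc_{\alpha,k}$. The set $\Dc_{\alpha,1}$ computed with $m$ coordinates is not literally equal to $\Dc_{\alpha,k}$ computed with $p$ coordinates. I would handle it by taking $\Delta\in\Dc_{\alpha,k}$ and mapping it to $\Delta'=\Delta\log(2m)/\log(2p/k)$. This rescaling keeps the ratio $\log(\cdot)/(n\Delta)$ fixed, so the range constraints hold up to constants. I would accept a shrinkage of $\Delta'$ by a constant factor to preserve the range constraints and the defining inequality. The defining inequality involves $e^{-\Delta}$, and rescaling $\Delta$ by a bounded factor changes $e^{-\Delta}$ polynomially, not by a constant. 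So if a direct rescaling fails, I would instead pass to the underlying construction in the proof of Theorem~\ref{thm:sub_orlicz_maximal}, the three-point laws. I would rerun that construction directly with $\log(2p/k)$ in place of $\log(2p)$, using $m\asymp p/k$ coordinates. This costs only $\alpha$-dependent constants.
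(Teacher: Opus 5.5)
Your lower-bound half is sound and follows the paper's argument. Replicating an iid law on $m=\floor{p/k}$ coordinates into at least $k$ blocks gives the factor $\sqrt{k}$; the paper uses $\ceil{p/k}$ and Lemma~\ref{lem:block} and loses a $\sqrt2$. You also correctly note that $\Dc_{\alpha,k}$ does not transfer literally. Your fallback is exactly what is needed (and what the paper does implicitly): rerun the three-point construction with $\eta=\log(2p/k)/(n\Delta)$ and $\rho=e^{-\Delta}/2$ for $\Delta\in\Dc_{\alpha,k}$. The only changes are $\log m\ge\log(2p/k)-2\log2$ and $m\rho\ge 1/8$, which cost constants.

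The gap is in the upper bound. Lemma~\ref{lem:order} does not bound $\norm{x}_{(k),2}$ by $\sqrt{k}$ times an average of block maxima at the single scale $\ceil{p/k}$, and no such single-scale inequality is true. For $x=e_1$ one has $\norm{x}_{(k),2}=1$, while the average of the $k$ block maxima is $1/k$. Likewise $\sqrt{k}\,\Eb_J\max_{j\in J}|x(j)|=k^{-1/2}$ for a uniform $J$ of size $\ceil{p/k}$ when $k$ divides $p$. What the lemma actually gives is the dyadic bound \eqref{eq:order_sparse}. There the scale $m_s=2^s\wedge k$ involves a random coordinate set of size $\ceil{p/m_s}$, which is as large as $p$ for $s=0$. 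Hence $\Ec^{\infty}_{n,p,k,\psi_\alpha}(\sigma,K)\lesssim\sqrt{k}\,\Ec^{\infty}_{n,\ceil{p/k},1,\psi_\alpha}(\sigma,K)$ is not a structural consequence of the lemma. The informal display in the introduction is shorthand for the multi-scale argument combined with the explicit $k=1$ bounds.

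The step you skip is the actual content of the paper's proof. Apply Theorem~\ref{thm:sub_orlicz_maximal} conditionally at every scale, in dimension $\ceil{p/m_s}$. Then show that the resulting sum is $O(\sqrt{k})$ times the bound at scale $k$. Put $j=S-s$. Then $\sqrt{m_s}\le\sqrt{2k}\,2^{-j/2}$ and $\log(2p/k)\le\log(2\ceil{p/m_s})\le2(j+1)\log(2p/k)$. So, bounding the inner maximum by the sum, the variance term grows by at most a factor $\sqrt{2(j+1)}$. The second term grows by at most $2(j+1)$, because $f(\lambda x)\le\lambda f(x)$ for $\lambda\ge1$, where $f(x)=x\log^{1/\alpha}(e+n/x)$. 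The outer $\min\{K,\cdot\}$ passes through the sum by \eqref{eq:sum_one}. Since $\sum_{j\ge1}2^{-j/2}(j+1)<\infty$, this yields \eqref{eq:sub_orlicz_upper_readable}, as in Lemma~\ref{lem:sum}.

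A minor point: $f$ is not increasing when $\alpha$ is small. For $\alpha=1/10$ and $n=100$ one has $f(2)<f(1)$, so drop that claim. All the comparison between $\log(2\ceil{p/k})$ and $\log(2p/k)$ needs is the one-sided inequality $f(\lambda x)\le\lambda f(x)$. It holds because $\log(e+n/(\lambda x))\le\log(e+n/x)$.
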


When $1\in\Dc_{\alpha,k}$, the bounds in Corollary~\ref{cor:sub_orlicz} match up to constants depending only on $\alpha$. To see this, we note that for every $a\geq1$ and $\Delta\geq1$,
\begin{equation}\label{eq:delta-log-bound}
    \frac{1}{\Delta}\left[\log(e+a\Delta)\right]^{1/\alpha}
    \leq
    \max\left\{1,e^{1-1/\alpha}\alpha^{-1/\alpha}\right\}
    \left[\log(e+a)\right]^{1/\alpha}.
\end{equation}
Applying this with $a=n/\log(2p/k)$ gives the next characterization.

\begin{corollary}\label{cor:sub-orlicz-characterization}
Suppose that
\begin{equation*}
    K^2\frac{\log(2p/k)}{n}
    \left[
    \log\left(e+\frac{n}{\log(2p/k)}\right)
    \right]^{2/\alpha}
    \leq e\sigma^2.
\end{equation*}
Then
\begin{align*}
    &\qquad\Ec_{n,p,k,\psi_\alpha}^{\infty}(\sigma,K)
    \asymp
    \Ec_{n,p,k,\psi_\alpha}^{\infty,*}(\sigma,K)\\
    &\asymp_\alpha
    \sqrt{k}
    \min\Bigg\{
    K,
    \max\bigg\{(\sigma\wedge K)\sqrt{\frac{\log(2p/k)}{n}},
    K\frac{\log(2p/k)}{n}
    \left[
    \log\left(e+\frac{n}{\log(2p/k)}\right)
    \right]^{1/\alpha}\bigg\}\Bigg\}.
\end{align*}
\end{corollary}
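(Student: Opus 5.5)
The plan is to sandwich both quantities between the upper bound \eqref{eq:sub_orlicz_upper_readable} and the lower bound \eqref{eq:sub_orlicz_lower_readable} of Corollary~\ref{cor:sub_orlicz}. The equivalence $\Ec_{n,p,k,\psi_\alpha}^{\infty}\asymp\Ec_{n,p,k,\psi_\alpha}^{\infty,*}$ is already available from the iid reduction argument of Proposition~\ref{prop:iid_reduction_sparse}. Indeed, $\Ec^{\infty,*}\le\Ec^{\infty}$ trivially, and $\Ec^{\infty}\lesssim\Ec^{\infty,*}$ by the same argument. Hence it suffices to show that the supremum over $\Dc_{\alpha,k}$ in the lower bound is bounded below, up to a constant depending only on $\alpha$, by the second term in the upper bound,
\begin{equation*}
T:=K\frac{\log(2p/k)}{n}\left[\log\left(e+\frac{n}{\log(2p/k)}\right)\right]^{1/\alpha},
\end{equation*}
or else that this term is dominated by the variance term anyway.

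Write $L=\log(2p/k)$ and $a=n/L$. The constraint defining $\Dc_{\alpha,k}$ is monotone in $\Delta$. Its left-hand side is
\begin{equation*}
K^2\frac{1}{a\Delta}\left[\log(e+a\Delta)\right]^{2/\alpha}e^{-\Delta}.
\end{equation*}
This expression decreases in $\Delta$ at least eventually, since $e^{-\Delta}$ dominates, and the range constraint is $1\vee(1/a)\le\Delta\le L$. I would split into two cases according to whether $a\ge1$.

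\emph{Case $a\ge 1$, i.e.\ $L\le n$.} I would take $\Delta=1$. It satisfies $1\vee(1/a)=1\le\Delta$, and $\Delta\le L$ holds because $L\ge\log 2$; if $L<1$, I would instead use $\Delta$ near $L$ and absorb the constants. The hypothesis of the corollary is exactly $K^2a^{-1}[\log(e+a)]^{2/\alpha}\le e\sigma^2$, i.e.\ $K^2a^{-1}[\log(e+a)]^{2/\alpha}e^{-1}\le\sigma^2$. So $1\in\Dc_{\alpha,k}$, and plugging $\Delta=1$ gives exactly $T$. Thus the lower bound dominates $c_\alpha\sqrt{k}\min\{K,\max\{(\sigma\wedge K)\sqrt{L/n},T\}\}$, which matches the upper bound up to constants. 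The inequality \eqref{eq:delta-log-bound} is what guarantees that no other $\Delta$ gives more than $T$ up to constants. This is consistent, but it is not needed for the lower bound.

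\emph{Case $a<1$, i.e.\ $L>n$.} Here I would not rely on $\Dc_{\alpha,k}$. Both $\sqrt{L/n}>1$ and $T\ge K L/n>K$, so the upper bound is of order $\sqrt{k}K$. The lower bound should be $\sqrt{k}\min\{K,(\sigma\wedge K)\sqrt{L/n}\}$. This is of order $\sqrt k K$ only if $\sigma\gtrsim K\sqrt{n/L}$. The hypothesis $K^2a^{-1}[\log(e+a)]^{2/\alpha}\le e\sigma^2$ with $a<1$ gives $\sigma^2\gtrsim K^2 L/n\ge K^2$, so $\sigma\wedge K\asymp_\alpha K$ and $\min\{K,(\sigma\wedge K)\sqrt{L/n}\}\asymp K$. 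Hence the bounds match.

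The main obstacle is bookkeeping at the boundary, namely when $L<1$ makes $\Delta=1$ violate $\Delta\le L$. If that happens, I would show directly that $T\lesssim(\sigma\wedge K)\sqrt{L/n}$ under the hypothesis, so that the variance term alone matches. The hypothesis gives $T\le\sqrt e\,\sigma\sqrt{L/n}$, and it also yields $T\lesssim K\sqrt{L/n}$ whenever $a\ge1$, using $L/n\le\sqrt{L/n}$ and the $\log$ factor times $\sqrt{L/n}$ being $\le\sqrt e\,\sigma/K$. Combining these gives $T\lesssim(\sigma\wedge K)\sqrt{L/n}$ in either sub-case, which closes the argument.
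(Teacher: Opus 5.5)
Your proposal is correct in substance and follows the paper's route. The hypothesis is exactly the variance constraint defining $\Dc_{\alpha,k}$ at $\Delta=1$, so whenever $1\in\Dc_{\alpha,k}$ the $\Delta=1$ term of \eqref{eq:sub_orlicz_lower_readable} equals the second term of \eqref{eq:sub_orlicz_upper_readable}. As you note, \eqref{eq:delta-log-bound} is not needed for the matching. The paper's justification stops at ``when $1\in\Dc_{\alpha,k}$.'' Your separate treatment of $L>n$ (via the Rademacher term, using $\sigma^2\ge K^2L/(en)>K^2/e$) and of $L<1$ covers cases the paper does not address.

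Two slips need fixing.

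First, when $L<1$ the set $\Dc_{\alpha,k}$ is empty, because it requires $1\le\Delta\le L$. So the claim that $\Delta\le L$ follows from $L\ge\log 2$ is false, and ``use $\Delta$ near $L$'' is not available. Only your final-paragraph argument through the variance term works in this case.

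Second, in that argument the hypothesis only gives $T\le\sqrt e\,\sigma\sqrt{L/n}$, which suffices when $\sigma\le K$. When $\sigma>K$ you need $T\lesssim_\alpha K\sqrt{L/n}$. Neither the hypothesis nor $L/n\le\sqrt{L/n}$ gives this, since the latter leaves the logarithmic factor uncontrolled. The correct reason is that $a=n/L\ge1$ in this case, so
\begin{equation*}
T=K\sqrt{L/n}\cdot a^{-1/2}\left[\log(e+a)\right]^{1/\alpha}\le\Bigl(\sup_{b\ge1}b^{-1/2}\left[\log(e+b)\right]^{1/\alpha}\Bigr)K\sqrt{L/n},
\end{equation*}
and the supremum is finite and depends only on $\alpha$. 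Since $\sqrt{L/n}<1$ here, the Rademacher lower bound is $\gtrsim\sqrt{k}(\sigma\wedge K)\sqrt{L/n}$. The upper bound is then $\lesssim_\alpha$ the same quantity, which closes the case.
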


\section{Marginal sub-Weibull classes}\label{sec:coordinate-wise}
Let $\Pc_{n,p}^{\psi_\alpha,{\rm m}}(\sigma,K)$ be the class of product measures $P^n=P_1\otimes\cdots\otimes P_n$ such that, for $X_i\sim P_i$, $\Eb X_i=0_p$ and
\begin{equation}\label{eq:Weibull_marginal_class}
    \max_{1\leq j\leq p}
    \frac1n\sum_{i=1}^n\Eb X_i(j)^2
    \leq \sigma^2,
    \qquad
    \max_{1\leq i\leq n}\max_{1\leq j\leq p}
    \norm{X_i(j)}_{\psi_\alpha}
    \leq K.
\end{equation}
Define $\Ec_{n,p,k,\psi_\alpha}^{\rm m}(\sigma,K)$ and $\Ec_{n,p,k,\psi_\alpha}^{{\rm m},*}(\sigma,K)$ by replacing the envelope class in Section~\ref{sec:envelope-subweibull} by $\Pc_{n,p}^{\psi_\alpha,{\rm m}}(\sigma,K)$. Again, following the Proof of Proposition~\ref{prop:iid_reduction_sparse}, one may deduce that
\begin{equation*}
    \Ec_{n,p,k,\psi_\alpha}^{\rm m}(\sigma,K)
    \asymp
    \Ec_{n,p,k,\psi_\alpha}^{{\rm m},*}(\sigma,K).
\end{equation*}

The sub-Weibull envelope condition implies the coordinate-wise sub-Weibull condition, i.e., $\Pc_{n,p}^{\psi_\alpha,\infty}(\sigma,K)\subset\Pc_{n,p}^{\psi_\alpha,{\rm m}}(\sigma,K)$. Also, a union bound gives the elementary inclusion that
\begin{equation*}
    \Pc_{n,p}^{\psi_\alpha,{\rm m}}(\sigma,K)
    \subset
    \Pc_{n,p}^{\psi_\alpha,\infty}
    \left(\sigma,K\{\log_2(2p)\}^{1/\alpha}\right).
\end{equation*} Indeed, let $A=\log_2(2p)$ and $\max_j\norm{X(j)}_{\psi_\alpha}\le K$, then
\begin{align*}
    \Eb\exp\left[\left(\frac{\max_{1\leq j\leq p}|X(j)|}{KA^{1/\alpha}}\right)^\alpha\right]
    &=\Eb\max_{1\leq j\leq p}\exp\left[\left(\frac{|X(j)|}{KA^{1/\alpha}}\right)^\alpha\right] \\
    &\leq \Eb\left\{\sum_{j=1}^p\exp\left[\left(\frac{|X(j)|}{K}\right)^\alpha\right]\right\}^{1/A}
    \leq 2.
\end{align*} Therefore, we always have
\begin{equation*}
    \Ec_{n,p,k,\psi_\alpha}^{\infty}(\sigma,K)\leq \Ec_{n,p,k,\psi_\alpha}^{\rm m}(\sigma,K)\leq \Ec_{n,p,k,\psi_\alpha}^{\infty}(\sigma,K\{\log_2(2p)\}^{1/\alpha}).
\end{equation*} However, the above crude inequalities do not give a sharp characterization of $\Ec_{n,p,k,\psi_\alpha}^{\rm m}(\sigma,K)$ since, in particular, the upper bound replaces the marginal scale $K$ by an envelope scale of order $K\log_2^{1/\alpha}(2p)$.

The next theorem gives a sharper upper bound for $\Ec_{n,p,k,\psi_\alpha}^{\rm m}(\sigma,K)$.

\begin{theorem}\label{thm:k1}
There exist a universal constant $c>0$ and a constant $C_\alpha>0$, depending only on $\alpha$, such that for all $n,p\ge1$ and $\sigma,K>0$,
\begin{align}\label{eq:k1_alpha_lt_1}
    \Ec_{n,p,1,\psi_\alpha}^{{\rm m}}(\sigma,K)
    &\leq
    (c\sigma\wedge C_\alpha K)\sqrt{\frac{\log(2p)}{n}}
    +C_\alpha K\frac{1\vee\log(p)}{n}
    \left[\log\left(e+\frac{n}{1\vee\log(p)}\right)\right]^{1/\alpha}\nonumber\\
    &\quad +C_\alpha K\frac{1\vee\log^{1/\alpha}(p)}{n}\mathbf{1}_{(0,1)}(\alpha)
\end{align}
\end{theorem}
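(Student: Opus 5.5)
We bound $\Eb\max_j|S(j)|$ with $S=n^{-1}\sum_i X_i$ by integrating a Bernstein-type tail bound for each coordinate sum and then taking a union bound over the $p$ coordinates. By the same reduction as in Proposition~\ref{prop:iid_reduction_sparse} we may symmetrize if convenient, but a direct approach works.

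\textbf{Case $\alpha\ge1$.} Each $X_i(j)$ is centered and satisfies $\norm{X_i(j)}_{\psi_\alpha}\le K$, and the averaged variance is at most $\sigma^2$. Bernstein's inequality for sub-exponential variables (since $\psi_\alpha$ dominates $\psi_1$ up to constants) gives, for each $j$,
\begin{equation*}
\Pb\bigl(|S(j)|>t\bigr)\le 2\exp\bigl(-c\,n\min\{t^2/\sigma^2,\,t/K\}\bigr).
\end{equation*}
A union bound and integration give
\begin{equation*}
\Eb\max_j|S(j)|\lesssim\sigma\sqrt{\log(2p)/n}+K\log(2p)/n.
\end{equation*}
This is slightly cruder than the claimed $\log^{1/\alpha}(e+n/\log p)$ factor, but in fact it is smaller, so it suffices. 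The prefactor $c\sigma\wedge C_\alpha K$ is handled as follows: we have $\sigma^2\le \max_{i,j}\Eb X_i(j)^2\lesssim K^2$, so the variance term is at most $C_\alpha K\sqrt{\log(2p)/n}$.

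\textbf{Case $\alpha<1$.} The main obstacle is that the $\psi_\alpha$ tail is heavier than exponential. We truncate at a level $M$: write $X_i(j)=Y_i(j)+Z_i(j)$, where $Y_i(j)=X_i(j)\one\{|X_i(j)|\le M\}-\Eb[\cdot]$ and $Z_i(j)$ is the centered remainder.

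For the bounded part, Bernstein's inequality gives the contribution
\begin{equation*}
\sigma\sqrt{\log(2p)/n}+M\log(2p)/n.
\end{equation*}

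For the tail part we apply the generalized Bernstein--Orlicz / Kuchibhotla--Chakrabortty inequality: a centered sum of independent $\psi_\alpha$ variables satisfies
\begin{equation*}
\Pb\Bigl(\Bigl|\sum_i W_i\Bigr|>C_\alpha\bigl(\sqrt{n}\,\sigma_W\sqrt{t}+K t^{1/\alpha}\bigr)\Bigr)\le 2e^{-t}.
\end{equation*}
We choose $M=K\log^{1/\alpha}(e+n/\log p)$. Then $\Eb Z_i(j)^2\lesssim K^2 e^{-(M/K)^\alpha}\mathrm{poly}(M/K)$ is tiny, of order $K^2\log p/n$ up to log factors.

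Taking $t=\log(2p)+u$, a union bound, and integration over $u$ then yield three contributions. The truncated part gives the middle term $K\frac{\log p}{n}\log^{1/\alpha}(e+n/\log p)$. The $Kt^{1/\alpha}/n$ part gives the extra term $C_\alpha K\log^{1/\alpha}(p)/n$, which appears only when $\alpha<1$. The residual-variance term is absorbed.

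The replacement of $\log(2p)$ by $1\vee\log p$ is cosmetic, since the two differ only by constants.

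The delicate step is the choice of $M$, and we expect it to be the main obstacle. The choice must satisfy two requirements at once. First, the bias $\Eb|X|\one\{|X|>M\}$ must be at most of the stated order. Second, the residual variance must be at most $K^2\log p/n$, up to the factor $\log^{2/\alpha}$. Verifying this requires the tail estimate $\Eb|X|^2\one\{|X|>M\}\lesssim_\alpha K^2(M/K)^{2}e^{-(M/K)^\alpha}$, which we will check by integrating $2e^{-(s/K)^\alpha}$.
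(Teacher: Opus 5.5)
There is a genuine gap. In both cases you rely on a Bernstein-type inequality that pairs the \emph{actual} variance $\sigma^2$ with the raw $\psi_\alpha$ scale $K$, and no such inequality holds.

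For $\alpha\ge1$, Bernstein's inequality for sub-exponential summands has $\sum_i\norm{X_i(j)}_{\psi_1}^2$ (of order $nK^2$) in the quadratic term, not $n\sigma^2$. Keeping $\sigma^2$ there forces a logarithmic inflation of the scale in the linear term. Your conclusion $\Eb\max_j|S(j)|\lesssim\sigma\sqrt{\log(2p)/n}+K\log(2p)/n$ is not ``smaller, hence sufficient''; it is a stronger claim, and it is false. Use the single-jump construction behind \eqref{eq:bernoulli_rademacher_lower_maximal}:
\begin{itemize}
\item take $X_i(j)=M\varepsilon_{ij}\eta_{ij}$ with $\eta_{ij}\sim\mathrm{Bernoulli}(\rho)$, $\rho=1/(8np)$, and $M=K\{\log(1+8np)\}^{1/\alpha}$;
\item then $\norm{X_i(j)}_{\psi_\alpha}\le K$ and $\sigma^2=\rho M^2$;
\item with probability at least $e^{-1/8}/8$ exactly one entry is nonzero, so $\Eb\max_j|S(j)|\ge e^{-1/8}M/(8n)$;
\item your bound equals $C(M/n)\sqrt{\log(2p)/(8p)}+CK\log(2p)/n$.
\end{itemize}
For $\alpha=1$, letting $p\to\infty$ with $\log n/\log p\to\infty$, the ratio diverges. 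This $\log n$ effect is exactly what the factor $\log^{1/\alpha}(e+n/(1\vee\log p))$ in \eqref{eq:k1_alpha_lt_1} pays for, so $\alpha\ge1$ needs truncation too. Also, $\sigma^2\le\max_{i,j}\Eb X_i(j)^2$ is backwards; what you need is that the variance proxy is at most $\sigma^2\wedge2\Gamma(1+2/\alpha)K^2$.

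For $\alpha<1$, your truncation level and bounded part match the paper's, but the tail step fails for the same reason. The displayed inequality with $\sqrt n\,\sigma_W\sqrt t$ and $Kt^{1/\alpha}$ is not what \citet{kuchibhotla2022moving} prove, and it is false. Take $W_i=M\varepsilon_i\eta_i$ with $\eta_i\sim\mathrm{Bernoulli}(\rho)$, $M=K\{\log(1+1/\rho)\}^{1/\alpha}$, $n\rho=1/\lambda$ and $\rho\le e^{-\lambda}$. Your display would give $\Pb(|\sum_iW_i|\ge M)\le2e^{-c_\alpha\lambda}$, although this probability is at least of order $1/\lambda$.

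The genuine GBO bounds do not rescue the step. Either they carry $(\sum_i\norm{W_i}_{\psi_\alpha}^2)^{1/2}$ in the first term; a truncated tail part can still have $\psi_\alpha$-norm of order $K$, so this yields $K\sqrt{\log(2p)/n}$ and destroys the $c\sigma\wedge C_\alpha K$ prefactor. Or they pay $\norm{\max_i|W_i|}_{\psi_\alpha}$, of order $K\log^{1/\alpha}n$, in the second term, which only reproduces \eqref{eq:KC}. Your choice of $M$ makes the display give the right order at $t\asymp\log(2p)$. But integrating over $u$ uses it for all larger $t$, where these tail sums grow much faster than $\sqrt t$.

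The missing idea is to avoid any variance-based inequality for the tail part. The paper proceeds as follows:
\begin{enumerate}
\item Truncate at $K\{(1+\log2)A\}^{1/\alpha}$ with $A=\log(e+n/r)$ and $r=1\vee\log p$.
\item Bound the nonnegative sums $\sum_iU_i(j)$ of absolute tail parts directly; centering costs only a factor $2$.
\item Dominate $U_i(j)$ stochastically by $K(1+\log2)^{1/\alpha}\eta_i(A+\xi_i)^{1/\alpha}$, with $\eta_i\sim\mathrm{Bernoulli}(e^{-A})$ and $\xi_i$ standard exponential. The expected number of active summands is then $ne^{-A}\le r$.
\item Control $L_r$-norms via Lemmas~\ref{lem:bernoulli_moment}, \ref{lem:gamma_moment} and \ref{lem:exp_power_sum}, giving order $rA^{1/\alpha}+r^{1/\alpha}\mathbf{1}_{(0,1)}(\alpha)$.
\item Conclude with $\Eb\max_j\le p^{1/r}\max_j\norm{\cdot}_{L_r}\le e\max_j\norm{\cdot}_{L_r}$.
\end{enumerate}
With \eqref{eq:subadd} in place of the $\alpha<1$ estimates, the same argument covers $\alpha\ge1$.
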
 The upper bound in \eqref{eq:k1_alpha_lt_1} vanishes as long as $\log(p) = o(n^{1\wedge \alpha})$.

\begin{remark}[Comparison with existing maximal inequalities]
The GBO bound in Theorem~3.4 of \cite{kuchibhotla2022moving} (see also Remark~3.2 therein) implies, under the same marginal sub-Weibull condition in \eqref{eq:Weibull_marginal_class}, that
\begin{equation}\label{eq:KC}
    \Eb\left\|\frac{1}{n}\sum_{i=1}^nX_i\right\|_\infty\leq
    C_1\sigma\sqrt{ \frac{\log(ep)}{n} }
    +
    C_2(\alpha,K)\frac{\log^{1/\alpha}(2n)\log^{1/(\alpha\wedge1)}(ep)}{n}.
\end{equation} By contrast, Theorem~\ref{thm:k1} implies that
\begin{equation}\label{eq:ours}
    \Eb\left\|\frac{1}{n}\sum_{i=1}^nX_i\right\|_\infty
    \leq c\sigma\sqrt{\frac{\log(ep)}{n}}
    +C_\alpha K\Bigg[\frac{\log(ep)\log^{1/\alpha}(2n/\log(ep))}{n}+\frac{\log^{1/\alpha}(ep)}{n}\mathbf{1}_{(0,1)}(\alpha)\Bigg],
\end{equation}
by enlarging the constants.  While both bounds in \eqref{eq:KC} and \eqref{eq:ours} track the same variance term, the second term on the right-hand side of \eqref{eq:ours} improves the dependence on $n$ and $p$ in \eqref{eq:KC}. Indeed, 
\begin{align*}
    \frac{\log(ep)\log^{1/\alpha}(2n/\log(ep))}{n}&<\frac{\log^{1/\alpha}(2n)\log^{1/(\alpha\wedge1)}(ep)}{n},\qquad \forall \alpha>0,\\
    \frac{\log^{1/\alpha}(ep)}{n}&<\frac{\log^{1/\alpha}(2n)\log^{1/\alpha}(ep)}{n},\qquad \forall \alpha\in(0,1).
\end{align*}
\end{remark}
\begin{theorem}\label{thm:mar_weibull_lower_maximal}
For every $n\geq1$, $p\geq1$, $\alpha>0$, and $\sigma,K>0$,
\begin{equation}\label{eq:lower_rademacher_maximal}
    \Ec_{n,p,1,\psi_\alpha}^{{\rm m}, *}(\sigma,K)
    \gtrsim (\log2)^{1/\alpha}
    (\sigma\wedge K)
    \min\left\{1,\sqrt{\frac{\log(2p)}{n}}\right\}.
\end{equation}
Let
\begin{equation*}
    a_\alpha
    =
    \left[
    \min\left\{\frac12,\,e\alpha\log\left(2-\frac{1}{2\sqrt2}\right)\right\}
    \right]^{1/\alpha}.
\end{equation*}
If
\begin{equation}\label{eq:centered_bernoulli_feasible_maximal}
    \log(2p)\leq n \qquad\mbox{and}\qquad a_\alpha^2K^2\frac{\log(2p)}{8n}
    \left\{\log\left(\frac{8n}{\log(2p)}\right)\right\}^{2/\alpha}
    \leq
    \sigma^2,
\end{equation}
then
\begin{equation}\label{eq:centered_bernoulli_lower_maximal}
    \Ec_{n,p,1,\psi_\alpha}^{{\rm m}, *}(\sigma,K)
    \gtrsim a_\alpha
    K\frac{\log(2p)}{n}
    \left\{\log\left(\frac{8n}{\log(2p)}\right)\right\}^{1/\alpha}.
\end{equation}
Finally, if
\begin{equation}\label{eq:bernoulli_rademacher_feasible_maximal}
    \frac{K^2\{\log(1+8np)\}^{2/\alpha}}{8np}
    \leq
    \sigma^2,
\end{equation}
then
\begin{equation}\label{eq:bernoulli_rademacher_lower_maximal}
    \Ec_{n,p,1,\psi_\alpha}^{{\rm m}, *}(\sigma,K)
    \gtrsim
    \frac{K\{\log(1+8np)\}^{1/\alpha}}{n}.
\end{equation}
\end{theorem}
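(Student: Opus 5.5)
Each of the three lower bounds will come from an explicit iid construction with independent coordinates. In each case I will check membership in $\Pc_{1,p}^{\psi_\alpha,{\rm m}}(\sigma,K)$ and then bound the expected maximum of the sample average from below. Independence across coordinates gives
\[
\Pb\Bigl(\max_j|\bar X(j)|>t\Bigr)=1-\bigl(1-\Pb(|\bar X(1)|>t)\bigr)^p\ge 1-e^{-p\Pb(|\bar X(1)|>t)},
\]
so it suffices to find a level $t$ at which $\Pb(|\bar X(1)|>t)\gtrsim 1/p$. Then $\Eb\max_j|\bar X(j)|\ge t(1-e^{-c})$.

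\textbf{Bound \eqref{eq:lower_rademacher_maximal}.} Take $X(j)=s\,\epsilon_j$ with independent Rademacher $\epsilon_j$ and $s=\sigma\wedge (K(\log 2)^{1/\alpha})$. Since $\Eb\exp((s/c)^\alpha)=\exp((s/c)^\alpha)\le 2$ exactly when $c\ge s/(\log 2)^{1/\alpha}$, we get $\norm{s\epsilon}_{\psi_\alpha}=s(\log2)^{-1/\alpha}\le K$, and the variance is $s^2\le\sigma^2$. The Rademacher bound quoted after Theorem~\ref{thm:maximal_lower} then gives
\[
\Eb\max_j|\bar X(j)|\ge s\min\{1,\sqrt{\log(2p)/n}\}/\sqrt{\log 4},
\]
and $s\ge (\log 2)^{1/\alpha}(\sigma\wedge K)\min\{1,(\log 2)^{-1/\alpha}\}$. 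I need to handle the factor carefully, absorbing $\min$ of constants into $\gtrsim$. I expect the paper's form to hold after restricting to $s=(\log2)^{1/\alpha}(\sigma\wedge K)$ when $\alpha$ is such that this is $\le\sigma$. Otherwise I would use a three-point variant on $\{-1,0,1\}$ with small mass to meet the variance.

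\textbf{Bound \eqref{eq:centered_bernoulli_lower_maximal}.} Use the centered Bernoulli coordinate $X(j)=M(\xi_j-\pi)$ with $\xi_j\sim$ Bernoulli$(\pi)$, where $\pi=\log(2p)/(8n)\le 1/8$ and $M=a_\alpha K\{\log(1/\pi)\}^{1/\alpha}$. The steps are:
\begin{enumerate}
\item \emph{Variance.} $M^2\pi(1-\pi)\le\sigma^2$ is exactly condition \eqref{eq:centered_bernoulli_feasible_maximal}.
\item \emph{Orlicz norm.} Compute $\Eb\exp((|X|/K)^\alpha)\le \pi e^{a_\alpha^\alpha\log(1/\pi)}+(1-\pi)e^{(a_\alpha\pi\log^{1/\alpha}(1/\pi))^\alpha}$. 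The definition of $a_\alpha$ (the $1/2$ and the $e\alpha\log(2-1/(2\sqrt2))$ pieces) is designed to make this $\le2$. For the second piece one uses $\pi\log^{1/\alpha}(1/\pi)\le (e\alpha)^{-1/\alpha}$-type maximization. This is the fiddly step.
\item \emph{Tail of the average.} $n\bar X(1)=M(S-n\pi)$ with $S\sim$ Bin$(n,\pi)$ and mean $n\pi=\log(2p)/8$. I need $\Pb(S\ge m)\gtrsim 1/p$ for some $m\approx 2n\pi+\log(2p)/c$ on the order of $\log(2p)$. This follows from a Poisson/binomial lower tail estimate $\Pb(S\ge m)\ge\binom nm\pi^m(1-\pi)^{n-m}\ge (n\pi/m)^m e^{-2n\pi}$, which is $\ge e^{-C\log(2p)}$. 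Getting exponent constant $\le1$ requires choosing $m=\lceil\log(2p)/4\rceil$-ish, and $(1/2)^{m}e^{-\log(2p)/4}\ge (2p)^{-c'}$ with $c'<1$. Then $\bar X\gtrsim M\log(2p)/n$, giving the claim. If $p$ is small, I fall back on $m=1$.
\end{enumerate}

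\textbf{Bound \eqref{eq:bernoulli_rademacher_lower_maximal}.} Take $X(j)=M\epsilon_j\xi_j$ with $\xi_j\sim$ Bernoulli$(\pi)$, $\pi=1/(8np)$, and $M=K\{\log(1+8np)\}^{1/\alpha}$. Then $\Eb e^{(|X|/K)^\alpha}=1-\pi+\pi(1+1/\pi)=2$, and the variance $M^2\pi\le\sigma^2$ is \eqref{eq:bernoulli_rademacher_feasible_maximal}. The event that exactly one of the $np$ pairs $(i,j)$ has $\xi=1$ has probability $np\pi(1-\pi)^{np-1}\ge e^{-1/4}/8$. On it, $\max_j|\bar X(j)|=M/n$, so $\Eb\ge cM/n$.

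The main obstacle is step 2 together with the binomial tail constant in the second construction.
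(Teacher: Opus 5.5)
Your proposal is correct and follows the paper's proof. It uses the same three iid constructions: the scaled Rademacher law; the centered Bernoulli law with $\rho=\log(2p)/(8n)$ and $A=a_\alpha K\{\log(1/\rho)\}^{1/\alpha}$, whose Orlicz norm is checked via $\sup_{0<x<1}x^\alpha\log(1/x)=1/(e\alpha)$; and the sparse Bernoulli--Rademacher law with $\rho=1/(8np)$ together with the ``exactly one nonzero entry'' event. The only difference is cosmetic: the paper handles the binomial tail by the cases $\log(2p)\le4$, $4<\log(2p)\le8$, $\log(2p)>8$ with thresholds $1$, $2$, $\lfloor\log(2p)/2\rfloor$, instead of your $m\approx\log(2p)/4$. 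Your hedge in the first bound is unnecessary: since $\log2<1$, $(\log2)^{1/\alpha}(\sigma\wedge K)\le\sigma$ for every $\alpha>0$, so the Rademacher scaling is always feasible and no three-point fallback is needed.
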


\begin{remark} The first two lower bounds in \eqref{eq:lower_rademacher_maximal} and \eqref{eq:centered_bernoulli_lower_maximal}, respectively, match the components in the upper bounds \eqref{eq:k1_alpha_lt_1}. It remains to compare 
\eqref{eq:bernoulli_rademacher_lower_maximal}. Since $n,p\ge1$,
\[
    \log(1+8np)
    \leq \log 9+\log n+\log p
    \leq 5(1\vee\log p+\log(e+n/(1\vee\log p))).
\]
If $\alpha\geq1$, then
\[
    \{\log(1+8np)\}^{1/\alpha}
    \leq
    2\cdot5^{1/\alpha}(1\vee\log p)\,\log^{1/\alpha}\big(e+n/(1\vee\log p)\big).
\]
If $0<\alpha<1$, then
\[
    \{\log(1+8np)\}^{1/\alpha}
    \leq
    5^{1/\alpha}2^{1/\alpha-1}
    \left\{
        (1\vee\log p)^{1/\alpha}+(1\vee\log p)\,\log^{1/\alpha}\big(e+n/(1\vee\log p)\big)
    \right\}.
\]
After multiplying by $K/n$, the right-hand side of
\eqref{eq:bernoulli_rademacher_lower_maximal} is bounded by the last two terms on the right-hand side of \eqref{eq:k1_alpha_lt_1}, up to a constant depending only on
$\alpha$.
\end{remark}

The three lower-bound terms are obtained from Rademacher, three-point, and centered Bernoulli-type constructions, respectively.  The proof is included in Appendix~\ref{app:proof-coordinate-wise}.

The corresponding bounds for the top-$k$ Euclidean norm follow.

\begin{corollary}\label{cor:mar_weibull_upper}
There exist constants $c>0$ and $C_\alpha>0$, depending only on $\alpha$, such that for all $n\ge1$, $p\ge1$, $k\in[p]$, and $\sigma,K>0$,
\begin{align}\label{eq:cor:mar_weibull_upper}
    \Ec_{n,p,k,\psi_\alpha}^{{\rm m}}(\sigma,K)
    \leq \sqrt{k}\Bigg[&(c\sigma\wedge C_\alpha K)\sqrt{\frac{\log(2p/k)}{n}}
    +C_\alpha K\frac{1\vee\log(p/k)}{n}
    \log^{1/\alpha}\left(e+\frac{n}{1\vee\log(p/k)}\right)\nonumber\\
    &\quad +C_\alpha K\frac{1\vee\{\log(p/k)\}^{1/\alpha}}{n}\mathbf{1}_{(0,1)}(\alpha)\Bigg].
\end{align}
\end{corollary}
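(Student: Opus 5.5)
The plan is to deduce Corollary~\ref{cor:mar_weibull_upper} from Theorem~\ref{thm:k1} through the random partitioning reduction of Lemma~\ref{lem:order}, exactly as Corollary~\ref{cor:upper} follows from Theorem~\ref{thm:maximal_upper}.

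\textbf{Step 1: block reduction.} Lemma~\ref{lem:order} bounds the expected top-$k$ norm of a $p$-dimensional average by $\sqrt{k}$ times a universal constant times the expected coordinatewise maximum of averages of sub-vectors of dimension $m:=\lceil p/k\rceil$. The coordinate sets are the cells of a random partition of $[p]$ into $k$ blocks. The marginal class $\Pc^{\psi_\alpha,{\rm m}}_{n,p}(\sigma,K)$ is defined only through marginal constraints: the centering, the per-coordinate average variance bound $\sigma^2$, and the per-coordinate bound $\norm{X_i(j)}_{\psi_\alpha}\le K$. All of these are inherited by any sub-vector $(X_i(j))_{j\in J}$, and padding with zero coordinates up to dimension $m$ also preserves them. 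Hence
\begin{equation*}
\Ec^{\rm m}_{n,p,k,\psi_\alpha}(\sigma,K)\lesssim\sqrt{k}\,\Ec^{\rm m}_{n,m,1,\psi_\alpha}(\sigma,K).
\end{equation*}
This step requires that Lemma~\ref{lem:order} be stated, or provable, for a generic class closed under restriction to coordinate subsets rather than only for $\Pc_{n,p}(q,\sigma,B)$. I will check its proof for this; I expect the argument uses only that closure property, since the random partition only selects subsets of coordinates.

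\textbf{Step 2: apply Theorem~\ref{thm:k1} with $p$ replaced by $m$.} It remains to translate the logarithms, and for $1\le k\le p$ we have $p/k\le m\le 2p/k$. This gives three comparisons.
\begin{itemize}[label=$\cdot$]
\item $\log(2m)\le\log(4p/k)\le 2\log(2p/k)$, since $2p/k\ge2$.
\item $1\vee\log m\le 1\vee(\log 2+\log(p/k))\le 2(1\vee\log(p/k))$.
\item The map $x\mapsto x\log^{1/\alpha}(e+n/x)$ is nondecreasing up to an $\alpha$-dependent constant: its derivative's sign is governed by $\log(e+n/x)-\frac{n/x}{\alpha(e+n/x)}$, which is bounded below by a negative constant depending on $\alpha$. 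Alternatively, use the crude bound $\log(e+n/(2x))\ge\tfrac12\log(e+n/x)$.
\end{itemize}
By these comparisons, the second term is at most $C_\alpha K\,(1\vee\log(p/k))\,n^{-1}\log^{1/\alpha}(e+n/(1\vee\log(p/k)))$. Similarly, $1\vee\log^{1/\alpha}m\le 2^{1/\alpha}(1\vee\{\log(p/k)\}^{1/\alpha})$ handles the third term when $\alpha<1$. The variance term becomes $(c\sigma\wedge C_\alpha K)\sqrt{2\log(2p/k)/n}$. Enlarging $c$ and $C_\alpha$, with $c$ still universal since the factors used there are absolute, yields \eqref{eq:cor:mar_weibull_upper}.

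The main obstacle is Step~1, namely confirming that the random partitioning lemma holds for the marginal class with the same parameters $\sigma$ and $K$. If it were only stated for the moment class, I would re-run its argument. In that argument, for a fixed realization of the top-$k$ index set, a uniformly random partition into $k$ blocks places each of the top coordinates in a distinct block with constant probability in expectation, or places a constant fraction of them in distinct blocks. This gives $\norm{x}_{(k),2}^2\lesssim\Eb_\pi\sum_{r\le k}\max_{j\in\text{block }r}x(j)^2$. Taking the expectation over the data, applying Jensen's inequality, and using the block-wise $k=1$ bound then gives the factor $\sqrt{k}$. This uses nothing about the distribution beyond the class being closed under coordinate restriction, and the marginal class is closed under it.
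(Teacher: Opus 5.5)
\textbf{There is a genuine gap in Step 1.} You misread Lemma~\ref{lem:order}. It does not reduce to sub-vectors of a single dimension $\ceil{p/k}$, and it does not use a random partition. It is the multi-scale bound \eqref{eq:order_sparse}: the scale $m_s=2^s\wedge k$ contributes $\sqrt{m_s}$ times the expected maximum over a uniformly random subset of size $\ceil{p/m_s}$. So the sub-vector dimensions range up to $p$ itself at $s=0$.

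A single-scale version is false even for deterministic vectors. For $x=e_1$ one has $\norm{x}_{(k),2}=1$, while $\sqrt{k}\,\Eb_J\max_{j\in J}|x(j)|=\sqrt{k}\ceil{p/k}/p\le 2/\sqrt{k}$ for $|J|=\ceil{p/k}$. So closure of the marginal class under coordinate restriction, combined with Lemma~\ref{lem:order}, does not give $\Ec^{\rm m}_{n,p,k,\psi_\alpha}\lesssim\sqrt{k}\,\Ec^{\rm m}_{n,\ceil{p/k},1,\psi_\alpha}$.

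Your fallback does not close the gap either. The deterministic inequality $\norm{x}_{(k),2}^2\lesssim\Eb_\pi\sum_{r\le k}\max_{j\in B_r}x(j)^2$ is fine. But after taking expectations and applying Jensen you are left with $\Eb\max_{j\in B_r}S_n(j)^2$, a second moment of a block maximum. Theorem~\ref{thm:k1} only bounds the first moment $\Eb\max_j|S_n(j)|$, and first moments cannot substitute here. If $k$ block maxima $Y_r$ are independent and equal $M$ with probability $1/k$ (else $0$), then $\Eb(\sum_rY_r^2)^{1/2}\ge(1-e^{-1})M$, while $\sqrt{k}\max_r\Eb Y_r=M/\sqrt{k}$.

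\textbf{The missing idea is summation across scales,} which is how the paper argues, just as in Corollary~\ref{cor:upper}. Condition on $J_{m_s}$ and apply Theorem~\ref{thm:k1} in dimension $\ceil{p/m_s}$ for every $s$. Then show $\sum_{s=0}^S\sqrt{m_s}\,b(\ceil{p/m_s})\lesssim_\alpha\sqrt{k}\,b(p/k)$, where $b(d)$ is the right side of \eqref{eq:k1_alpha_lt_1} in dimension $d$.

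This is the content of Lemma~\ref{lem:sum}. With $j=S-s$, one has $\sqrt{m_s}\le\sqrt{2k}\,2^{-j/2}$ and $\log(2p/m_s)\le(j+1)\log(2p/k)$. So the polylogarithmic growth of $b$ in the dimension is beaten by geometric decay. Term by term:
\begin{itemize}
\item For the variance term, use \eqref{eq:sum_log} with $\beta=1/2$; this keeps $c$ universal.
\item For the $\alpha<1$ term, use \eqref{eq:sum_log_power} with $\beta=1/\alpha$.
\item For the middle term, use the same bound on $1\vee\log\ceil{p/m_s}$, together with $\log\bigl(e+n/(1\vee\log\ceil{p/m_s})\bigr)\le\log\bigl(e+n/(1\vee\log(p/k))\bigr)$. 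This holds because $\ceil{p/m_s}\ge p/k$.
\end{itemize}

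Your Step 2 comparisons are correct but cover only the top scale $s=S$. Once you replace the single-scale reduction with this multi-scale summation, the rest of your argument goes through.
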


\begin{corollary}\label{cor:mar_weibull_lower}
For every $n\geq1$, $p\geq1$, $k\in[p]$, $\alpha>0$, and $\sigma,K>0$,
\begin{equation}\label{eq:lower_rademacher_sparse}
    \Ec_{n,p,k,\psi_\alpha}^{{\rm m}, *}(\sigma,K)
    \gtrsim (\log2)^{1/\alpha}\sqrt{k}
    (\sigma\wedge K)
    \min\left\{1,\sqrt{\frac{\log(2\ceil{p/k})}{n}}\right\}.
\end{equation}
If
\begin{equation}\label{eq:centered_bernoulli_feasible}
    \log(2\ceil{p/k})\leq n \qquad\mbox{and}\qquad a_\alpha^2K^2\frac{\log(2\ceil{p/k})}{8n}
    \left\{\log\left(\frac{8n}{\log(2\ceil{p/k})}\right)\right\}^{2/\alpha}
    \leq
    \sigma^2,
\end{equation}
then
\begin{equation}\label{eq:centered_bernoulli_lower_sparse}
    \Ec_{n,p,k,\psi_\alpha}^{{\rm m}, *}(\sigma,K)
    \gtrsim a_\alpha \sqrt{k}
    K\frac{\log(2\ceil{p/k})}{n}
    \left\{\log\left(\frac{8n}{\log(2\ceil{p/k})}\right)\right\}^{1/\alpha}.
\end{equation}
If
\begin{equation}\label{eq:bernoulli_rademacher_feasible}
    \frac{K^2\{\log(1+8n\ceil{p/k})\}^{2/\alpha}}{8n\ceil{p/k}}
    \leq
    \sigma^2,
\end{equation}
then
\begin{equation}\label{eq:bernoulli_rademacher_lower_sparse}
    \Ec_{n,p,k,\psi_\alpha}^{{\rm m}, *}(\sigma,K)
    \gtrsim\sqrt{k}
    \frac{K\{\log(1+8n\ceil{p/k})\}^{1/\alpha}}{n}.
\end{equation}
\end{corollary}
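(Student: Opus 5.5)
The plan is to reduce Corollary~\ref{cor:mar_weibull_lower} to the $k=1$ lower bounds of Theorem~\ref{thm:mar_weibull_lower_maximal}, using the block-replication construction of Lemma~\ref{lem:block}. I would use $\lceil p/k\rceil$-dimensional blocks, matching the $\lceil p/k\rceil$ that appears in the statement. Set $m=\lceil p/k\rceil$ and $R=\lfloor p/m\rfloor$. Given a law $P^\circ\in\Pc_{1,m}^{\psi_\alpha,{\rm m}}(\sigma,K)$, define the $p$-dimensional vector $X$ as in the text:
\begin{equation*}
X((r-1)m+\ell)=X^\circ(\ell)\quad\text{for } 1\le r\le R,\ 1\le\ell\le m,
\end{equation*}
and $X(j)=0$ for the remaining coordinates.

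First I check that the law of $X$ lies in $\Pc_{1,p}^{\psi_\alpha,{\rm m}}(\sigma,K)$. Every coordinate of $X$ is either $0$ or a copy of some $X^\circ(\ell)$. Hence $X$ is centered, its coordinatewise second moments are bounded by $\sigma^2$, and its marginal $\psi_\alpha$ norms are bounded by $K$. This is the step where marginal classes are easier than envelope classes: replication does not change marginals at all.

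Second, I compare norms. Write $\bar X=n^{-1}\sum_i X_i$, and let $j^*$ be a maximizing coordinate of $|\bar X^\circ|$. Choose the index set $J$ consisting of the $R$ copies of $j^*$. Then $|J|=R\le k$, which holds because $Rm\le p$ and hence $R\le p/m\le k$. This choice gives
\begin{equation*}
\|\bar X\|_{(k),2}\ge \sqrt{R}\,\|\bar X^\circ\|_\infty .
\end{equation*}
Next I need $R\gtrsim k$. Since $m\le p/k+1\le 2p/k$, we get $p/m\ge k/2$. Also $R=\lfloor p/m\rfloor\ge 1$ because $m\le p$. Together these give $R\ge\max\{1,\lfloor k/2\rfloor\}\ge k/3$. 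Taking expectations and then the supremum over $P^\circ$ yields
\begin{equation*}
\Ec^{{\rm m},*}_{n,p,k,\psi_\alpha}(\sigma,K)\ge 3^{-1/2}\sqrt{k}\,\Ec^{{\rm m},*}_{n,m,1,\psi_\alpha}(\sigma,K).
\end{equation*}

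Finally, I apply Theorem~\ref{thm:mar_weibull_lower_maximal} with $p$ replaced by $m$. Conditions \eqref{eq:centered_bernoulli_feasible} and \eqref{eq:bernoulli_rademacher_feasible} are exactly \eqref{eq:centered_bernoulli_feasible_maximal} and \eqref{eq:bernoulli_rademacher_feasible_maximal} with $p=m$. The three conclusions then transfer directly, giving \eqref{eq:lower_rademacher_sparse}, \eqref{eq:centered_bernoulli_lower_sparse} and \eqref{eq:bernoulli_rademacher_lower_sparse}.

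The only delicate point is the counting bound $R\gtrsim k$, specifically the edge case $k$ close to $p$, for instance $k=p$ with $m=1$, where $R=p$. That case is covered by the estimate $R\ge\max\{1,\lfloor k/2\rfloor\}$ above. Apart from that, no obstacle is expected.
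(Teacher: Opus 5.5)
Your proposal is correct and is essentially the paper's proof. Both embed an iid construction of dimension $\lceil p/k\rceil$ by repeating each coordinate $\lfloor p/\lceil p/k\rceil\rfloor$ times, bound $\|\cdot\|_{(k),2}$ below by $\sqrt{R}$ times the sup-norm of the small average, and apply Theorem~\ref{thm:mar_weibull_lower_maximal} with $p$ replaced by $\lceil p/k\rceil$.

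The only difference is your cruder count $R\ge k/3$ against the paper's $R\ge k/2$, which changes only the absolute constant. As a side remark, replication also leaves the envelope $\max_j|X(j)|$ unchanged, so this step is no harder for the envelope classes (cf.\ Lemma~\ref{lem:block}).
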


The following corollary records the resulting characterization of $\Ec_{n,p,k,\psi_\alpha}^{{\rm m}}(\sigma,K)$.
\begin{corollary}\label{cor:mar_weibull_characterization}
Let $n\geq1$, $p\geq1$, $k\in[p]$, $\alpha>0$, and $\sigma,K>0$.
Suppose that \eqref{eq:centered_bernoulli_feasible} holds. If
$\alpha\in(0,1)$, suppose in addition that
\eqref{eq:bernoulli_rademacher_feasible} holds. Then both quantities $\Ec_{n,p,k,\psi_\alpha}^{{\rm m}}(\sigma,K)$ and $\Ec_{n,p,k,\psi_\alpha}^{{\rm m},*}(\sigma,K)$ are of the same order, up to constants depending only on
$\alpha$, as the right-hand side of \eqref{eq:cor:mar_weibull_upper}.
\end{corollary}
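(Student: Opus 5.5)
The argument will combine the upper bound of Corollary~\ref{cor:mar_weibull_upper} with the three lower bounds of Corollary~\ref{cor:mar_weibull_lower}, and use $\Ec^{\rm m}\asymp\Ec^{{\rm m},*}$ (from the i.i.d.\ reduction) together with the trivial $\Ec^{{\rm m},*}\le\Ec^{\rm m}$. Write $R$ for the right-hand side of \eqref{eq:cor:mar_weibull_upper}. Then $\Ec^{{\rm m},*}\le\Ec^{\rm m}\le R$, so it suffices to show $\Ec^{{\rm m},*}\gtrsim_\alpha R$. Since $R$ is a sum of at most three nonnegative terms, it is enough to bound each term separately by $\Ec^{{\rm m},*}$ up to constants depending on $\alpha$.

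First, the logarithms must be aligned. The upper bound uses $\log(2p/k)$, $1\vee\log(p/k)$ and $\log(e+n/(1\vee\log(p/k)))$, while the lower bounds use $L:=\log(2\lceil p/k\rceil)$ and $\log(8n/L)$. Since $p/k\le\lceil p/k\rceil\le 2p/k$, we have $\log(2p/k)\le L\le\log(4p/k)\le 2\log(2p/k)$, and $1\vee\log(p/k)\asymp L$. Under the first part of \eqref{eq:centered_bernoulli_feasible}, $n\ge L$, so $8n/L\ge 8$. Hence $\log(8n/L)\asymp\log(e+n/(1\vee\log(p/k)))$, using that both arguments are comparable and bounded below by an absolute constant.

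The terms are then handled in turn.
\begin{enumerate}[label=(\alph*)]
\item \emph{Variance term.} Because $n\ge L$, we have $\min\{1,\sqrt{L/n}\}=\sqrt{L/n}$. Therefore \eqref{eq:lower_rademacher_sparse} gives $\sqrt{k}(\sigma\wedge K)\sqrt{\log(2p/k)/n}$ up to $(\log 2)^{1/\alpha}$. Moreover, $c\sigma\wedge C_\alpha K\le\max(c,C_\alpha)(\sigma\wedge K)$.
\item \emph{Second term.} By \eqref{eq:centered_bernoulli_feasible}, the bound \eqref{eq:centered_bernoulli_lower_sparse} applies. After the logarithm comparisons above, it dominates $\sqrt{k}\,C_\alpha K\,(1\vee\log(p/k))\log^{1/\alpha}(e+n/(1\vee\log(p/k)))/n$ up to $a_\alpha$ and $\alpha$-dependent constants.
\item \emph{Third term ($\alpha<1$ only).} Here \eqref{eq:bernoulli_rademacher_feasible} holds, and \eqref{eq:bernoulli_rademacher_lower_sparse} gives $\sqrt{k}K\{\log(1+8n\lceil p/k\rceil)\}^{1/\alpha}/n$. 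This is at least $\sqrt{k}K\{\log(p/k)\vee\log 9\}^{1/\alpha}/n\gtrsim_\alpha\sqrt{k}K(1\vee\{\log(p/k)\}^{1/\alpha})/n$. For $\alpha\ge1$ the indicator kills this term, so no assumption is needed.
\end{enumerate}
Summing the three bounds, and noting that a sum of three terms is at most three times their maximum, yields $R\lesssim_\alpha\Ec^{{\rm m},*}\le\Ec^{\rm m}\lesssim\Ec^{{\rm m},*}$.

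The main obstacle is the bookkeeping of logarithms in the second term. One must check that $\log(8n/L)$ is bounded below uniformly; it is at least $\log 8$ precisely because $n\ge L$ is assumed. One must also check that replacing $\lceil p/k\rceil$ by $p/k$ and $1\vee\log(p/k)$ costs only absolute factors, including when $p/k$ is near $1$. There $\log(p/k)\approx0$ while $L\ge\log2$, and the term $1\vee\cdot$ handles this case. Everything else is a direct citation of the earlier corollaries.
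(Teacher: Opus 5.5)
Your proposal is correct and follows essentially the same route as the paper. The upper bound of Corollary~\ref{cor:mar_weibull_upper} covers both quantities through $\Ec^{{\rm m},*}\le\Ec^{\rm m}$, and each summand is matched by the corresponding bound in Corollary~\ref{cor:mar_weibull_lower} after the same logarithm comparisons ($n\ge\log(2\ceil{p/k})$, $1\vee\log(p/k)\asymp\log(2\ceil{p/k})$, and $1\vee\log(p/k)\le\log(1+8n\ceil{p/k})$); as in the paper, your appeal to the i.i.d.\ reduction $\Ec^{\rm m}\lesssim\Ec^{{\rm m},*}$ is unnecessary, since $\Ec^{{\rm m},*}\le\Ec^{\rm m}\le R\lesssim_\alpha\Ec^{{\rm m},*}$ already closes the chain.
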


\appendix
\section{Proofs}

\subsection{Proofs for Section~\ref{sec:finite-moment}}\label{app:proofs-moment}

\begin{proof}[Proof of Proposition~\ref{prop:iid_reduction_sparse}]
The lower bound is immediate because every iid product measure $P^{\otimes n}$ with $P\in\Pc_{1,p}(q,\sigma,B)$ belongs to $\Pc_{n,p}(q,\sigma,B)$.  For the reverse inequality, the proof of Theorem~2.1 of \citet{basu2025maximalinequalitiesindependentrandom} applies with the norm $\norm{\cdot}_{(k),2}$ in place of $\norm{\cdot}_\infty$.
\end{proof}

\begin{proof}[Proof of Theorem~\ref{thm:maximal_upper}]
Theorem~\ref{thm:maximal_upper} follows from Proposition~\ref{prop:sup_upper} below. In particular, if $\log\Lambda_q(1)\leq 1$, then \eqref{eq:sup_upper_general} implies that
\begin{align*}
    \Ec_{n,p,1,q}(\sigma,B)
    \le
    \min\left\{B,\,
    \frac{5}{2}(\sigma\wedge B)\sqrt{\frac{\log(2p)}{n}}
    +\frac{11}{3}B\left({\frac{\log(2p)}{n}}\right)^{1-1/q}\right\}\\
    \leq \min\left\{B,\,
    \left(\frac{5}{2}+\frac{11\sqrt{e}}{3}\right)(\sigma\wedge B)\sqrt{\frac{\log(2p)}{n}}\right\}\leq \min\left\{B,\,
    9(\sigma\wedge B)\sqrt{\frac{\log(2p)}{n}}\right\}.
\end{align*}
\end{proof}

\begin{proposition}\label{prop:sup_upper}
For all $n,p\ge1$, $q\ge2$, and $\sigma,B>0$,
\begin{equation}\label{eq:sup_upper_general}
    \Ec_{n,p,1,q}(\sigma,B)
    \le
    \min\left\{B,\,
    \frac{5}{2}(\sigma\wedge B)\sqrt{\frac{\log(2p)}{n}}
    +\frac{11}{3}B\left({\frac{\log(2p)}{n}}\right)^{1-1/q}\right\}.
\end{equation}
If
\begin{equation}\label{eq:sup_large_condition}
    \log\left\{ {\frac{B^2}{(\sigma\wedge B)^2}}
    \left({\frac{\log(2p)}{n}}\right)^{1-2/q}\right\}>1,
\end{equation}
then
\begin{equation}\label{eq:sup_upper_log}
    \Ec_{n,p,1,q}(\sigma,B)
    \le
    \min\left\{B,
    8B\left({\frac{\log(2p)}{n\log\left\{{\frac{B^2}{(\sigma\wedge B)^2}}({\frac{\log(2p)}{n}})^{1-2/q}\right\}}}
    \right)^{1-1/q}\right\}.
\end{equation}
\end{proposition}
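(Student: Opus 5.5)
The plan is a truncation argument: split each summand at a level $\tau>0$ into a bounded part, handled by Bernstein plus a union bound, and a heavy tail, handled by the $q$th envelope moment. The trivial bound $B$ is immediate, since by Jensen
\[
\Eb\max_j\Bigl|\frac1n\sum_i X_i(j)\Bigr|\le \frac1n\sum_i\Eb\max_j|X_i(j)|\le \Mc_q(P^n)^{1/q}\le B .
\]
We may also replace $\sigma$ by $\sigma\wedge B$ throughout, using the identity $\Pc_{n,p}(q,\sigma,B)=\Pc_{n,p}(q,\sigma\wedge B,B)$. For $\tau>0$, set $M_i=\max_j|X_i(j)|$ and decompose $X_i=Y_i+Z_i$, where
\[
Y_i=X_i\one(M_i\le\tau)-\Eb[X_i\one(M_i\le\tau)],\qquad Z_i=X_i\one(M_i>\tau)-\Eb[X_i\one(M_i>\tau)].
\]
Both parts are centred because $\Eb X_i=0$. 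Truncating on the envelope $M_i$, rather than coordinatewise, keeps a single $\tau$ for all coordinates. It also makes the tail term controllable through $\Mc_q$ alone.

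For the tail part, the triangle inequality gives
\[
\Eb\Bigl\|\frac1n\sum_i Z_i\Bigr\|_\infty\le \frac2n\sum_i\Eb[M_i\one(M_i>\tau)]\le 2B^q\tau^{1-q},
\]
by Markov's inequality. For the bounded part, each coordinate has $|Y_i(j)|\le2\tau$ and average variance at most $\sigma^2$. The standard maximal Bernstein inequality, via the moment generating function and $\log(2p)$ for the $2p$ signed coordinates, gives
\[
\Eb\Bigl\|\frac1n\sum_iY_i\Bigr\|_\infty\le \sigma\sqrt{\frac{2\log(2p)}{n}}+\frac{2\tau\log(2p)}{3n}.
\]
Adding the two pieces,
\[
\Eb\Bigl\|\frac1n\sum_iX_i\Bigr\|_\infty\le \sqrt2\,\sigma\sqrt{\frac{\log(2p)}{n}}+\frac{2\tau\log(2p)}{3n}+2B^q\tau^{1-q}.
\]

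For \eqref{eq:sup_upper_general}, choose $\tau=B(n/\log(2p))^{1/q}$. The last two terms then each become a constant times $B(\log(2p)/n)^{1-1/q}$, namely $2/3$ and $2$. The stated constants $5/2$ and $11/3$ leave slack; for instance, one can use the cruder Bernstein constant $2\tau$ in place of $2\tau/3$, or a factor-2 loss from symmetrization. Hence I would not optimise the constants, and would only check that the numbers fall below $5/2$ and $11/3$.

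For \eqref{eq:sup_upper_log}, I would refine the treatment of the tail piece, because under \eqref{eq:sup_large_condition} the variance term is dominated. The crude bound $2B^q\tau^{1-q}$ loses the logarithm. Write $L=\log\Lambda_q(1)>1$.
\begin{enumerate}[label=(\arabic*)]
\item Take the larger threshold $\tau=B(n L/\log(2p))^{1/q}$, chosen so that $\tau\log(2p)/n\asymp B(\log(2p)/(nL))^{1-1/q}$.
\item Gain the extra factor by using the variance of the truncated part more efficiently. The Bernstein variance proxy for $Y$ may be taken as $\min\{\sigma^2,\ B^q\tau^{2-q}/\ldots\}$, and the tail may be handled through a Bennett-type (Poisson) bound rather than Markov. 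The Bennett function $h(u)\sim u\log u$ is exactly what produces the $1/\log\Lambda$ factor.
\item Concretely, apply Bennett's inequality to the bounded part with range $\tau$ and variance $\sigma^2$. This yields the deviation $\tau\log(2p)/\bigl(n\log(1+\tau^2\log(2p)/(n\sigma^2))\bigr)$.
\item Choose $\tau$ to balance this against $B^q\tau^{1-q}$. Then $\tau^2\log(2p)/(n\sigma^2)\approx\Lambda_q(1)\cdot(\text{power of }L)$, so its logarithm is comparable to $L$ once $L>1$. This comparability is exactly why the threshold in \eqref{eq:sup_large_condition} is $1$.
\end{enumerate}

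The main obstacle is this last balancing step. One needs to verify that $\log(1+\tau^2\log(2p)/(n\sigma^2))\gtrsim L$ uniformly for $L>1$, including its dependence on $q$ through the exponent $1/q$ on $L$, with an explicit constant (the $8$). I would first try $\tau=B\bigl(nL/\log(2p)\bigr)^{1/q}$ directly and bound $\log\bigl(\Lambda_q(1)L^{2/q}\bigr)\ge L$.
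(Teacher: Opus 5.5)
Your approach is the paper's. You truncate on the envelope $M_i$, bound the tail piece by $2B^q\tau^{1-q}$, and control the bounded piece by Bernstein/Bennett with a union bound over the $2p$ signed coordinates. The thresholds match too: $\tau=B(n/\log(2p))^{1/q}$ for \eqref{eq:sup_upper_general} and $\tau=B(nL/\log(2p))^{1/q}$, with $L=\log\Lambda_q(1)$, for \eqref{eq:sup_upper_log}. The decisive estimate for \eqref{eq:sup_upper_log}, including the constant $8$, is still left open, and a few claims need correcting.

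\textbf{The first bound.} Your sharp sub-gamma maximal bound $\sigma\sqrt{2\log(2p)/n}+2\tau\log(2p)/(3n)$ gives constants $\sqrt2$ and $2/3+2=8/3$, which suffices. The paper instead integrates the Bennett tail and gets $5/2$ and $5/3+2$. However, the slack you invoke is not there. Replacing $2\tau/3$ by $2\tau$ gives $2+2=4>11/3$. A symmetrization factor $2$ gives $2\sqrt2>5/2$. You must use the sharp form exactly as you wrote it.

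\textbf{The second bound.} Your steps (1)--(2) misplace where the gain comes from, although steps (3)--(4) recover.
\begin{itemize}
\item With the larger threshold, $\tau\log(2p)/n=L\cdot B(\log(2p)/(nL))^{1-1/q}$; this is not $\asymp$ the target. So the Bernstein range term is too large by exactly the factor $L$, which is why Bernstein fails and Bennett's Poisson regime is needed for the bounded piece.
\item The tail piece needs no refinement. The crude bound already gives $2B^q\tau^{1-q}=2B(\log(2p)/(nL))^{1-1/q}$. Bennett could not be applied to that piece anyway, since it is unbounded.
\end{itemize}

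\textbf{Closing the computation as the paper does.}
\begin{itemize}
\item Take range $2\tau$ (not $\tau$), variance $(\sigma\wedge B)^2=B^2(\log(2p)/n)^{1-2/q}e^{-L}$, and deviation $t=3vB(\log(2p)/(nL))^{1-1/q}$ with $v\ge1$.
\item Then $x=2\tau t/(\sigma\wedge B)^2=6ve^{L}L^{2/q-1}$.
\item Use $h(x)\ge x\{\log(1+x)-1\}$ together with $\log(1+x)-1\ge L-\log L+\log6-1\ge 5L/6$.
\item This bounds the Bennett exponent below by $\frac{nt}{2\tau}\cdot\frac{5L}{6}=\frac{5v}{4}\log(2p)$.
\item The union bound then gives the tail $\exp\{-(5v/4-1)\log(2p)\}$.
\item Integrate this over $v\ge1$, rather than reading off a single quantile as in your step (3). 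This yields $6B(\log(2p)/(nL))^{1-1/q}$ for the bounded piece.
\item Adding $2$ from the tail piece gives the constant $8$.
\end{itemize}
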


\begin{proof}[Proof of Proposition~\ref{prop:sup_upper}]
It is enough to prove the claim with $\sigma\wedge B$ in place of $\sigma$. Let $X_i$ be independent and admissible in dimension $m$, and write $M_i=\max_{1\le j\le m}|X_i(j)|$. The trivial bound is
\begin{equation}\label{eq:trivial_B}
    \Eb\left\Vert {\frac{1}{n}}\sum_{i=1}^nX_i\right\Vert_\infty
    \le {\frac{1}{n}}\sum_{i=1}^n\Eb M_i
    \le \left({\frac{1}{n}}\sum_{i=1}^n\Eb M_i^q\right)^{1/q}
    \le B.
\end{equation}
For \eqref{eq:sup_upper_general}, set $\tau=B(n/\log(2p))^{1/q}$. Define
\begin{equation*}
    U_i(j)=X_i(j)\mathbf{1}\{M_i\le\tau\},\quad
    Y_i(j)=U_i(j)-\Eb U_i(j),
\end{equation*}
\begin{equation*}
    V_i(j)=X_i(j)\mathbf{1}\{M_i>\tau\}-\Eb[X_i(j)\mathbf{1}\{M_i>\tau\}].
\end{equation*}
Then $X_i(j)=Y_i(j)+V_i(j)$ (because $\Eb[X_i(j)]=0$), $|Y_i(j)|\le2\tau$, and $\sum_i\Eb Y_i(j)^2\le n(\sigma\wedge B)^2$. Lemma \ref{lem:bennett} with a union bound over $2m$ signed coordinates gives, for $u\ge0$,
\begin{equation*}
    \Pb\left(\left\Vert {\frac{1}{n}}\sum_{i=1}^nY_i\right\Vert_\infty
    >(\sigma\wedge B)\sqrt{{\frac{2\{\log(2p)+u\}}{n}}}
    +{\frac{2\tau\{\log(2p)+u\}}{3n}}\right)\le e^{-u}.
\end{equation*}
Integrating this display gives
\begin{align}\label{eq:bounded_general}
\Eb\left\|\frac1n\sum_{i=1}^nY_i\right\|_\infty
    \leq
    (\sigma\wedge B)\sqrt{\frac{2\log(2p)}{n}}+\frac{2\tau\log(2p)}{3n}
    \nonumber\\
    +
    \int_0^\infty e^{-u}
    \left\{
    \frac{\sigma\wedge B}{\sqrt{2n\{\log(2p)+u\}}}
    +\frac{2\tau}{3n}
    \right\}\,du \nonumber\\
    \leq
    \left(\sqrt2+\frac{1}{\sqrt2\log(2p)}\right)
    (\sigma\wedge B)\sqrt{\frac{\log(2p)}{n}} +
    \frac23\left(1+\frac1{\log(2p)}\right)
    B\left(\frac{\log(2p)}{n}\right)^{1-1/q} \nonumber\\
    \leq
    \frac52(\sigma\wedge B)\sqrt{\frac{\log(2p)}{n}}
    +
    \frac53B\left(\frac{\log(2p)}{n}\right)^{1-1/q},
\end{align}
where the last inequality uses $\log(2p)\geq\log2$. Moreover,
\begin{equation*}
    \Eb\left\|\frac1n\sum_{i=1}^nV_i\right\|_\infty
    \leq
    2B\left(\frac{\log(2p)}{n}\right)^{1-1/q}.
\end{equation*}
Combining these bounds with the triangle inequality proves
\begin{equation*}
    \Ec_{n,p,1,q}(\sigma,B)
    \leq
    \min\left\{
    B,\,
    \frac52(\sigma\wedge B)\sqrt{\frac{\log(2p)}{n}}
    +
    \frac{11}{3}B\left(\frac{\log(2p)}{n}\right)^{1-1/q}
    \right\}.
\end{equation*}
Moreover,
\begin{equation}\label{eq:tail_general}
    \Eb\left\Vert {\frac{1}{n}}\sum_{i=1}^nV_i\right\Vert_\infty
    \le {\frac{2}{n}}\sum_{i=1}^n\Eb[M_i\mathbf{1}\{M_i>\tau\}]
    \le 2B^q\tau^{1-q}
    =2B\left({\frac{\log(2p)}{n}}\right)^{1-1/q}.
\end{equation}
Combining \eqref{eq:trivial_B}, \eqref{eq:bounded_general}, and \eqref{eq:tail_general} proves \eqref{eq:sup_upper_general}.

For \eqref{eq:sup_upper_log}, put
\begin{equation*}
    \Delta=
    \log\left\{ {\frac{B^2}{(\sigma\wedge B)^2}}
    \left({\frac{\log(2p)}{n}}\right)^{1-2/q}\right\}>1
\end{equation*}
and truncate at $\tau=B(n\Delta/\log(2p))^{1/q}$. The tail calculation gives
\begin{equation}\label{eq:tail_log}
    \Eb\left\Vert {\frac{1}{n}}\sum_{i=1}^nV_i\right\Vert_\infty
    \le 2B\left({\frac{\log(2p)}{n\Delta}}\right)^{1-1/q}.
\end{equation}
In order to apply Lemma~\ref{lem:bennett}, we note that $K=2\tau$ and
\begin{equation*}
    (\sigma\wedge B)^2
    =B^2\left({\frac{\log(2p)}{n}}\right)^{1-2/q}e^{-\Delta}.
\end{equation*}
For every $v\ge1$,
\begin{equation*}
    {\frac{K\,3vB\{\log(2p)/(n\Delta)\}^{1-1/q}}{(\sigma\wedge B)^2}}
    =6ve^\Delta\Delta^{2/q-1}\ge1.
\end{equation*}
Since $h(x)\ge x\{\log(1+x)-1\}$ and
\begin{equation*}
    \log(1+6ve^\Delta\Delta^{2/q-1})-1
    \ge \Delta-\log\Delta+\log 6-1
    \ge {\frac{5\Delta}{6}},
\end{equation*}
where the last inequality is due to $\frac{x}{6}-\log x + \log 6- 1\geq 0$ for $x\geq 1$, Bennett's inequality and the union bound yield
\begin{equation*}
    \Pb\left(\left\Vert {\frac{1}{n}}\sum_{i=1}^nY_i\right\Vert_\infty
    >3vB\left({\frac{\log(2p)}{n\Delta}}\right)^{1-1/q}\right)
    \le \exp\{-(5v/4-1)\log(2p)\}.
\end{equation*}
Integrating over $v\ge1$ gives
\begin{equation}\label{eq:bounded_log}
    \Eb\left\Vert {\frac{1}{n}}\sum_{i=1}^nY_i\right\Vert_\infty
    \le 6B\left({\frac{\log(2p)}{n\Delta}}\right)^{1-1/q}.
\end{equation}
Combining \eqref{eq:trivial_B}, \eqref{eq:tail_log}, and \eqref{eq:bounded_log} proves \eqref{eq:sup_upper_log}.
\end{proof}

\begin{proof}[Proof of Theorem~\ref{thm:maximal_lower}] 
We first prove \eqref{eq:maximal_lower_small}. Suppose that $\log(2p)\le n$ and take
$X_i(j)=(\sigma\wedge B)\varepsilon_{ij}$ for independent Rademacher variables $\epsilon_{ij}$ ($1\leq i\leq n$ and $1\leq j\leq p$). It is clear that this construction is admissible and Theorem~2.1 of \cite{chang2026notesconstantsmaximarademacher} implies that for all $n,p\geq 1$,
\begin{equation*}
    \Eb\max_{1\le j\le p}\Abs{\frac{1}{n}\sum_{i=1}^n X_i(j)}=(\sigma\wedge B)\Eb\max_{1\le j\le p}\Abs{\frac{1}{n}\sum_{i=1}^n\epsilon_{ij}}\geq \frac{\sigma\wedge B}{\sqrt{2\log 2}}\sqrt{\frac{\log(2p)}{n}}.
\end{equation*} In fact, $(\sqrt{2\log 2})^{-1}$ is the largest numerical constant possible. Next, suppose that $\log(2p)>n$, let
\begin{equation*}
A=\min\left\{B,(\sigma\wedge B)\sqrt{\frac{\log(2p)}{n}}\right\},
\qquad
\eta={\frac{(\sigma\wedge B)^2}{A^2}}\leq 1.
\end{equation*}
Let $X_i(j)=A R_{ij}\varepsilon_{ij}$, where the $R_{ij}$ are Bernoulli$(\eta)$ and the $\varepsilon_{ij}$ are independent Rademacher variables. Then $\Eb X_i(j)=0$, $\Eb X_i(j)^2=(\sigma\wedge B)^2$, and $\max_j|X_i(j)|\le A\le B$. For a fixed $j\in[p]$, the event that
$R_{1j}=\cdots=R_{nj}=1$ and the signs
$\varepsilon_{1j},\ldots,\varepsilon_{nj}$ are all equal has probability $2(\eta/2)^n$ and implies
$n^{-1}\sum_iX_i(j)=A$. Hence
\begin{equation*}
    \Eb\left\Vert {\frac{1}{n}}\sum_{i=1}^nX_i\right\Vert_\infty \ge A\left[1-\{1-2(\eta/2)^n\}^p\right].
\end{equation*}
If $A=(\sigma\wedge B)\sqrt{\log(2p)/n}$, then $\eta=n/\log(2p)$. If $A=B$, then $\eta\ge {\frac{n}{\log(2p)}}.$ Therefore, in both cases, 
\begin{equation*}
    1-\{1-2(\eta/2)^n\}^p\geq 1-
\left\{
1-2\left(\frac{n}{2\log(2p)}\right)^n
\right\}^p.
\end{equation*} We claim that whenever $\log(2p)>n$,
\begin{equation*}
    1-
\left\{
1-2\left(\frac{n}{2\log(2p)}\right)^n
\right\}^p \geq 1-\left(1-\frac{2}{\{\log 12\}^2}\right)^6=:c_0>0.9.
\end{equation*} For $n\ge3$,
\begin{equation*}
    2p\left(\frac{n}{2\log(2p)}\right)^n
=
\exp\left[
n\left\{
\frac{\log(2p)}{n}
-\log\left(\frac{2\log(2p)}{n}\right)
\right\}
\right]
\ge
\left(\frac e2\right)^n
\ge
\left(\frac e2\right)^3 .
\end{equation*} Hence
\begin{equation*}
    1-
\left\{
1-2\left(\frac{n}{2\log(2p)}\right)^n
\right\}^p
\ge
1-\exp\left\{-\left(\frac e2\right)^3\right\}
>c_0
\end{equation*} For $n=1$, necessarily $p\ge2$, and
\[
1-\left(1-\frac1{\log(2p)}\right)^p\ge c_0 .
\]
Indeed, for $p\ge7$ this follows from the monotonicity of
$p/\log(2p)$ and the bound $1-x\le e^{-x}$; the remaining cases $2\le p\le6$ are checked directly, with the minimum attained at $p=3$, where the value is
\[
1-\left(1-\frac1{\log 6}\right)^3
>
c_0 .
\]
For $n=2$, necessarily $p\ge4$, and
\[
1-\left(1-\frac{2}{\{\log(2p)\}^2}\right)^p\ge c_0 .
\]
For $p\ge12$, this follows from the monotonicity of
$p/\{\log(2p)\}^2$ on $\log(2p)>2$ and the bound $1-x\le e^{-x}$.
For $4\le p\le11$, direct evaluation gives the minimum at $p=6$,
which is exactly $c_0$. Thus
\begin{equation*}
    \Eb\left\Vert \frac1n\sum_{i=1}^n X_i\right\Vert_\infty
\ge c_0 A > 0.9A\geq \frac{1}{\sqrt{2\log 2}}A.
\end{equation*}

Next, we prove \eqref{eq:maximal_lower_large} under the assumption that $\log\Lambda_q(1)>1$ and $\log(2p)\geq \log\Lambda_q(1)$. First suppose that $\log(2p)/n\ge\log\Lambda_q(1)$. Let $Y_{ij}$ be iid
Bernoulli$(\rho)$ variables and set
\begin{equation*}
    X_i(j)=B(Y_{ij}-\rho),
\qquad
\rho=\min\left\{ {\frac{1}{4}},{\frac{(\sigma\wedge B)^2}{B^2}}\right\}.
\end{equation*} Then $\Eb X_i(j)=0$, $\Eb X_i(j)^2\le(\sigma\wedge B)^2$, and
$\max_j|X_i(j)|\le B$. If $\rho<1/4$, then
\begin{equation*}
\log\rho=-\log\Lambda_q(1)+\left(1-{\frac{2}{q}}\right)\log{\frac{\log(2p)}{n}}\ge-\log\Lambda_q(1).
\end{equation*}
Therefore $p\rho^n\ge1/2$. With probability at least $1-e^{-1/2}$, there is
a coordinate for which $Y_{1j}=\cdots=Y_{nj}=1$, and on this event
\begin{equation*}
    {\frac{1}{n}}\sum_{i=1}^nX_i(j)=B(1-\rho)\ge {\frac{3B}{4}}.
\end{equation*}
Hence the desired lower bound follows in this case. If $\rho=1/4$, then the event that the first $\lceil n/2\rceil$ variables in
a coordinate equal one implies
\begin{equation*}
    {\frac{1}{n}}\sum_{i=1}^nX_i(j)\ge {\frac{B}{4}}.
\end{equation*} Since $\log\Lambda_q(1)>1$ and $\log(2p)/n\ge\log\Lambda_q(1)$, we have $\log(2p)>n$, and hence
\begin{equation*}
p4^{-\lceil n/2\rceil}\ge {\frac{e}{8}}.
\end{equation*}
Thus the preceding event occurs for some coordinate with probability at least
$1-\exp(-e/8)$. This again gives the desired lower bound.

It remains to consider the case $\log(2p)/n<\log\Lambda_q(1)$. Let $R_i$ be iid
Bernoulli variables with mean $\log(2p)/(n\log\Lambda_q(1))$, let $Y_{ij}$ be iid
Bernoulli variables independent of the $R_i$'s, and define
\begin{equation*}
    N=\sum_{i=1}^nR_i,
\qquad
Z_j=\sum_{i=1}^nR_iY_{ij}.
\end{equation*}Then $\Eb N={\frac{\log(2p)}{\log\Lambda_q(1)}}\ge1.$ Assume first that $\Eb N\ge8$. Set
\begin{equation*}
    \Eb Y_{ij}={\frac{\log\Lambda_q(1)^{1-2/q}}{2\Lambda_q(1)}},\quad X_i(j)=
B\left({\frac{\log(2p)}{n\log\Lambda_q(1)}}\right)^{-1/q}
R_i(Y_{ij}-\Eb Y_{ij}).
\end{equation*}
The envelope moment is at most $B^q$, because
\begin{equation*}
    \Eb\left|B\left({\frac{\log(2p)}{n\log\Lambda_q(1)}}\right)^{-1/q}R_i\right|^q=B^q.
\end{equation*}Also, the variance is at most
\begin{equation*}
    B^2\left({\frac{\log(2p)}{n\log\Lambda_q(1)}}\right)^{1-2/q} {\frac{\log\Lambda_q(1)^{1-2/q}}{2\Lambda_q(1)}} ={\frac{1}{2}}B^2\left({\frac{\log(2p)}{n}}\right)^{1-2/q}/\Lambda_q(1)={\frac{1}{2}}(\sigma\wedge B)^2.
\end{equation*}
We shall use the following consequence of Theorem~1 of \cite{zubkov2012full}: if $N$ is binomial and
$\Eb N\ge8$, then
\begin{equation}\label{eq:ZS_numeric}
\Pb\left(
\left\lceil {\frac{2\Eb N}{5}}\right\rceil
\le N\le
\left\lfloor {\frac{4\Eb N}{3}}\right\rfloor
\right)>0.63.
\end{equation} To prove \eqref{eq:ZS_numeric}, write $N\sim{\rm Bin}(n,p)$ and $\Eb N=np$. \cite{zubkov2012full} implies that, for $0<a<1$,
\begin{equation*}
    \Pb(N\le a\Eb N)
    \le
    \Phi\left(-\sqrt{2\Eb N\{a\log a-a+1\}}\right),
\end{equation*}
and, for $a>1$,
\begin{equation*}
    \Pb(N\ge a\Eb N)
    \le
    \Phi\left(-\sqrt{2\Eb N\{a\log a-a+1\}}\right).
\end{equation*}
Since $\Eb N\ge8$, $\lceil {2\Eb N/5}\rceil-1< {2\Eb N/5}$ and hence
\begin{equation*}
    \Pb\left(N<
    \left\lceil {\frac{2\Eb N}{5}}\right\rceil\right)
    \le
    \Pb\left(N\le {\frac{2\Eb N}{5}}\right)\le
    \Phi\left(
    -\sqrt{16\left\{
    {\frac{2}{5}}\log{\frac{2}{5}}+{\frac{3}{5}}
    \right\}}
    \right).
\end{equation*}
Similarly, if $\lfloor4\Eb N/3\rfloor\ge n$, then the upper tail is zero.
Otherwise, $\lfloor {4\Eb N/3}\rfloor+1> {4\Eb N/3},$ and so
\begin{equation*}
    \Pb\left(N>
    \left\lfloor {\frac{4\Eb N}{3}}\right\rfloor\right)
    \le
    \Pb\left(N\ge {\frac{4\Eb N}{3}}\right)\le
    \Phi\left(
    -\sqrt{16\left\{
    {\frac{4}{3}}\log{\frac{4}{3}}-{\frac{1}{3}}
    \right\}}
    \right).
\end{equation*}
Therefore,
\begin{align*}
    \Pb\left(
    \left\lceil {\frac{2\Eb N}{5}}\right\rceil
    \le N\le
    \left\lfloor {\frac{4\Eb N}{3}}\right\rfloor
    \right)
    \ge 1-\Phi\left(
    -\sqrt{16\left\{
    {\frac{4}{3}}\log{\frac{4}{3}}-{\frac{1}{3}}
    \right\}}
    \right)\\
    -\Phi\left(
    -\sqrt{16\left\{
    {\frac{4}{3}}\log{\frac{4}{3}}-{\frac{1}{3}}
    \right\}}
    \right)
    >0.63.
\end{align*}

On this event set $r=\lfloor2\Eb N/5\rfloor$. Then $r\le N$ and
$r\ge3\Eb N/10$. Conditionally on the $R_i$'s,
\begin{align*}
p\Pb(Z_j\ge r\mid R_1,\ldots,R_n)
&\ge
m(\Eb Y_{ij})^r(1-\Eb Y_{ij})^N.
\end{align*}
Since $\Eb Y_{ij}\le1/(2e)$, $r\le2\Eb N/5$, $N\le4\Eb N/3$, and $\log(p)=\Eb N\log\Lambda_q(1)-\log2$, we have
\begin{align*}
&\log\left\{
p(\Eb Y_{ij})^r(1-\Eb Y_{ij})^N
\right\} \\
&\qquad\ge
\Eb N\log\Lambda_q(1)-\log2
-{\frac{2\Eb N}{5}}(\log\Lambda_q(1)+\log2)
+{\frac{4\Eb N}{3}}\log\left(1-{\frac{1}{2e}}\right) \\
&\qquad\ge
8\left\{
{\frac{3}{5}}
-{\frac{2\log2}{5}}
+{\frac{4}{3}}\log\left(1-{\frac{1}{2e}}\right)
\right\}
-\log2
>
-\log2.
\end{align*}
Therefore $p\Pb(Z_j\ge r\mid R_1,\ldots,R_n)\ge {\frac{1}{2}},$ and so the conditional probability that $\max_j Z_j\ge r$ is at least
$1-e^{-1/2}$. On the intersection of this event with the event in
\eqref{eq:ZS_numeric},
\begin{align*}
\max_{1\le j\le m}\left|{\frac{1}{n}}\sum_{i=1}^nX_i(j)\right|
&\ge
{\frac{B}{n}}\left({\frac{\log(2p)}{n\log\Lambda_q(1)}}\right)^{-1/q}
r\left(1-{\frac{N\Eb Y_{ij}}{r}}\right)\\
&\ge
B\left({\frac{\log(2p)}{n\log\Lambda_q(1)}}\right)^{1-1/q}
{\frac{3}{10}}
\left(1-{\frac{20}{9e}}\right).
\end{align*}
Consequently,
\begin{equation*}
\Eb\left\Vert {\frac{1}{n}}\sum_{i=1}^nX_i\right\Vert_\infty\ge
0.63(1-e^{-1/2})
{\frac{3}{10}}
\left(1-{\frac{20}{9e}}\right)
B\left({\frac{\log(2p)}{n\log\Lambda_q(1)}}\right)^{1-1/q} \ge
{\frac{1}{74}}
B\left({\frac{\log(2p)}{n\log\Lambda_q(1)}}\right)^{1-1/q}.
\end{equation*}

Assume now that $1\le\Eb N<8$. Set
\begin{equation*}
    \Eb Y_{ij}={\frac{\log\Lambda_q(1)^{1-2/q}}{12\Lambda_q(1)}}
\end{equation*} and use the same definition of $X_i(j)$. The envelope moment is again at most
$B^q$, and the variance is at most $(\sigma\wedge B)^2/12$. Also,
\begin{align*}
\Pb(1\le N\le16)
&\ge
1-e^{-\Eb N}
-
\exp\left\{
-17\log\left({\frac{17}{\Eb N}}\right)+17-\Eb N
\right\} \\
&\ge
1-e^{-1}
-
\exp\left\{
-17\log\left({\frac{17}{8}}\right)+9
\right\}.
\end{align*}
Conditionally on the $R_i$'s, whenever $N\ge1$, $\Pb(Z_j\ge1\mid R_1,\ldots,R_n)\ge \Eb Y_{ij}.$ Since $\log(2p)\ge\log\Lambda_q(1)$,
\begin{equation*}
    p\Eb Y_{ij}={\frac{1}{24}} \exp\{\log(2p)-\log\Lambda_q(1)\}\log\Lambda_q(1)^{1-2/q}
\ge {\frac{1}{24}}.
\end{equation*} Thus the conditional probability that some $Z_j$ is at least one is at least
$1-e^{-1/24}$. On the event $1\le N\le16$ and $\max_jZ_j\ge1$,
\begin{equation*}
\max_{1\le j\le m}\left|{\frac{1}{n}}\sum_{i=1}^nX_i(j)\right|
\ge
{\frac{B}{n}}
\left({\frac{\log(2p)}{n\log\Lambda_q(1)}}\right)^{-1/q}
\left(1-N\Eb Y_{ij}\right) \ge
{\frac{B}{8}}
\left({\frac{\log(2p)}{n\log\Lambda_q(1)}}\right)^{1-1/q}
\left(1-{\frac{4}{3e}}\right),
\end{equation*}
where we used $\Eb N<8$ and $\Eb Y_{ij}\le1/(12e)$. Therefore
\begin{align*}
\Eb\left\Vert {\frac{1}{n}}\sum_{i=1}^nX_i\right\Vert_\infty\ge
\left[
1-e^{-1}
-
\exp\left\{
-17\log\left({\frac{17}{8}}\right)+9
\right\}
\right]
(1-e^{-1/24})
{\frac{1}{8}}
\left(1-{\frac{4}{3e}}\right)\\
\times
B\left({\frac{\log(2p)}{n\log\Lambda_q(1)}}\right)^{1-1/q}\ge
{\frac{1}{640}}
B\left({\frac{\log(2p)}{n\log\Lambda_q(1)}}\right)^{1-1/q}.
\end{align*}
Combining the cases proves \eqref{eq:maximal_lower_large}.
\end{proof}

\begin{proof}[Proof of Corollary~\ref{cor:upper}]
Let $P^n\in\Pc_{n,p}(q,\sigma,B)$ and put $S_n=n^{-1}\sum_iX_i$. Lemma \ref{lem:order} gives
\begin{equation}\label{eq:upper_start}
    \Eb\norm{S_n}_{(k),2}
    \le
    (1-e^{-1})^{-1}\sum_{s=0}^{\ceil{\log_2 k}}
    \sqrt{m_s}\,\Eb\left[\Eb\left(\max_{j\in J_{m_s}}|S_n(j)|\mid J_{m_s}\right)\right],
\end{equation} where $m_s = 2^s\wedge k$ and $J_m$ is uniformly distributed over subsets of $[p]$ of cardinality $\ceil{p/m}$. For every realization of $J_{m_s}$, the subvectors $(X_i(j))_{j\in J_{m_s}}$ satisfy the same variance and envelope constraints in dimension $\ceil{p/m_s}$. Since
\begin{equation*}
    \log(2p/m_s)\le \log(2\ceil{p/m_s})\le \log(4p/m_s)\le2\log(2p/m_s),
\end{equation*}
Proposition \ref{prop:sup_upper} and Lemma \ref{lem:sum} give
\begin{equation*}
    \Eb\norm{S_n}_{(k),2}
    \le 8\sqrt{k}B
\end{equation*}
and
\begin{align*}
    \Eb\norm{S_n}_{(k),2}
    &\le
    90\sqrt{k}(\sigma\wedge B)\sqrt{\frac{\log(2p/k)}{n}}
    +187\sqrt{k}B\left({\frac{\log(2p/k)}{n}}\right)^{1-1/q}.
\end{align*}
If $\log\Lambda_q(k)\le1$, then
\begin{equation*}
    B\left({\frac{\log(2p/k)}{n}}\right)^{1-1/q}
    =(\sigma\wedge B)\sqrt{\frac{\log(2p/k)}{n}}\,\Lambda_q(k)^{1/2}
    \le e^{1/2}(\sigma\wedge B)\sqrt{\frac{\log(2p/k)}{n}}.
\end{equation*}

Assume now that $\log\Lambda_q(k)>1$. Put
\begin{equation*}
    \Delta_s
    =
    \log\left\{
        {\frac{B^2}{(\sigma\wedge B)^2}}
        \left({\frac{\log(2p/m_s)}{n}}\right)^{1-2/q}
    \right\},
    \quad
    \Delta_s^\circ
    =
    \log\left\{
        {\frac{B^2}{(\sigma\wedge B)^2}}
        \left({\frac{\log(2\ceil{p/m_s})}{n}}\right)^{1-2/q}
    \right\}.
\end{equation*}
Since $m_s\leq k$ and $1-2/q\geq0$, we have $\log(2p/m_s)\geq\log(2p/k),$ and hence
\begin{equation*}
    \Delta_s
    \geq
    \log\left\{
        {\frac{B^2}{(\sigma\wedge B)^2}}
        \left({\frac{\log(2p/k)}{n}}\right)^{1-2/q}
    \right\}
    =
    \log\Lambda_q(k)>1.
\end{equation*}
Moreover, since $\ceil{p/m_s}\geq p/m_s$, we have $\Delta_s^\circ\geq\Delta_s>1.$ Thus the bound in \eqref{eq:sup_upper_log} applies to
each conditional subvector appearing in \eqref{eq:upper_start}. For the comparison of the summands, note also that
\begin{equation*}
    \log(2\ceil{p/m_s})
    \leq
    \log(2p/m_s+2)
    \leq
    \log(4p/m_s)
    \leq
    2\log(2p/m_s)
    =
    2\log(2p/m_s).
\end{equation*}
Since $\Delta_s^\circ\geq\Delta_s$, it follows that
\begin{equation*}
    \left(
        {\frac{\log(2\ceil{p/m_s})}{n\Delta_s^\circ}}
    \right)^{1-1/q}\leq 2^{1-1/q}
    \left(
        {\frac{\log(2p/m_s)}{n\Delta_s}}
    \right)^{1-1/q}\leq 2
    \left(
        {\frac{\log(2p/m_s)}{n\Delta_s}}
    \right)^{1-1/q}.
\end{equation*} Applying \eqref{eq:sup_upper_log} in \eqref{eq:upper_start} gives
\begin{align*}
    \Eb\norm{S_n}_{(k),2}
    &\leq
    (1-e^{-1})^{-1}
    \sum_{s=0}^S
    \sqrt{m_s}\,
    8B
    \left(
        {\frac{\log(2\ceil{p/m_s})}{n\Delta_s^\circ}}
    \right)^{1-1/q} \\
    &\leq
    16(1-e^{-1})^{-1}B
    \sum_{s=0}^S
    \sqrt{m_s}
    \left(
        {\frac{\log(2p/m_s)}{n\Delta_s}}
    \right)^{1-1/q}.
\end{align*}
By \eqref{eq:sum_log_delta},
\begin{equation*}
    \sum_{s=0}^S
    \sqrt{m_s}
    \left(
        {\frac{\log(2p/m_s)}{n\Delta_s}}
    \right)^{1-1/q}
    \leq
    (9+5\sqrt{2})\sqrt{k}
    \left(
        {\frac{\log(2p/k)}{n\log\Lambda_q(k)}}
    \right)^{1-1/q}.
\end{equation*}
Hence, we conclude that
\begin{equation*}
    \Eb\norm{S_n}_{(k),2}\leq 
    16 (1-e^{-1})^{-1}(9+5\sqrt{2})
    \sqrt{k}B
    \left(
        {\frac{\log(2p/k)}{n\log\Lambda_q(k)}}
    \right)^{1-1/q} \leq
    407\sqrt{k}B
    \left(
        {\frac{\log(2p/k)}{n\log\Lambda_q(k)}}
    \right)^{1-1/q}.
\end{equation*}
Together with the bound $\Eb\norm{S_n}_{(k),2}\le\sqrt{k}B$ and taking the supremum over $P^n$ completes the proof of Corollary~\ref{cor:upper}.
\end{proof}

\begin{proof}[Proof of Corollary~\ref{cor:lower}]
Let $p_k=\ceil{p/k}$. If $\log\Lambda_q(k)\le1$, Theorem~\ref{thm:maximal_lower} and Lemma \ref{lem:block} give
\begin{align*}
    &\Ec^*_{n,p,k,q}(\sigma,B)
    \ge {\sqrt{\frac{k}{2}}}\Ec^*_{n,p_k,1,q}(\sigma,B) \\
    &\ge {\frac{\sqrt{k}}{8\sqrt{2}}}
    \min\left\{B,
    (\sigma\wedge B)\sqrt{\frac{\log(2p_k)}{n}}\right\}\ge {\frac{\sqrt{k}}{8\sqrt{2}}}
    \min\left\{B,
    (\sigma\wedge B)\sqrt{\frac{\log(2p/k)}{n}}\right\}.
\end{align*}
This proves \eqref{eq:main_lower_small} after decreasing the constant.

Assume next that $\log\Lambda_q(k)>1$ and $\log(2p/k)\geq \log\Lambda_q(k)$ holds. Hence, an application of Theorem~\ref{thm:maximal_lower} with the dimension $p_k$, combined with Lemma~\ref{lem:block}, gives
\begin{align*}
    \Ec^*_{n,p,k,q}(\sigma,B)
    &\ge {\sqrt{\frac{k}{2}}}\Ec^*_{n,p_k,1,q}(\sigma,B)\\
    &\ge {\frac{\sqrt{k}}{640\sqrt{2}}}
    \min\left\{B,
    B\left(
    {\frac{\log(2p_k)}{n\log\left\{{\frac{B^2}{(\sigma\wedge B)^2}}
    (\log(2p_k)/n)^{1-2/q}\right\}}}
    \right)^{1-1/q}\right\}  \\
    &\ge {\frac{\sqrt{k}}{1280\sqrt{2}}}
    \min\left\{B,
    B\left({\frac{\log(2p/k)}{n\log\Lambda_q(k)}}\right)^{1-1/q}\right\},
\end{align*}
where the second inequality follows from \eqref{eq:scale_preserved}. Applying \eqref{eq:block_amp} proves \eqref{eq:main_lower_large} after decreasing the constant, and completes the proof of Corollary~\ref{cor:lower}.
\end{proof}

\begin{proof}[Proof of Proposition \ref{prop:sharpness}]
It suffices to take $n=1$, $B=1$, and $k=1$. For any admissible law with $\sigma\le1$,
\begin{equation}\label{eq:sharp_upper}
    \Eb\norm{X}_\infty
    \le \min\left\{1,\left(\Eb\sum_{j=1}^pX(j)^2\right)^{1/2}\right\}
    \le \min\{1,\sigma\sqrt p\}.
\end{equation}
Choose $c_1\in(c,1)$ and integers $p_r$ such that $\log(2p_r)/r\to c_1$. Put
\begin{equation*}
    \sigma_r^2=\{\log(2p_r)\}^{1-2/q}e^{-r},
\end{equation*}
where the power is zero when $q=2$. Then $\Lambda_q(1)=e^r$ and $\log(2p_r)\ge c\log\Lambda_q(1)$ for all sufficiently large $r$. By \eqref{eq:sharp_upper},
\begin{equation*}
    \Ec^*_{1,p_r,1,q}(\sigma_r,1)
    \le \sigma_r\sqrt{p_r} \le
    \{\log(2p_r)\}^{(1-2/q)/2}
    \exp\left\{ {\frac{\log(2p_r)-r}{2}}\right\}
    \to0.
\end{equation*}
On the other hand,
\begin{equation*}
    \min\left\{1,
    \left({\frac{\log(2p_r)}{\log\Lambda_q(1)}}\right)^{1-1/q}\right\}
    \to c_1^{1-1/q}>0.
\end{equation*}
This contradiction proves the proposition.
\end{proof}

\subsection{Proofs for Section~\ref{sec:envelope-subweibull}}\label{app:proof-sub-orlicz}
\begin{proof}[proof of Theorem~\ref{thm:sub_orlicz_maximal} and Corollary~\ref{cor:sub_orlicz}]
We prove the top-$k$ statement in Corollary~\ref{cor:sub_orlicz}; Theorem~\ref{thm:sub_orlicz_maximal} is the case $k=1$. We first prove the upper bound. Let $P^n\in\Pc_{n,p}^{\psi_\alpha,\infty}(\sigma,K)$. For
$X_i\sim P_i$, put $M_i=\max_{1\leq j\leq p}|X_i(j)|.$ Then $\Pb(M_i>Kt)\leq2e^{-t^\alpha}$ for $t\geq 0$. Consequently,
\begin{align}
    \Eb M_i
    &\leq
    K+2K\Gamma(1+1/\alpha),                                      \label{eq:sub_orlicz_first_moment}\\
    \Eb M_i^2
    &\leq
    K^2+4K^2\Gamma(1+2/\alpha).                                  \label{eq:sub_orlicz_second_moment}
\end{align}
Since $\Gamma(1+2/\alpha)\geq1/2$ and
$\Gamma(1+1/\alpha)^2\leq\Gamma(1+2/\alpha)$,
\eqref{eq:sub_orlicz_first_moment} gives
\begin{equation}\label{eq:sub_orlicz_trivial_k}
    \Eb\left\|\frac1n\sum_{i=1}^nX_i\right\|_{(k),2}
    \leq
    \sqrt{k}\{K+2K\Gamma(1+1/\alpha)\}
    \leq
    4\sqrt{\Gamma(1+2/\alpha)}\sqrt{k}K.
\end{equation}
We record the corresponding sup-norm bound in
dimension $m$. Suppose $X_i\in\Real^m$, $\Eb X_i=0_m$,
\begin{equation*}
    \max_{1\leq j\leq m}
    \frac1n\sum_{i=1}^n\Eb X_i(j)^2\leq \sigma^2,
    \qquad
    \max_{1\leq i\leq n}
    \norm{\max_{1\leq j\leq m}|X_i(j)|}_{\psi_\alpha}\leq K .
\end{equation*}
If $\log(2p)>n$, then \eqref{eq:sub_orlicz_trivial_k} with $k=1$ is enough. Assume
$\log(2p)\leq n$ and set
\begin{equation*}
    \tau
    =
    K\left[
    \log\left(e+\frac{n}{\log(2p)}\right)
    \right]^{1/\alpha}.
\end{equation*}
For each coordinate, decompose 
\begin{equation*}
    X_i(j) = \big(X_i(j)\mathbf 1\{M_i\leq\tau\}-\Eb X_i(j)\mathbf 1\{M_i\leq\tau\}\big) + \big(X_i(j)\mathbf 1\{M_i>\tau\} - \Eb X_i(j)\mathbf 1\{M_i>\tau\}\big).
\end{equation*} The absolute value of the former term is bounded by $2\tau$, and by
\eqref{eq:sub_orlicz_second_moment} its coordinate-wise variance is at most $\sigma^2\wedge \{K^2+4K^2\Gamma(1+2/\alpha)\}.$ Then Bennett's inequality in Lemma~\ref{lem:bennett}, followed by a union bound over the $2m$ signed coordinates and integration in the tail parameter, gives
\begin{align}
    &\Eb\left\|
    \frac1n\sum_{i=1}^n
    \left\{
    X_i\mathbf 1\{M_i\leq\tau\}
    -
    \Eb X_i\mathbf 1\{M_i\leq\tau\}
    \right\}
    \right\|_\infty                                                        \notag\\
    &\leq
    \frac52
    \left[
    \sigma\wedge K\{1+4\Gamma(1+2/\alpha)\}^{1/2}
    \right]
    \sqrt{\frac{\log(2p)}{n}} +
    \frac53K\frac{\log(2p)}{n}
    \left[
    \log\left(e+\frac{n}{\log(2p)}\right)
    \right]^{1/\alpha}.                                           \label{eq:sub_orlicz_bounded_part}
\end{align}
For the tail part,
\begin{align*}
    \Eb\left[M_i\mathbf 1\{M_i>\tau\}\right]
    &\leq
    2\tau\exp\left\{-\left(\frac{\tau}{K}\right)^\alpha\right\}
    +
    \frac{2K}{\alpha}
    \int_{(\tau/K)^\alpha}^\infty s^{1/\alpha-1}e^{-s}\,ds .
\end{align*}
For $x\geq1$,
\begin{equation*}
    \int_x^\infty s^{1/\alpha-1}e^{-s}\,ds
    \leq
    x^{1/\alpha}e^{-x}
    \int_0^\infty(1+y)^{1/\alpha-1}e^{-y}\,dy .
\end{equation*}
Since $\log(2p)\leq n$,
\begin{equation*}
    \exp\left\{
    -\log\left(e+\frac{n}{\log(2p)}\right)
    \right\}
    \leq
    \frac{\log(2p)}{n}.
\end{equation*}
Hence
\begin{align}
    \Eb\left[M_i\mathbf 1\{M_i>\tau\}\right]
    &\leq
    \left[
    2+
    \frac{2}{\alpha}
    \int_0^\infty(1+y)^{1/\alpha-1}e^{-y}\,dy
    \right]
    K\frac{\log(2p)}{n}
    \left[
    \log\left(e+\frac{n}{\log(2p)}\right)
    \right]^{1/\alpha}.                                           \label{eq:sub_orlicz_tail_part}
\end{align}
If $\alpha\leq1$, then
\begin{equation*}
    \frac1\alpha
    \int_0^\infty(1+y)^{1/\alpha-1}e^{-y}\,dy
    \leq
    e\Gamma(1+1/\alpha)
    \leq
    e\sqrt{\Gamma(1+2/\alpha)}.
\end{equation*}
If $\alpha>1$, the same term is at most $1$, and this is bounded by
$\sqrt2\sqrt{\Gamma(1+2/\alpha)}$. Thus the right hand side of
\eqref{eq:sub_orlicz_tail_part} is bounded by
\begin{equation*}
    8e\sqrt{\Gamma(1+2/\alpha)}
    K\frac{\log(2p)}{n}
    \left[
    \log\left(e+\frac{n}{\log(2p)}\right)
    \right]^{1/\alpha}.
\end{equation*}
Combining this with \eqref{eq:sub_orlicz_bounded_part}, and using
\begin{equation*}
    \{1+4\Gamma(1+2/\alpha)\}^{1/2}
    \leq
    \sqrt6\sqrt{\Gamma(1+2/\alpha)},
\end{equation*}
gives, for all $m\geq1$,
\begin{align}
    \Eb\left\|
    \frac1n\sum_{i=1}^nX_i
    \right\|_\infty
    &\leq
    60\sqrt{\Gamma(1+2/\alpha)}
    \min\Bigg\{
    K,
    \max\bigg\{
    (\sigma\wedge K)\sqrt{\frac{\log(2p)}{n}},                     \notag\\
    &\hspace{4.2cm}
    K\frac{\log(2p)}{n}
    \left[
    \log\left(e+\frac{n}{\log(2p)}\right)
    \right]^{1/\alpha}
    \bigg\}
    \Bigg\}.
    \label{eq:sub_orlicz_sup_bound}
\end{align}

We now pass from the sup-norm to $\norm{\cdot}_{(k),2}$. Lemma~\ref{lem:order} and
\eqref{eq:sub_orlicz_sup_bound}, applied conditionally to the random coordinate blocks,
give
\begin{align*}
    &\Eb\left\|
    \frac1n\sum_{i=1}^nX_i
    \right\|_{(k),2}
    \leq
    60(1-e^{-1})^{-1}\sqrt{\Gamma(1+2/\alpha)}
    \sum_{s=0}^{\ceil{\log_2 k}}
    \sqrt{m_s}\times
    \min\Bigg\{
    K,\\
    &\max\bigg\{
    (\sigma\wedge K)\sqrt{\frac{\log(2\ceil{p/m_s})}{n}},
    K\frac{\log(2\ceil{p/m_s})}{n}
    \left[
    \log\left(e+\frac{n}{\log(2\ceil{p/m_s})}\right)
    \right]^{1/\alpha}
    \bigg\}
    \Bigg\},
\end{align*}
where $m_s=2^s\wedge k$. Combining the preceding display with the upper bound in
\eqref{eq:sub_orlicz_trivial_k}, and using
\begin{equation*}
    \log(2p/m_s)
    \leq
    \log(2\ceil{p/m_s})
    \leq
    2\log(2p/m_s),
\end{equation*}
together with the summation bounds in Lemma~\ref{lem:sum}, gives
\begin{align*}
    \Eb\left\|
    \frac1n\sum_{i=1}^nX_i
    \right\|_{(k),2}
    &\leq
    4000\sqrt{\Gamma(1+2/\alpha)}\sqrt{k}
    \min\Bigg\{
    K,
    \max\bigg\{
    (\sigma\wedge K)\sqrt{\frac{\log(2p/k)}{n}},\\
    &\hspace{4.2cm}
    K\frac{\log(2p/k)}{n}
    \left[
    \log\left(e+\frac{n}{\log(2p/k)}\right)
    \right]^{1/\alpha}
    \bigg\}
    \Bigg\}.
\end{align*}
This proves \eqref{eq:sub_orlicz_upper_readable}.

We prove the lower bound. The variance term follows from the Rademacher construction.
Let
\begin{equation*}
    X_i(j)=(\sigma\wedge K)(\log2)^{1/\alpha}\varepsilon_{ij},
    \qquad 1\leq i\leq n,
    \quad 1\leq j\leq p,
\end{equation*}
where the $\varepsilon_{ij}$'s are independent Rademacher variables. Then
$\max_j\Eb X_i(j)^2\leq\sigma^2$ and $\norm{\max_{1\leq j\leq p}|X_i(j)|}_{\psi_\alpha}\leq K.$ Lemma~\ref{lem:block}, together with the $k=1$ Rademacher lower bound, gives
\begin{align}
    \Ec_{n,p,k,\psi_\alpha}^{\infty,*}(\sigma,K)
    &\geq
    \frac{(\log2)^{1/\alpha}\sqrt{k}}{8\sqrt2}
    \min\left\{
    K,
    (\sigma\wedge K)\sqrt{\frac{\log(2p/k)}{n}}
    \right\}\nonumber\\
    &\geq \frac{2^{-1/\alpha}\sqrt{k}}{8\sqrt2}
    \min\left\{
    K,
    (\sigma\wedge K)\sqrt{\frac{\log(2p/k)}{n}}
    \right\}.
    \label{eq:sub_orlicz_variance_lower_sparse}
\end{align}
Meanwhile, by Lemma~\ref{lem:block}, it is enough to
construct an iid law in dimension $\ceil{p/k}$; this changes the final constant by at most
the factor $\sqrt{k}/\sqrt2$. We give the construction for $k=1$. Fix
$\Delta\in\Dc_{\alpha,1}$ and put
\begin{equation*}
    \eta=\frac{\log(2p)}{n\Delta},
    \qquad
    A=K\{\log(1+\eta^{-1})\}^{1/\alpha},
    \qquad
    \rho=\frac12e^{-\Delta}.
\end{equation*}
The definition of $\Dc_{\alpha,1}$ implies $0<\eta\leq1$ and $n\eta\geq1$. Let
$R_1,\ldots,R_n$ be iid Bernoulli random variables with success probability $\eta$, and
let $Y_{ij}$, $1\leq i\leq n$, $1\leq j\leq p$, be iid Bernoulli random variables with
success probability $\rho$, independent of the $R_i$'s. Define
\begin{equation*}
    X_i(j)=AR_i(Y_{ij}-\rho).
\end{equation*}
Then $\Eb X_i(j)=0$ and $ \max_{1\leq j\leq p}|X_i(j)|\leq AR_i$. Therefore
\begin{align*}
    \Eb\exp\left[
    \left(\frac{\max_{1\leq j\leq p}|X_i(j)|}{K}\right)^\alpha
    \right]
    &\leq
    1-\eta+
    \eta\exp\{(A/K)^\alpha\}  \\
    &=
    1-\eta+
    \eta(1+\eta^{-1})
    =2.
\end{align*}
Thus, $\norm{\max_{1\leq j\leq p}|X_i(j)|}_{\psi_\alpha}\leq K$ and 
\begin{equation*}
    \max_{1\leq j\leq p}\Eb X_i(j)^2=A^2\eta\rho(1-\rho)\leq \frac12K^2\frac{\log(2p)}{n\Delta}\left[\log\left(e+\frac{n\Delta}{\log(2p)}\right)\right]^{2/\alpha}e^{-\Delta}\leq\sigma^2.
\end{equation*} Hence the law of $X$ belongs to the class $\Pc_{1,p}^{\psi_\alpha,\infty}(\sigma,K)$. Let $    N=\sum_{i=1}^nR_i.$ Conditional on $R_1,\ldots,R_n$, the random variables
\begin{equation*}
    \sum_{i=1}^nR_iY_{ij},
    \qquad 1\leq j\leq p,
\end{equation*}
are iid binomial random variables with parameters $N$ and $\rho$.

First suppose that $n\eta\geq16$. Chernoff's inequalities \citep{Chernoff1952} give
\begin{equation*}
    \Pb\left(\frac12n\eta\leq N\leq \frac43n\eta\right)
    \geq
    1-e^{-2}-e^{-16/21}.
\end{equation*}
On this event, set $m=\ceil{3n\eta/10}$. Then $m\leq29n\eta/80$ and $m\leq N$.
For $Z\sim\operatorname{Binomial}(N,\rho)$,
\begin{equation*}
    \Pb(Z\geq m)\geq \Pb(Z=m)\geq \rho^m(1-\rho)^N.
\end{equation*}
Using $\rho=e^{-\Delta}/2$, $\Delta\geq1$, and
$\log(1-x)\geq -x/(1-x)$, we obtain
\begin{align*}
    \log\{p\Pb(Z\geq m)\}
    &\geq
    \log(p)
    -
    m\Delta
    -
    m\log2
    +
    N\log(1-\rho)\\
    &\geq
    \log(p)
    -
    \frac{29}{80}\frac{\log(2p)}{\Delta}(\Delta+\log2)
    -
    \frac{2}{3(e-1/2)}\frac{\log(2p)}{\Delta}\geq -1,
\end{align*} where the last inequality follows from $\Delta\geq1$ and
\begin{equation*}
    \frac{29\log2}{80}+\frac{2}{3(e-1/2)}<\frac{51}{80}.
\end{equation*} Hence $p\Pb(Z\geq m)\geq e^{-1}$, and therefore, 
\begin{equation*}
    \Pb\left(
    \max_{1\leq j\leq p}\sum_{i=1}^nR_iY_{ij}\geq m
    \,\middle|\,
    R_1,\ldots,R_n
    \right)
    \geq
    1-e^{-e^{-1}}.
\end{equation*}
On this event,
\begin{equation*}
    \left\|
    \frac1n\sum_{i=1}^nX_i
    \right\|_\infty
    \geq
    \frac{A}{n}(m-N\rho)
    \geq
    A\eta\left(\frac3{10}-\frac{2}{3e}\right).
\end{equation*}
Thus, in the case $n\eta\geq16$,
\begin{align*}
    \Eb\left\|
    \frac1n\sum_{i=1}^nX_i
    \right\|_\infty
    &\geq
    (1-e^{-2}-e^{-16/21})(1-e^{-e^{-1}})
    \left(\frac3{10}-\frac{2}{3e}\right)A\eta.
\end{align*}

It remains to consider $1\leq n\eta<16$. Since $\Delta\leq\log(2p)$,
\begin{equation*}
    p\rho=\frac p2e^{-\Delta}\geq \frac14.
\end{equation*}
Also, $N$ is binomial with mean $n\eta\in[1,16)$. Chernoff's inequality gives
\begin{equation*}
    \Pb(1\leq N\leq32)
    \geq
    1-e^{-1}-e^{16-32\log2}.
\end{equation*}
Condition on $R_1,\ldots,R_n$ and assume $1\leq N\leq32$. If $N\rho\leq1/2$, then
\begin{equation*}
    \Pb\left(
    \max_{1\leq j\leq p}\sum_{i=1}^nR_iY_{ij}\geq1
    \,\middle|\,
    R_1,\ldots,R_n
    \right)
    =
    1-(1-\rho)^p
    \geq
    1-e^{-1/4},
\end{equation*}
and on this event
\begin{equation*}
    \left\|
    \frac1n\sum_{i=1}^nX_i
    \right\|_\infty
    \geq
    \frac{A}{n}(1-N\rho)
    \geq
    \frac{A}{2n}.
\end{equation*}
If $N\rho>1/2$, then
\begin{equation*}
    \Pb\left(
    \sum_{i=1}^nR_iY_{i1}=0
    \,\middle|\,
    R_1,\ldots,R_n
    \right)
    =
    (1-\rho)^N
    \geq
    \left(1-\frac1{2e}\right)^{32},
\end{equation*}
and on this event
\begin{equation*}
    \left\|
    \frac1n\sum_{i=1}^nX_i
    \right\|_\infty
    \geq
    \frac{AN\rho}{n}
    \geq
    \frac{A}{2n}.
\end{equation*}
Since $n\eta<16$, this gives
\begin{align*}
    \Eb\left\|
    \frac1n\sum_{i=1}^nX_i
    \right\|_\infty
    &\geq
    \frac{1-e^{-1}-e^{16-32\log2}}{32}
    \min\left\{
    1-e^{-1/4},
    \left(1-\frac1{2e}\right)^{32}
    \right\}A\eta.
\end{align*}
Combining the two cases and then applying Lemma~\ref{lem:block}, one has
\begin{equation*}
    \Eb\left\|
    \frac1n\sum_{i=1}^nX_i
    \right\|_{(k),2}\gtrsim\sqrt{k}\,
    K\frac{\log(2p/k)}{n\Delta}
    \left[
    \log\left(e+\frac{n\Delta}{\log(2p/k)}\right)
    \right]^{1/\alpha}.
\end{equation*}Taking the supremum over $\Delta\in\Dc_{\alpha,k}$ and combining with
\eqref{eq:sub_orlicz_variance_lower_sparse} proves
\eqref{eq:sub_orlicz_lower_readable}.
\end{proof}

\begin{proof}[proof of \eqref{eq:optimization}]
    By Stirling's formula \citep{Robbins1955},
\begin{equation}\label{eq:stirling}
    0<
    \inf_{q\geq2}
    \frac{
        \left\{2\Gamma\left(1+q/\alpha\right)\right\}^{1/q}
    }{q^{1/\alpha}}
    \leq
    \sup_{q\geq2}
    \frac{
        \left\{2\Gamma\left(1+q/\alpha\right)\right\}^{1/q}
    }{q^{1/\alpha}}
    <\infty .
\end{equation}
Indeed, the displayed ratio is continuous on $[2,\infty)$ and converges to
$e^{-1/\alpha}\alpha^{-1/\alpha}$ as $q\to\infty$. Hence it is enough to
prove the corresponding estimate for
\begin{equation*}
    \inf_{q\geq2} q^{1/\alpha}x^{1-1/q}.
\end{equation*}

Write $s=\log(1/x)\geq0$, then $q^{1/\alpha}x^{1-1/q}=xq^{1/\alpha}e^{s/q}.$ For every $q\geq2$, one has $q^{1/\alpha}e^{s/q}\geq 2^{1/\alpha}$ and minimizing over all $q>0$ gives
\begin{equation*}
    \inf_{q>0}q^{1/\alpha}e^{s/q}=
    (e\alpha s)^{1/\alpha},
\end{equation*} where the infimium is attained by $q=s\alpha$. Therefore, for every $q\geq2$,
\begin{equation*}
    q^{1/\alpha}e^{s/q} \geq \max\left\{2^{1/\alpha},(e\alpha s)^{1/\alpha}\right\} \geq 2^{-1/\alpha}\min\{2,e\alpha\}^{1/\alpha}(1+s)^{1/\alpha}.
\end{equation*}
Since $1+s=\log(e/x)$, this yields
\begin{align}\label{eq:lower_q_gamma_proxy}
    \inf_{q\geq2}q^{1/\alpha}x^{1-1/q}
    \geq
    2^{-1/\alpha}\min\{2,e\alpha\}^{1/\alpha}
    x\{\log(e/x)\}^{1/\alpha}\nonumber\\
    \geq2^{-1/\alpha}\min\{2,e\alpha\}^{1/\alpha}\min\left\{
        x^{1/2},
        x\{\log(e/x)\}^{1/\alpha}
    \right\}.
\end{align}

For the upper bound, first take $q=2$. This gives
\begin{equation}\label{eq:upper_q_2}
    \inf_{q\geq2}q^{1/\alpha}x^{1-1/q}
    \leq
    2^{1/\alpha}x^{1/2}.
\end{equation}
Next, if $\alpha\log(e/x)\geq2$, take $q=\alpha\log(e/x)$. Since
$\log(1/x)\leq \log(e/x)$,
\begin{align}
    \inf_{q\geq2}q^{1/\alpha}x^{1-1/q}
    &\leq
    \{\alpha\log(e/x)\}^{1/\alpha}
    x\exp\left\{
        \frac{\log(1/x)}{\alpha\log(e/x)}
    \right\} \notag \\
    &\leq
    (e\alpha)^{1/\alpha}
    x\{\log(e/x)\}^{1/\alpha}.
    \label{eq:upper_interior}
\end{align}
If $\alpha\log(e/x)<2$, then taking $q=2$ gives
\begin{align}
    \inf_{q\geq2}q^{1/\alpha}x^{1-1/q}
    &\leq
    2^{1/\alpha}x^{1/2} \notag \\
    &=
    2^{1/\alpha}
    x\{\log(e/x)\}^{1/\alpha}
    \frac{x^{-1/2}}{\{\log(e/x)\}^{1/\alpha}} \notag \\
    &\leq
    2^{1/\alpha}
    \exp\left\{\frac{(2/\alpha-1)_+}{2}\right\}
    x\{\log(e/x)\}^{1/\alpha},
    \label{eq:upper_endpoint}
\end{align}
because $\log(e/x)\geq1$ and
$\log(1/x)=\log(e/x)-1<(2/\alpha-1)_+$ in this case. Combining
\eqref{eq:upper_q_2}, \eqref{eq:upper_interior}, and
\eqref{eq:upper_endpoint}, we obtain
\begin{equation*}
    \inf_{q\geq2}q^{1/\alpha}x^{1-1/q}
    \leq
    C'_\alpha
    \min\left\{
        x^{1/2},
        x\{\log(e/x)\}^{1/\alpha}
    \right\},
\end{equation*}
where one may take
\begin{equation*}
    C'_\alpha
    =
    \max\left\{
        2^{1/\alpha},
        (e\alpha)^{1/\alpha},
        2^{1/\alpha}
        \exp\left(\frac{(2/\alpha-1)_+}{2}\right)
    \right\}.
\end{equation*}
Combining the last display, \eqref{eq:lower_q_gamma_proxy}, and \eqref{eq:stirling} proves \eqref{eq:optimization}.
\end{proof}

\subsection{Proofs for Section~\ref{sec:coordinate-wise}}\label{app:proof-coordinate-wise}
\begin{proof}[Proof of Theorem~\ref{thm:k1}]

Put, within this proof only,
\begin{equation*}
    r=1\vee\log(p),
    \qquad
    A=\log\left(e+\frac{n}{r}\right).
\end{equation*}
Then $r\geq1$, $A\geq1$, $ne^{-A}\leq r$, and $p^{1/r}\leq e$.  The sub-Weibull assumption gives, for every $u\geq1$,
\begin{equation}
    \Pp\left\{\abs{X_i(j)}>K\{(1+\log2)u\}^{1/\alpha}\right\}
    \leq
    2e^{-(1+\log2)u}
    \leq e^{-u}.
    \label{eq:tail_calibrated}
\end{equation}

First consider
\begin{align*}
    Y_i(j)
    ={}&X_i(j)\one\{\abs{X_i(j)}\leq K\{(1+\log2)A\}^{1/\alpha}\}\\
    &-\Eb X_i(j)\one\{\abs{X_i(j)}\leq K\{(1+\log2)A\}^{1/\alpha}\}.
\end{align*}
These variables are centered, their absolute values are bounded by
\begin{equation*}
    2K\{(1+\log2)A\}^{1/\alpha},
\end{equation*}
and their variance sum in each coordinate is bounded by
\begin{equation*}
    n\left[\sigma^2\wedge 2\Gamma(1+2/\alpha)K^2\right].
\end{equation*}
The second bound follows from $\Eb\abs{X_i(j)}^2\leq 2\Gamma(1+2/\alpha)K^2$.  Lemma~\ref{lem:bernstein_integral} therefore gives
\begin{align}
    \Eb\max_{1\leq j\leq p}
    \abs{\frac{1}{n}\sum_{i=1}^nY_i(j)}
    &\leq
    \left\{\sigma\wedge \bigl(2\Gamma(1+2/\alpha)\bigr)^{1/2}K\right\}
    \sqrt\frac{2}{n}
    \left\{\sqrt{\log(2p)}+\frac{1}{2\sqrt{\log(2p)}}\right\}
    \notag\\
    &\quad+
    \frac{4(1+\log2)^{1/\alpha}K}{3n}
    \{\log(2p)+1\}A^{1/\alpha}.
    \label{eq:bounded_part}
\end{align}

For the tail part, set
\begin{equation*}
    U_i(j)=\abs{X_i(j)}
    \one\{\abs{X_i(j)}>K\{(1+\log2)A\}^{1/\alpha}\}.
\end{equation*}
For each fixed $j$, the variables $U_i(j)$ are stochastically dominated by
\begin{equation*}
    K(1+\log2)^{1/\alpha}\eta_i(A+\xi_i)^{1/\alpha},
\end{equation*}
where $\eta_i$ are independent Bernoulli variables with mean $e^{-A}$, $\xi_i$ are independent standard exponential variables, and the two families are independent.  Indeed, if $0\leq t<K\{(1+\log2)A\}^{1/\alpha}$, then \eqref{eq:tail_calibrated} gives
\begin{equation*}
    \Pp\{U_i(j)>t\}
    \leq
    \Pp\{\abs{X_i(j)}>K\{(1+\log2)A\}^{1/\alpha}\}
    \leq e^{-A},
\end{equation*}
which equals the probability that the dominating variable is positive.  If $t=K\{(1+\log2)u\}^{1/\alpha}$ with $u\geq A$, then
\begin{equation*}
    \Pp\{U_i(j)>t\}\leq e^{-u}
    =
    \Pp\left\{K(1+\log2)^{1/\alpha}\eta_i(A+\xi_i)^{1/\alpha}>t\right\}.
\end{equation*}

Assume first that $\alpha\geq1$.  By \eqref{eq:subadd},
\begin{equation*}
    \sum_{i=1}^n\eta_i(A+\xi_i)^{1/\alpha}
    \leq
    2A^{1/\alpha}\sum_{i=1}^n\eta_i
    +
    \sum_{i=1}^n\eta_i\xi_i .
\end{equation*}
Since $ne^{-A}\leq r$, Lemma~\ref{lem:bernoulli_moment} gives
\begin{equation*}
    \left\|\sum_{i=1}^n\eta_i\right\|_{L_r}\leq(e+1)r.
\end{equation*}
Conditioning on $\eta_1,\ldots,\eta_n$ and using Lemma~\ref{lem:gamma_moment} gives
\begin{equation*}
    \left\|\sum_{i=1}^n\eta_i\xi_i\right\|_{L_r}
    \leq
    \left\|\sum_{i=1}^n\eta_i+r\right\|_{L_r}
    \leq(e+2)r.
\end{equation*}
Therefore
\begin{equation*}
    \left\|\sum_{i=1}^n\eta_i(A+\xi_i)^{1/\alpha}\right\|_{L_r}
    \leq
    (3e+4)rA^{1/\alpha}.
\end{equation*}
Using $p^{1/r}\leq e$,
\begin{equation*}
    \Eb\max_{1\leq j\leq p}\sum_{i=1}^nU_i(j)
    \leq
    e(3e+4)(1+\log2)^{1/\alpha}KrA^{1/\alpha}.
\end{equation*}
Moreover,
\begin{align*}
    \Eb\max_{1\leq j\leq p}
    \abs{\sum_{i=1}^n\{U_i(j)-\Eb U_i(j)\}}
    &\leq
    \Eb\max_{1\leq j\leq p}\sum_{i=1}^nU_i(j)
    +
    \max_{1\leq j\leq p}\sum_{i=1}^n\Eb U_i(j)\\
    &\leq
    2\Eb\max_{1\leq j\leq p}\sum_{i=1}^nU_i(j).
\end{align*}
Combining this bound with \eqref{eq:bounded_part} proves \eqref{eq:k1_alpha_lt_1}.

It remains to consider $0<\alpha<1$.  By \eqref{eq:convex}, Lemmas~\ref{lem:bernoulli_moment} and \ref{lem:exp_power_sum}, and the same conditioning argument,
\begin{align*}
    &\left\|\sum_{i=1}^n\eta_i(A+\xi_i)^{1/\alpha}\right\|_{L_r}
    \\
    &\leq
    2^{1/\alpha-1}(e+1)
    \left[1+2^{2/\alpha-2}\{1+\Gamma(1+1/\alpha)\}\right]
    rA^{1/\alpha}
    \\
    &\quad+
    2^{2/\alpha-2}
    \left(1+\frac{1}{\alpha}\right)^{1/\alpha}
    \frac{1}{1-\alpha}r^{1/\alpha}.
\end{align*}
Multiplying by $p^{1/r}\leq e$, by $K(1+\log2)^{1/\alpha}$, and by the centering factor two gives the last two terms in \eqref{eq:k1_alpha_lt_1}.  Combining these with \eqref{eq:bounded_part} proves \eqref{eq:k1_alpha_lt_1}.
\end{proof}

\begin{proof}[Proof of Corollary~\ref{cor:mar_weibull_upper}]
Let $P^n\in\Pc_{n,p}^{\psi_\alpha,{\rm m}}(\sigma,K)$ and put $n^{-1}\sum_iX_i=S_n$.  Apply Lemma~\ref{lem:order} to $S_n$ and condition on the random subsets $J_{m_s}$, where $m_s=2^s\wedge k$ and $0\le s\le\ceil{\log_2 k}$.  For every realization of $J_{m_s}$, the selected subvector satisfies the same coordinate-wise variance and marginal $\psi_\alpha$ bounds in dimension $\ceil{p/m_s}$.  Theorem~\ref{thm:k1} therefore gives a bound for the conditional supremum norm.

The comparisons
\[
    \log(2\ceil{p/m_s})\le 2\log(2p/m_s),
    \qquad
    1\vee\log\ceil{p/m_s}\le C\log(2p/m_s),
\]
and
\[
    \log\left(e+\frac{n}{1\vee\log\ceil{p/m_s}}\right)
    \le
    C\log\left(e+\frac{n}{1\vee\log(2p/k)}\right)
\]
for a universal constant $C$ reduce the resulting sums to those in Lemma~\ref{lem:sum}, together with \eqref{eq:sum_log_power} for the last term when $0<\alpha<1$.  This proves \eqref{eq:cor:mar_weibull_upper} after increasing $C_\alpha$.
\end{proof}

\begin{proof}[proof of Theorem~\ref{thm:mar_weibull_lower_maximal}]

For the first bound, take
\begin{equation*}
    X_i(j)=(\sigma\wedge K)(\log2)^{1/\alpha}\varepsilon_{ij},
    \qquad 1\leq j\leq p,
\end{equation*}
where the $\varepsilon_{ij}$'s are independent Rademacher variables.  Then $\Eb X_i(j)=0$, $\Eb X_i(j)^2\leq\sigma^2$, and $\norm{X_i(j)}_{\psi_\alpha}\leq K$.  Combining Theorem 2.1 of \cite{chang2026notesconstantsmaximarademacher} with \eqref{eq:block_embedding_lower} proves \eqref{eq:lower_rademacher_sparse}.

For the second bound, set
\begin{equation*}
    \rho=\frac{1}{8np},
    \qquad
    A=K\{\log(1+\rho^{-1})\}^{1/\alpha}
    =K\{\log(1+8np)\}^{1/\alpha}.
\end{equation*}
Let $\eta_{ij}$ be independent Bernoulli$(\rho)$ variables, let $\xi_{ij}$ be independent Rademacher variables independent of the $\eta_{ij}$'s, and define $Z_i(j)=A\xi_{ij}\eta_{ij}.$ Then $\Eb Z_i(j)=0$ and
\begin{equation*}
    \Eb\exp\left[\left(\frac{|Z_i(j)|}{K}\right)^\alpha\right]
    =1-\rho+\rho\exp\{(A/K)^\alpha\}=2.
\end{equation*}
Also, \eqref{eq:bernoulli_rademacher_feasible} gives $\Eb Z_i(j)^2=A^2\rho\leq\sigma^2$.  The event that exactly one of the $n\ceil{p/k}$ Bernoulli variables $\eta_{ij}$ equals one has probability
\begin{equation*}
    n\ceil{p/k}\rho(1-\rho)^{n\ceil{p/k}-1}
    =
    \frac18\left(1-\frac{1}{8n\ceil{p/k}}\right)^{n\ceil{p/k}-1}
    \geq \frac{e^{-1/8}}{8},
\end{equation*}
where $\log(1-x)\geq -x/(1-x)$ is used with $x=1/(8n\ceil{p/k})$.  On this event,
\begin{equation*}
    \max_{1\leq j\leq \ceil{p/k}}
    \left|\frac{1}{n}\sum_{i=1}^nZ_i(j)\right|
    =\frac{A}{n}.
\end{equation*}
Therefore the expectation in dimension $\ceil{p/k}$ is at least $e^{-1/8}A/(8n)$, and \eqref{eq:bernoulli_rademacher_lower_sparse} follows from \eqref{eq:block_embedding_lower}.

It remains to prove the third bound.  First note that, for every $0<\rho\leq1/8$, if $Y$ is Bernoulli$(\rho)$ and
\begin{equation*}
    A=a_\alpha K\{\log(1/\rho)\}^{1/\alpha},
\end{equation*}
then $\norm{A(Y-\rho)}_{\psi_\alpha}\leq K$.  Indeed,
\begin{align*}
    \Eb\exp\left[\left(\frac{A|Y-\rho|}{K}\right)^\alpha\right]
    &=(1-\rho)\exp\{a_\alpha^\alpha\rho^\alpha\log(1/\rho)\}
      +\rho\exp\{a_\alpha^\alpha(1-\rho)^\alpha\log(1/\rho)\} \\
    &\leq
      \exp\left\{\frac{a_\alpha^\alpha}{e\alpha}\right\}
      +\rho^{1-a_\alpha^\alpha}
    \leq
      2.
\end{align*}
Here we used $\sup_{0<x<1}x^\alpha\log(1/x)=1/(e\alpha)$, $a_\alpha^\alpha\leq1/2$, and $\rho\leq1/8$.

Assume \eqref{eq:centered_bernoulli_feasible}.  Work in dimension $\ceil{p/k}$ and set
\begin{equation*}
    \rho=\frac{\log(2\ceil{p/k})}{8n},
    \qquad
    A=a_\alpha K\left\{\log\left(\frac{8n}{\log(2\ceil{p/k})}\right)\right\}^{1/\alpha}.
\end{equation*}
Let $Y_{ij}$ be independent Bernoulli$(\rho)$ variables and put $Z_i(j)=A(Y_{ij}-\rho)$.  Since $\log(2\ceil{p/k})\leq n$, $\rho\leq1/8$, and the preceding paragraph gives $\norm{Z_i(j)}_{\psi_\alpha}\leq K$.  The variance feasibility in \eqref{eq:centered_bernoulli_feasible} gives $\Eb Z_i(j)^2\leq\sigma^2$.

If $\log(2\ceil{p/k})\leq4$, then, for each coordinate,
\begin{align*}
    \Pb\left(\sum_{i=1}^nY_{ij}\geq1\right)
    &\geq n\rho(1-\rho)^{n-1}
     \geq {\frac{\log(2\ceil{p/k})}{8}}\exp\left\{-{\frac{\log(2\ceil{p/k})}{7}}\right\}.
\end{align*}
On this event,
\begin{equation*}
    \left|\frac1n\sum_{i=1}^nZ_i(j)\right|
    \geq {\frac{A}{n}}\left(1-{\frac{\log(2\ceil{p/k})}{8}}\right)
    \geq {\frac{A}{2n}}.
\end{equation*}
Since $\log(2\ceil{p/k})\geq\log2$, the expectation in dimension $\ceil{p/k}$ is at least
\begin{equation*}
    {\frac{e^{-4/7}\log2}{64}}\,
    A\frac{\log(2\ceil{p/k})}{n}.
\end{equation*}
If $4<\log(2\ceil{p/k})\leq8$, then, for each coordinate,
\begin{align*}
    \Pb\left(\sum_{i=1}^nY_{ij}\geq2\right)
    &\geq {\frac{n(n-1)}{2}}\rho^2(1-\rho)^{n-2}
      \geq {\frac{e^{-8/7}}{16}}.
\end{align*}
Here we used $n\geq\log(2\ceil{p/k})>4$, $\rho\leq1/8$, and $\log(1-x)\geq-x/(1-x)$.  On this event,
\begin{equation*}
    \left|\frac1n\sum_{i=1}^nZ_i(j)\right|
    \geq {\frac{A}{n}}\left(2-{\frac{\log(2\ceil{p/k})}{8}}\right)
    \geq {\frac{A}{n}}.
\end{equation*}
Since $\log(2\ceil{p/k})\leq8$, the expectation in dimension $\ceil{p/k}$ is at least
\begin{equation*}
    {\frac{e^{-8/7}}{128}}\,
    A\frac{\log(2\ceil{p/k})}{n}.
\end{equation*}
It remains to consider $\log(2\ceil{p/k})>8$.  For each coordinate,
\begin{align*}
    &\Pb\left(\sum_{i=1}^nY_{ij}=\floor{\log(2\ceil{p/k})/2}\right) \\
    &\geq
    \left(\frac{n}{\floor{\log(2\ceil{p/k})/2}}\right)^{\floor{\log(2\ceil{p/k})/2}}
    \left(\frac{\log(2\ceil{p/k})}{8n}\right)^{\floor{\log(2\ceil{p/k})/2}}
    (1-\rho)^n \\
    &\geq
    \exp\left[-\left\{\frac{\log4}{2}+\frac17\right\}\log(2\ceil{p/k})\right].
\end{align*}
Here we used $\binom{n}{m}\geq(n/m)^m$,
\[
    \floor{\log(2\ceil{p/k})/2}\leq \log(2\ceil{p/k})/2,
    \qquad
    (1-\rho)^n\geq\exp\{-\log(2\ceil{p/k})/7\}.
\]
Since
\[
    \ceil{p/k}=\exp\{\log(2\ceil{p/k})\}/2
    \quad\mbox{and}\quad
    1-(\log4)/2-1/7>0,
\]
we get
\begin{equation*}
    \Pb\left(
    \max_{1\leq j\leq \ceil{p/k}}\sum_{i=1}^nY_{ij}
    \geq\floor{\log(2\ceil{p/k})/2}
    \right)
    \geq
    \frac12.
\end{equation*}
On this event,
\begin{equation*}
    \max_{1\leq j\leq \ceil{p/k}}
    \left|\frac{1}{n}\sum_{i=1}^nZ_i(j)\right|
    \geq
    \frac{A}{n}
    \left(\floor{\frac{\log(2\ceil{p/k})}{2}}-\frac{\log(2\ceil{p/k})}{8}\right)
    \geq
    A\frac{\log(2\ceil{p/k})}{4n}.
\end{equation*}
Thus, in all three cases, the expectation in dimension $\ceil{p/k}$ is at least
\begin{equation*}
    {\frac{e^{-8/7}}{128}}\,
    A\frac{\log(2\ceil{p/k})}{n},
\end{equation*}
and \eqref{eq:centered_bernoulli_lower_sparse} follows from \eqref{eq:block_embedding_lower}.
    
\end{proof}

\begin{proof}[proof of Corollary~\ref{cor:mar_weibull_lower}]
The proof first treats $\ceil{p/k}$ coordinates and then embeds the construction in $\Real^p$.  Let $Z_i\in\Real^{\ceil{p/k}}$ be an iid construction satisfying the marginal variance and marginal $\psi_\alpha$ constraints with parameters $(\sigma,K)$.  Repeat each coordinate of $Z_i$ exactly $\floor{p/\ceil{p/k}}$ times and fill the remaining coordinates, if any, by zero.  Since
$\floor{p/\ceil{p/k}}\geq \frac{k}{2},$ we have
\begin{equation}\label{eq:block_embedding_lower}
    \left\|
    \frac{1}{n}\sum_{i=1}^nX_i
    \right\|_{(k),2}
    \geq
    \sqrt{\frac{k}{2}}
    \max_{1\leq j\leq \ceil{p/k}}
    \left|\frac{1}{n}\sum_{i=1}^nZ_i(j)\right|.
\end{equation} Now, Theorem~\ref{thm:mar_weibull_lower_maximal} completes the proof.
\end{proof}

\begin{proof}[proof of Corollary~\ref{cor:mar_weibull_characterization}]
The upper bound follows from Corollary~\ref{cor:mar_weibull_upper}. Since the
iid class is a subclass of the product class, the same upper bound also holds
for $\Ec_{n,p,k,\psi_\alpha}^{{\rm m},*}(\sigma,K)$.

It remains to prove the matching lower bound for
$\Ec_{n,p,k,\psi_\alpha}^{{\rm m},*}(\sigma,K)$. The first condition in
\eqref{eq:centered_bernoulli_feasible} implies
\[
    \min\left\{1,\sqrt{\frac{\log(2\ceil{p/k})}{n}}\right\}
    =
    \sqrt{\frac{\log(2\ceil{p/k})}{n}}.
\]
Therefore \eqref{eq:lower_rademacher_sparse}, together with
$\log(2p/k)\leq \log(2\ceil{p/k})$ and
$c\sigma\wedge C_\alpha K\lesssim_\alpha\sigma\wedge K$, gives the lower bound
matching the first summand in the right-hand side of
\eqref{eq:cor:mar_weibull_upper}.

Next, \eqref{eq:centered_bernoulli_feasible} and
\eqref{eq:centered_bernoulli_lower_sparse} give the lower bound matching the
second summand. Indeed, since $p/k\geq1$,
\[
    1\vee\log(p/k)
    \leq
    \frac{\log(2\ceil{p/k})}{\log2},
\]
and
\[
    \log(2\ceil{p/k})
    \leq
    3\{1\vee\log(p/k)\}.
\]
The second inequality follows by considering separately the cases
$1\leq p/k<e$ and $p/k\geq e$. Since
$\log(2\ceil{p/k})\leq n$, the preceding display implies
\[
    \log\left(e+\frac{n}{1\vee\log(p/k)}\right)
    \leq
    \log\left(\frac{8n}{\log(2\ceil{p/k})}\right).
\]
Consequently,
\[
\begin{aligned}
&\frac{1\vee\log(p/k)}{n}
    \left[
        \log\left(e+\frac{n}{1\vee\log(p/k)}\right)
    \right]^{1/\alpha}  \\
&\qquad\leq
    \frac{1}{\log2}\,
    \frac{\log(2\ceil{p/k})}{n}
    \left\{
        \log\left(\frac{8n}{\log(2\ceil{p/k})}\right)
    \right\}^{1/\alpha}.
\end{aligned}
\]
Thus \eqref{eq:centered_bernoulli_lower_sparse} controls the second summand in
the right-hand side of \eqref{eq:cor:mar_weibull_upper}, up to a constant
depending only on $\alpha$.

When $\alpha\in(0,1)$, \eqref{eq:bernoulli_rademacher_feasible} and
\eqref{eq:bernoulli_rademacher_lower_sparse} give the lower bound matching the
third summand. Indeed,
\[
    1\vee\log(p/k)
    \leq
    \log(1+8n\ceil{p/k}),
\]
and hence
\[
    \frac{\{1\vee\log(p/k)\}^{1/\alpha}}{n}
    \leq
    \frac{\{\log(1+8n\ceil{p/k})\}^{1/\alpha}}{n}.
\]
For $\alpha\geq1$, the third summand in
\eqref{eq:cor:mar_weibull_upper} is absent.

Combining the preceding lower bounds, we obtain that
$\Ec_{n,p,k,\psi_\alpha}^{{\rm m},*}(\sigma,K)$ is bounded below by a constant
depending only on $\alpha$ times each nonnegative summand in the right-hand
side of \eqref{eq:cor:mar_weibull_upper}. Since the maximum of finitely many
nonnegative numbers is at least their average, this gives the lower bound
matching the full right-hand side of \eqref{eq:cor:mar_weibull_upper}. The
corresponding lower bound for
$\Ec_{n,p,k,\psi_\alpha}^{{\rm m}}(\sigma,K)$ follows from
\[
    \Ec_{n,p,k,\psi_\alpha}^{{\rm m},*}(\sigma,K)
    \leq
    \Ec_{n,p,k,\psi_\alpha}^{{\rm m}}(\sigma,K).
\]
The proof is complete.
\end{proof}

\section{Useful Lemmas and Propositions}\label{app:aux-estimates}

For nonnegative random variables $U$ and $V$, write $U\preceq V$ if
\begin{equation*}
    \Pp\{U>t\}\leq \Pp\{V>t\},
    \qquad t\geq0.
\end{equation*}
Then $\Eb\phi(U)\leq \Eb\phi(V)$ for every nondecreasing measurable $\phi$ for which the expectations exist.  If $U_1,\ldots,U_n$ are independent, $V_1,\ldots,V_n$ are independent, and $U_i\preceq V_i$ for each $i$, then $\sum_i U_i\preceq \sum_iV_i$, by quantile coupling (see, for instance, Theorem 1.A.3 of \cite{Shaked2007}).

For $0<q\leq1$, $A\geq1$, and $x\geq0$,
\begin{equation}
    (A+x)^q\leq A^q+x^q\leq 2A^q+x .
    \label{eq:subadd}
\end{equation}
The first inequality is subadditivity of $x\mapsto x^q$.  For the second, if $x\leq A$, then $x^q\leq A^q$, while if $x>A$, then $x\geq1$ and $x^q\leq x$.  If $q>1$, then
\begin{equation}
    (A+x)^q\leq 2^{q-1}(A^q+x^q),
    \qquad A,x\geq0 .
    \label{eq:convex}
\end{equation}

\begin{lemma}[Bernstein integral]\label{lem:bernstein_integral}
Let $Y_i(j)$, $1\leq i\leq n$, $1\leq j\leq p$, be independent over $i$, centered, and satisfy
\begin{equation*}
    \abs{Y_i(j)}\leq b,
    \qquad
    \max_{1\leq j\leq p}\sum_{i=1}^n\Eb Y_i(j)^2\leq V .
\end{equation*}
Then
\begin{equation*}
    \Eb\max_{1\leq j\leq p}
    \abs{\sum_{i=1}^nY_i(j)}
    \leq
    \sqrt{2V}
    \left\{
    \sqrt{\log(2p)}+\frac{1}{2\sqrt{\log(2p)}}
    \right\}
    +\frac{2b}{3}\{\log(2p)+1\}.
\end{equation*}
\end{lemma}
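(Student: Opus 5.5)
The plan is the standard route: a sub-Gaussian/Bernstein tail bound for each signed coordinate, a union bound over the $2p$ signed coordinates, and integration of the tail. For each fixed $j$ and sign $s\in\{\pm1\}$, the variables $sY_i(j)$ are independent, centered, bounded by $b$, and have variance sum at most $V$. Bernstein's inequality (or Lemma~\ref{lem:bennett} together with $h(x)\ge x^2/(2(1+x/3))$) gives, for $u\ge0$,
\begin{equation*}
    \Pb\left(s\sum_{i=1}^nY_i(j)>\sqrt{2Vu}+\frac{bu}{3}\right)\le e^{-u}.
\end{equation*}
Replacing $u$ by $\log(2p)+u$ and taking a union bound over the $2p$ pairs $(j,s)$ yields
\begin{equation*}
    \Pb\left(\max_j\Abs{\sum_iY_i(j)}>\sqrt{2V\{\log(2p)+u\}}+\frac{b\{\log(2p)+u\}}{3}\right)\le e^{-u}.
\end{equation*}

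Next write $T=\max_j\abs{\sum_iY_i(j)}$ and $g(u)=\sqrt{2V(L+u)}+b(L+u)/3$ with $L=\log(2p)$. Since $g$ is increasing, the change of variables $t=g(u)$ gives
\begin{equation*}
    \Eb T\le g(0)+\int_0^\infty \Pb(T>g(u))g'(u)\,du\le g(0)+\int_0^\infty e^{-u}g'(u)\,du.
\end{equation*}
Here $g'(u)=\sqrt{2V}/(2\sqrt{L+u})+b/3\le \sqrt{2V}/(2\sqrt L)+b/3$. Integrating against $e^{-u}$ therefore gives
\begin{equation*}
    \Eb T\le\sqrt{2V}\left\{\sqrt L+\frac{1}{2\sqrt L}\right\}+\frac b3(L+1).
\end{equation*}

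This already has the claimed form, with constant $b/3$ in place of $2b/3$. If the Bennett form actually used has the weaker linear term $2bu/3$ (for instance if the lemma is applied with range $2b$), the same computation gives exactly the stated $\tfrac{2b}{3}\{\log(2p)+1\}$. Either way the stated bound follows.

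The only delicate point is checking that the version of the Bernstein tail being invoked has the $\sqrt{2Vu}+cbu$ form with $c\le2/3$. This follows from the elementary inequality $h(x)\ge \frac{x^2}{2(1+x/3)}$ by solving the quadratic in $t$. Everything else is routine.
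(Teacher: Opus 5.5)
Your proposal is correct and follows essentially the same route as the paper: a Bennett tail bound for each of the $2p$ signed coordinates, a union bound, and integration of the tail using $\int_0^\infty e^{-u}(\log(2p)+u)^{-1/2}\,du\le(\log(2p))^{-1/2}$. The paper states the tail with $2b/3$, whereas Lemma~\ref{lem:bennett} directly gives your sharper $b/3$.

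One correction to your last paragraph. Solving the quadratic from $h(x)\ge x^2/\{2(1+x/3)\}$ gives the threshold $bu/3+\sqrt{(bu/3)^2+2Vu}\le\sqrt{2Vu}+2bu/3$, i.e.\ $c=2/3$, not $c=1/3$. The $c=1/3$ form needs the sharper check $h(\sqrt{2s}+s/3)\ge s$ that is carried out in Lemma~\ref{lem:bennett}. Since the statement only requires $c\le 2/3$, this does not affect your conclusion.
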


\begin{proof}
Applying Bennett's inequality \citep{Bennett1962} (see Lemma~\ref{lem:bennett}) to both signs and taking a union bound gives, for all $x\geq0$,
\begin{equation*}
    \Pp\left\{
    \max_{1\leq j\leq p}\abs{\sum_{i=1}^nY_i(j)}
    >
    \sqrt{2V\{x+\log(2p)\}}+\frac{2b}{3}\{x+\log(2p)\}
    \right\}\leq e^{-x}.
\end{equation*}
Integrating this tail bound and using
\begin{equation*}
    \int_0^\infty \frac{e^{-x}}{\sqrt{x+\log(2p)}}\,dx
    \leq \frac{1}{\sqrt{\log(2p)}}
\end{equation*}
proves the result.
\end{proof}

\begin{lemma}\label{lem:bernoulli_moment}
Let $N$ be a sum of independent Bernoulli variables such that $\Eb N\leq r$ for some $r\geq1$. Then $\norm{N}_{L_r}\leq (e+1)r.$
\end{lemma}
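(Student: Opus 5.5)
We need to prove: if $N=\sum_i \eta_i$ with independent Bernoulli $\eta_i$ and $\Eb N\le r$, $r\ge1$, then $\norm{N}_{L_r}\le (e+1)r$. Note $r$ need not be an integer.

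The plan is to bound the moment generating function of $N$ and convert it into a moment bound through the elementary inequality $x^r\le (r/\lambda)^r e^{-r}e^{\lambda x}$ for $x\ge0$ and $\lambda>0$. This holds because $x\mapsto x^r e^{-\lambda x}$ is maximized at $x=r/\lambda$.

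First, independence and $1+y\le e^y$ give, for $\lambda>0$ and $\mu=\Eb N\le r$,
\begin{equation*}
    \Eb e^{\lambda N}=\prod_i\{1+\Eb\eta_i(e^\lambda-1)\}\le \exp\{\mu(e^\lambda-1)\}\le\exp\{r(e^\lambda-1)\}.
\end{equation*}
Taking $\lambda=1$ then yields
\begin{equation*}
    \Eb N^r\le r^re^{-r}\,\Eb e^{N}\le r^re^{-r}e^{r(e-1)}=r^re^{r(e-2)},
\end{equation*}
so $\norm{N}_{L_r}\le e^{e-2}r\approx 2.05\,r\le (e+1)r$. This already proves the lemma.

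An alternative route is via $\norm{N}_{L_r}\le \mu+\norm{N-\mu}_{L_r}$ together with a Rosenthal or Latała bound, but the MGF argument above is shorter and fully explicit.

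The only point needing care is that $r$ is real rather than an integer. The inequality $x^r\le (r/\lambda)^re^{-r}e^{\lambda x}$ holds for every real $r>0$, so no integrality is needed. The choice $\lambda=1$ keeps the constant within $e+1$, so I do not expect a real obstacle. The slack between $e^{e-2}$ and $e+1$ suggests the stated constant was derived by a cruder route, possibly the centering approach mentioned above.
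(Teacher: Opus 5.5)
Your proof is correct, but it takes a different route from the paper's.

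The paper applies Chernoff's inequality to get $\Pb\{N\ge er+x\}\le e^{-x}$ for $x\ge0$. From this it concludes the stochastic domination $N\preceq er+\xi$, with $\xi$ standard exponential. It then uses Minkowski's inequality together with $\norm{\xi}_{L_r}=\Gamma(r+1)^{1/r}\le r$. So the constant $e+1$ is simply the shift $er$ plus this exponential moment, not a centering or Rosenthal argument as you guessed.

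You instead bound the moment generating function directly and convert it to an $L_r$ bound through the pointwise inequality $x^r\le (r/\lambda)^re^{-r}e^{\lambda x}$. This is the same device the paper uses in Lemma~\ref{lem:gamma_moment}. What your route buys:
\begin{itemize}
\item It avoids the stochastic-ordering and Minkowski steps.
\item It gives the sharper constant $e^{e-2}\approx 2.05$, which optimizing over $\lambda$ would improve further.
\item It does not need $r\ge1$ for the moment inequality itself. The paper's step $\Gamma(r+1)^{1/r}\le r$, and Minkowski, do use $r\ge1$.
\end{itemize}

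What the paper's route buys is a distributional statement. The domination $N\preceq er+\xi$ controls all moments and tails of $N$ at once, and it matches the $\preceq$ framework used elsewhere in Appendix~\ref{app:aux-estimates}, for instance in Lemma~\ref{lem:exp_power_sum} and the tail-domination step of Theorem~\ref{thm:k1}. Since only the $L_r$ bound of this lemma is invoked later, in the proof of Theorem~\ref{thm:k1}, your version would work as a drop-in replacement.
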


\begin{proof}
Chernoff's inequality \citep{Chernoff1952} gives, for every $x\geq0$,
\begin{equation*}
    \Pp\{N\geq er+x\}
    \leq
    \left(\frac{er}{er+x}\right)^{er+x}
    \leq e^{-x},
\end{equation*}
where the last step follows from $(1+t)\log(1+t)\geq t$, $t\geq0$.  Hence $N\preceq er+\xi$, where $\xi$ is standard exponential.  Since $\norm{\xi}_{L_r}=\Gamma(r+1)^{1/r}\leq r$, the claim follows.
\end{proof}

\begin{lemma}\label{lem:gamma_moment}
If $\xi_1,\ldots,\xi_m$ are independent standard exponential variables, then, for all integers $m\geq0$ and all $r\geq1$,
\begin{equation*}
    \left\|\sum_{i=1}^m\xi_i\right\|_{L_r}\leq m+r .
\end{equation*}
\end{lemma}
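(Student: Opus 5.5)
The statement to prove is Lemma~\ref{lem:gamma_moment}: for independent standard exponentials $\xi_1,\ldots,\xi_m$ and $r\ge1$, one has $\|\sum_{i=1}^m\xi_i\|_{L_r}\le m+r$. The plan is to identify $G_m=\sum_{i=1}^m\xi_i$ as a Gamma$(m,1)$ variable and compute its $r$-th moment exactly. The case $m=0$ is trivial, since then the sum is zero. For $m\ge1$,
\begin{equation*}
    \Eb G_m^r=\frac{\Gamma(m+r)}{\Gamma(m)},
\end{equation*}
so it suffices to show $\Gamma(m+r)/\Gamma(m)\le (m+r)^r$ for all integers $m\ge1$ and real $r\ge1$.

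The key step is a bound on the Gamma ratio. One route uses log-convexity of $\Gamma$. Since $\log\Gamma$ is convex, its increments satisfy
\begin{equation*}
    \log\Gamma(m+r)-\log\Gamma(m)\le r\,\psi(m+r),
\end{equation*}
where $\psi$ is the digamma function. Combined with $\psi(x)\le\log x$ for $x>0$, this gives $\Gamma(m+r)/\Gamma(m)\le (m+r)^r$, which is exactly what is needed.

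If one prefers to avoid the digamma function, an elementary version works as well. Write $r=\ell+\theta$ with $\ell=\lfloor r\rfloor\ge1$ and $\theta\in[0,1)$. Then
\begin{equation*}
    \Gamma(m+r)/\Gamma(m)=\Big[\prod_{t=0}^{\ell-1}(m+\theta+t)\Big]\,\frac{\Gamma(m+\theta)}{\Gamma(m)}.
\end{equation*}
Each factor in the product is at most $m+r$. By log-convexity of $\Gamma$ (Gautschi-type bound), $\Gamma(m+\theta)/\Gamma(m)\le m^\theta\le (m+r)^\theta$. Multiplying these bounds gives $(m+r)^{\ell+\theta}=(m+r)^r$. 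Taking $r$-th roots then completes the proof.

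The main obstacle is the non-integer $r$ case. There the inequality $\Gamma(m+\theta)\le m^\theta\Gamma(m)$ for $\theta\in[0,1)$ has to be justified. I would obtain it from Hölder's inequality applied to $\Gamma(m+\theta)=\int x^{m-1+\theta}e^{-x}\,dx$, written as an interpolation between $\Gamma(m)$ and $\Gamma(m+1)=m\Gamma(m)$ with weights $1-\theta$ and $\theta$. This gives $\Gamma(m+\theta)\le\Gamma(m)^{1-\theta}\Gamma(m+1)^{\theta}=m^\theta\Gamma(m)$. The remaining steps are routine.
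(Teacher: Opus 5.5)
Your proof is correct, but it takes a different route from the paper. You compute the moment exactly, $\Eb G_m^r=\Gamma(m+r)/\Gamma(m)$, which reduces the claim to the Gamma-ratio inequality $\Gamma(m+r)\le (m+r)^r\Gamma(m)$. You then get this from log-convexity of $\Gamma$, in one of two ways:
\begin{itemize}
\item through $\psi(x)\le\log x$, which itself follows from the same convexity, since $\psi(x)\le\log\Gamma(x+1)-\log\Gamma(x)=\log x$;
\item through your H\"older interpolation $\Gamma(m+\theta)\le\Gamma(m)^{1-\theta}\Gamma(m+1)^{\theta}=m^\theta\Gamma(m)$ combined with the recursion.
\end{itemize}

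The paper never uses the Gamma law of $G_m$. It uses the pointwise bound $y^r\le (r/(e\lambda))^r e^{\lambda y}$ for $0<\lambda<1$ and the moment generating function $\Eb e^{\lambda G_m}=(1-\lambda)^{-m}$. Choosing $\lambda=r/(m+r)$ gives $\|G_m\|_{L_r}\le \frac{m+r}{e}(1+r/m)^{m/r}\le m+r$.

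The two routes buy different things. The paper's Chernoff-type argument needs only an upper bound on the moment generating function. It therefore carries over unchanged to any nonnegative variable whose MGF is dominated by $(1-\lambda)^{-m}$, such as sums of independent variables stochastically dominated by standard exponentials, which fits the domination arguments used elsewhere in the paper. It also avoids any special-function facts. Your argument exploits the exact distribution, so it is tied to the Gamma law. In exchange, it gives the moment exactly and isolates the single inequality used, a Gamma-ratio bound that is fully elementary in your H\"older version.
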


\begin{proof}
The case $m=0$ is trivial.  For $m\geq1$, write $G=\sum_{i=1}^m\xi_i$.  For $0<\lambda<1$ and $y\geq0$,
\begin{equation*}
    y^r\leq \left(\frac{r}{e\lambda}\right)^r e^{\lambda y}.
\end{equation*}
Thus $\norm{G}_{L_r}\leq r(e\lambda)^{-1}(1-\lambda)^{-m/r}$.  Taking $\lambda=r/(m+r)$ gives
\begin{equation*}
    \norm{G}_{L_r}
    \leq \frac{m+r}{e}\left(1+\frac{r}{m}\right)^{m/r}
    \leq m+r .
\end{equation*}
\end{proof}

\begin{lemma}\label{lem:exp_power_sum}
Let $q>1$, and let $\xi_1,\ldots,\xi_m$ be independent standard exponential variables.  Then, for all integers $m\geq0$ and all $r\geq1$,
\begin{equation*}
    \left\|\sum_{i=1}^m\xi_i^q\right\|_{L_r}
    \leq
    2^{2q-2}\{1+\Gamma(q+1)\}m
    +2^{q-1}(q+1)^q\frac{q}{q-1}r^q .
\end{equation*}
\end{lemma}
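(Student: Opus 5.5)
The plan is to reduce to the two scalar moment lemmas already proved, Lemma~\ref{lem:bernoulli_moment} and Lemma~\ref{lem:gamma_moment}, by a truncation at a fixed level combined with the memoryless property of the exponential law. Fix a level $a\ge0$, to be chosen as a constant multiple of $\log(1+m/r)$ or simply $a=0$ or $a=1$ depending on which version closes the constants. Write $\xi_i=\xi_i\wedge a+(\xi_i-a)_+$ and use \eqref{eq:convex} to get
\begin{equation*}
    \sum_{i=1}^m\xi_i^q\le 2^{q-1}\Bigl(\sum_{i=1}^m(\xi_i\wedge a)^q+\sum_{i=1}^m(\xi_i-a)_+^q\Bigr).
\end{equation*}
By memorylessness, $(\xi_i-a)_+$ has the law of $\eta_i\xi_i'$, where $\eta_i\sim$ Bernoulli$(e^{-a})$ and $\xi_i'$ is standard exponential, all independent. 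Hence the second sum is equal in law to $\sum_{i\le N}\xi_i'^q$ with $N=\sum_i\eta_i$ binomial.

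Next I would separate a ``mean'' contribution from a ``large deviation'' contribution in the exponential powers themselves. Write $\xi'^q=\Eb\xi'^q+(\xi'^q-\Gamma(q+1))$. Alternatively, and I think more robustly, apply \eqref{eq:convex} a second time after splitting $\xi'$ at level $r$-ish. The part of size at most a constant gives a term proportional to the count $N$, or to $m$, times $2^{q-1}\{1+\Gamma(q+1)\}$. This is where the coefficient $2^{2q-2}\{1+\Gamma(q+1)\}m$ should come from: two applications of \eqref{eq:convex}, and $\Eb\xi^q=\Gamma(q+1)$ plus the trivial bound $1$ from the truncated piece.

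The tail contribution is where $r^q$ must appear. Here I would bound the $L_r$ norm of a sum of nonnegative independent heavy-ish variables $W_i=\xi_i'^q\mathbf 1\{\xi_i'>t\}$ by a layer-cake decomposition over dyadic or integer levels: $\xi'\in(\ell,\ell+1]$, $\ell\ge t$. The number of indices landing in level $\ell$ is binomial with mean at most $me^{-\ell}$. Lemma~\ref{lem:bernoulli_moment} bounds its $L_r$ norm by $(e+1)\max\{r,me^{-\ell}\}$, and each such index contributes at most $(\ell+1)^q$. The levels with $me^{-\ell}\ge r$ feed into the linear-in-$m$ term, because $\sum_\ell(\ell+1)^q e^{-\ell}\lesssim\Gamma(q+1)$. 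The levels with $me^{-\ell}<r$ contribute $\sum_\ell (\ell+1)^q\|N_\ell\|_{L_r}$, and this is the delicate piece. For it I would rather use a sharper Chernoff estimate, $\Pb(N_\ell\ge k)\le (me^{-\ell}e/k)^k$, to show that only levels $\ell\lesssim (q+1)r$ matter effectively. I would then sum a series of the form $r^q\sum_{j\ge1}j^{-q}\le r^q\,q/(q-1)$, which is how I expect the factor $(q+1)^q q/(q-1)$ to arise.

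The main obstacle is this last summation. One must avoid any $\log m$ factor in the $r^q$ term, so the decay in $\ell$ must be extracted from the Chernoff tail of $N_\ell$ rather than from the $L_r$ bound of Lemma~\ref{lem:bernoulli_moment} alone. If the layer-cake route becomes too lossy, my fallback is the Rényi representation $\xi_{(k)}\overset{d}{=}\sum_{l\ge k}E_l/l$ of the order statistics. Under it, $\sum_i\xi_i^q=\sum_k\xi_{(k)}^q$, and one can bound the $k$-th largest term in $L_r$ by comparison with a Gamma variable via Lemma~\ref{lem:gamma_moment}, which would give a $k^{-q}$-type decay for large $k$ and the $q/(q-1)$ factor directly.
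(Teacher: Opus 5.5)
\textbf{The primary route fails.} In the layer-cake step you bound the delicate levels by $\sum_\ell(\ell+1)^q\|N_\ell\|_{L_r}$, that is, you apply Minkowski across levels. This loses a factor of order $r$, and no tail bound on the individual $N_\ell$ can recover it.
\begin{itemize}
\item Since $N_\ell$ is integer valued, $\|N_\ell\|_{L_r}\ge \Pb(N_\ell\ge 1)^{1/r}$.
\item For every level with $me^{-\ell}\ge e^{-r}$, this is at least $\{(1-e^{-1})^2e^{-r}\}^{1/r}\ge 0.14$.
\item At least $\lfloor r\rfloor$ of the levels with $me^{-\ell}<r$ are of this kind.
\end{itemize}
So your sum is at least $0.14\sum_{j\le \lfloor r\rfloor}j^q\ge 0.14\lfloor r\rfloor^{q+1}/(q+1)$. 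For $m=1$, fixed $q$ and large $r$, this exceeds the right-hand side of the lemma, which is $O_q(r^q)$. The obstruction is the $1/r$-th power of small hitting probabilities summed over about $r$ levels, not large values of $N_\ell$, so sharper Chernoff tails do not help. The memoryless truncation does not help either: it reproduces the original problem with a binomial number of summands.

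\textbf{The fallback has the right skeleton but lacks the key estimate.} The skeleton is exactly the paper's: $\sum_i\xi_i^q=\sum_s\xi_{(s)}^q$, Minkowski in $L_r$, $\|\xi_{(s)}^q\|_{L_r}=\|\xi_{(s)}\|_{L_{qr}}^q$, then \eqref{eq:convex} and $\sum_s s^{-q}\le q/(q-1)$. Everything hinges on $\|\xi_{(s)}\|_{L_{qr}}\le \log(em/s)+(qr+1)/s$, which you do not establish.

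The comparison you indicate does not give it. The Rényi sum should read $\sum_{l=s}^m E_l/l$, since the infinite sum diverges. Bounding it by $s^{-1}\sum_{l=s}^m E_l$ and applying Lemma~\ref{lem:gamma_moment} yields $(m-s+1+qr)/s$. Already at $s=1$ this gives a term of order $m^q$, not $m$.

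The missing idea is a union bound over $s$-subsets. Using $\binom{m}{s}\le (em/s)^s$,
$\Pb\{\xi_{(s)}>\log(em/s)+u/s\}\le \binom{m}{s}e^{-s\log(em/s)-u}\le e^{-u}$.
Hence $\xi_{(s)}\preceq \log(em/s)+\xi/s$ for a single standard exponential $\xi$. Lemma~\ref{lem:gamma_moment} with one summand at moment order $qr$ then gives the needed bound.

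Here the deterministic part is logarithmic, so
$\sum_{s=1}^m\{\log(em/s)\}^q\le m\int_0^1\{\log(e/t)\}^q\,dt=m\int_0^\infty(1+u)^qe^{-u}\,du\le 2^{q-1}\{1+\Gamma(q+1)\}m$.
Combined with \eqref{eq:convex}, this produces exactly the stated constants.
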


\begin{proof}
The case $m=0$ is immediate.  Let $\xi_{(1)}\geq\cdots\geq\xi_{(m)}$ be the decreasing rearrangement.  For $1\leq s\leq m$ and $u\geq0$,
\begin{equation*}
    \Pp\left\{
    \xi_{(s)}>\log\left(\frac{em}{s}\right)+\frac{u}{s}
    \right\}
    \leq
    \binom{m}{s}\exp\left[-s\log\left(\frac{em}{s}\right)-u\right]
    \leq e^{-u}.
\end{equation*}
Hence $\xi_{(s)}\preceq \log(em/s)+\xi/s$, where $\xi$ is standard exponential.  By Lemma~\ref{lem:gamma_moment}, with moment order $qr$ and $m=1$,
\begin{equation*}
    \norm{\xi_{(s)}}_{L_{qr}}
    \leq
    \log\left(\frac{em}{s}\right)+\frac{qr+1}{s}
    \leq
    \log\left(\frac{em}{s}\right)+\frac{(q+1)r}{s}.
\end{equation*}
Minkowski's inequality in $L_r$ gives
\begin{align*}
    \left\|\sum_{i=1}^m\xi_i^q\right\|_{L_r}
    &\leq
    \sum_{s=1}^m\norm{\xi_{(s)}^q}_{L_r} \\
    &\leq
    2^{q-1}\sum_{s=1}^m\left\{\log\left(\frac{em}{s}\right)\right\}^q
    +2^{q-1}(q+1)^qr^q\sum_{s=1}^m s^{-q}.
\end{align*}
Furthermore,
\begin{equation*}
    \sum_{s=1}^m\left\{\log\left(\frac{em}{s}\right)\right\}^q
    \leq
    m\int_0^1\{\log(e/t)\}^q\,dt
    \leq
    2^{q-1}\{1+\Gamma(q+1)\}m,
\end{equation*}
and $\sum_{s=1}^m s^{-q}\leq q/(q-1)$.  Combining these estimates proves the lemma.
\end{proof}

\begin{lemma}[Bennett's inequality \citep{Bennett1962}]\label{lem:bennett}
Let $\xi_1,\ldots,\xi_n$ be independent mean-zero random variables satisfying $|\xi_i|\le K$ almost surely and $\sum_{i=1}^n\Eb\xi_i^2\le n\sigma^2$. Then, for every $x>0$,
\begin{equation}\label{eq:bennett_consequence}
    \Pb\left(\sum_{i=1}^n\xi_i
    \ge n\sigma\sqrt{2x/n}+Kx/3\right)
    \le e^{-x}.
\end{equation}
The same bound holds for the lower tail.
\end{lemma}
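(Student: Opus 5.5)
The plan is to derive \eqref{eq:bennett_consequence} from the classical Bennett moment generating function bound, passing through the sub-gamma (Bernstein) form and then inverting the tail. Write $S=\sum_{i=1}^n\xi_i$ and $v=n\sigma^2\ge\sum_i\Eb\xi_i^2$, and note that $n\sigma\sqrt{2x/n}=\sqrt{2vx}$. So the claim reads
\begin{equation*}
\Pb\bigl(S\ge\sqrt{2vx}+Kx/3\bigr)\le e^{-x}.
\end{equation*}

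\textbf{Step 1 (MGF bound).} For each $i$ and $\lambda>0$, expand $e^{\lambda\xi_i}=1+\lambda\xi_i+\xi_i^2\,\phi(\lambda\xi_i)\lambda^2$, where $\phi(u)=(e^u-1-u)/u^2$ is nondecreasing. Since $\xi_i\le K$ and $\Eb\xi_i=0$,
\begin{equation*}
\Eb e^{\lambda\xi_i}\le 1+\frac{\Eb\xi_i^2}{K^2}\bigl(e^{\lambda K}-1-\lambda K\bigr).
\end{equation*}
Using $1+y\le e^y$ and independence gives
\begin{equation*}
\log\Eb e^{\lambda S}\le \frac{v}{K^2}\bigl(e^{\lambda K}-1-\lambda K\bigr).
\end{equation*}

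\textbf{Step 2 (sub-gamma comparison).} Compare power series termwise, using $k!\ge 2\cdot 3^{k-2}$ for $k\ge2$. This yields, for $0<\lambda<3/K$,
\begin{equation*}
e^{\lambda K}-1-\lambda K\le \frac{\lambda^2K^2}{2(1-\lambda K/3)},
\end{equation*}
and therefore
\begin{equation*}
\log\Eb e^{\lambda S}\le \frac{v\lambda^2}{2(1-c\lambda)},\qquad c=K/3.
\end{equation*}

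\textbf{Step 3 (Chernoff and inversion).} This is the step requiring care. Markov's inequality gives
\begin{equation*}
\Pb(S\ge t)\le\exp\Bigl\{-\sup_{0<\lambda<1/c}\Bigl(\lambda t-\frac{v\lambda^2}{2(1-c\lambda)}\Bigr)\Bigr\}.
\end{equation*}
The Legendre transform of $\lambda\mapsto v\lambda^2/(2(1-c\lambda))$ is $\frac{v}{c^2}\,h_1(ct/v)$, where $h_1(u)=1+u-\sqrt{1+2u}$. Rather than compute it in closed form, I would verify the inversion directly. Set $t=\sqrt{2vx}+cx$ and choose
\begin{equation*}
\lambda=\frac{\sqrt{2x/v}}{1+c\sqrt{2x/v}},
\end{equation*}
so that $1-c\lambda=1/(1+c\sqrt{2x/v})$. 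Substituting, $\lambda t-v\lambda^2/(2(1-c\lambda))$ simplifies algebraically to exactly $x$. Hence $\Pb(S\ge t)\le e^{-x}$.

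\textbf{Lower tail and degenerate cases.} The lower tail follows by applying the same argument to $-\xi_i$, which satisfies the same hypotheses. The degenerate case $v=0$ is trivial, since then $S=0$ almost surely.
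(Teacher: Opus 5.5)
Your proof is correct, and it takes a different route from the paper. The paper cites Bennett's tail bound $\Pb(\sum_i\xi_i\ge nt)\le\exp[-(n\sigma^2/K^2)h(Kt/\sigma^2)]$, with $h(y)=(1+y)\log(1+y)-y$, as given. It notes that the threshold corresponds to $Kt/\sigma^2=\sqrt{2s}+s/3$ with $s=K^2x/(n\sigma^2)$. This reduces everything to the scalar inequality $h(\sqrt{2s}+s/3)\ge s$, which the paper checks by substituting $z=\sqrt{2s}$ and using a monotonicity argument on derivatives.

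You instead derive Bennett's MGF bound from scratch and relax it to the sub-gamma form $v\lambda^2/\{2(1-\lambda K/3)\}$ via $k!\ge 2\cdot 3^{k-2}$. You then invert the Chernoff bound with an explicit $\lambda$ at which the exponent equals $x$ exactly. The algebra checks out: with $a=\sqrt{2x/v}$ one gets $\lambda t=va^2(1+ca/2)/(1+ca)$ and $v\lambda^2/\{2(1-c\lambda)\}=va^2/\{2(1+ca)\}$, whose difference is $va^2/2=x$, and $\lambda<1/c$ holds automatically.

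This is the Boucheron--Lugosi--Massart derivation of Bernstein's inequality. It is self-contained, replaces the calculus on $h$ by exact algebra, and needs only the one-sided bound $\xi_i\le K$ for each tail. Since the lemma only claims the Bernstein-type threshold $\sqrt{2vx}+Kx/3$, the information lost in your Step 2 costs nothing here.

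The paper's route is shorter once Bennett's theorem is cited, and it keeps the sharper function $h$ in view. The paper needs that elsewhere: the step $h(x)\ge x\{\log(1+x)-1\}$ in the proof of Proposition~\ref{prop:sup_upper} exploits the $y\log y$ growth of $h$, which the sub-gamma relaxation discards.

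One small remark. Your treatment of $v=0$, and the division by $K^2$ in Step 1, tacitly use $K>0$, as does the paper's proof. This is really a tacit hypothesis of the lemma itself, since for $K=\sigma=0$ the left-hand side of the display equals $1$.
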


\begin{proof}
Bennett's inequality gives
\begin{equation*}
    \Pb\left(\sum_{i=1}^n\xi_i\ge nt\right)
    \le
    \exp\left[-{\frac{n\sigma^2}{K^2}}h\left({\frac{Kt}{\sigma^2}}\right)\right],
    \qquad
    h(y)=(1+y)\log(1+y)-y.
\end{equation*}
It is enough to prove $h(\sqrt{2s}+s/3)\ge s$ for every $s\ge0$. With $z=\sqrt{2s}$, this is $h(z+z^2/6)\ge z^2/2$. The derivative of the difference is
\begin{equation*}
    \left(1+{\frac{z}{3}}\right)\log(1+z+z^2/6)-z.
\end{equation*}
This derivative is nonnegative because
\begin{equation*}
    {\frac{d}{dz}}\left\{\log(1+z+z^2/6)-{\frac{z}{1+z/3}}\right\}
    ={\frac{z^2(2z+9)}{(z+3)^2(z^2+6z+6)}}\ge0,
\end{equation*}
and the expression in braces is zero at $z=0$. Substituting $s=K^2x/(n\sigma^2)$ proves \eqref{eq:bennett_consequence}.
\end{proof}

\begin{lemma}\label{lem:order}
Let $p\ge1$, $k\in[p]$, and $x\in\Real^p$. For $1\le m\le k$, let $J_m$ be uniformly distributed over subsets of $[p]$ of cardinality $\ceil{p/m}$. Let $|x|_{(m)}$ denote the $m$:th largest value among $|x(1)|,\ldots,|x(p)|$. Then
\begin{equation}\label{eq:order_hit}
    |x|_{(m)}\le (1-e^{-1})^{-1}\Eb_{J_m}\max_{j\in J_m}|x(j)|.
\end{equation}
Consequently, with $m_s=2^s\wedge k$ and $S=\ceil{\log_2 k}$,
\begin{equation}\label{eq:order_sparse}
    \norm{x}_{(k),2}
    \le
    (1-e^{-1})^{-1}\sum_{s=0}^S\sqrt{m_s}\,
    \Eb_{J_{m_s}}\max_{j\in J_{m_s}}|x(j)|.
\end{equation}
\end{lemma}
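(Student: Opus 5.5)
To prove \eqref{eq:order_hit}, I will use a hitting argument. Fix $x$ and let $T\subseteq[p]$ be a set of $m$ indices at which the $m$ largest values $|x|_{(1)},\ldots,|x|_{(m)}$ are attained. If $J_m\cap T\neq\emptyset$, then $\max_{j\in J_m}|x(j)|\ge|x|_{(m)}$. Hence
\[
\Eb_{J_m}\max_{j\in J_m}|x(j)|\ge |x|_{(m)}\,\Pb(J_m\cap T\neq\emptyset),
\]
and it suffices to show that $\Pb(J_m\cap T=\emptyset)\le e^{-1}$. Put $L=\ceil{p/m}$. For a uniform $L$-subset,
\[
\Pb(J_m\cap T=\emptyset)=\binom{p-m}{L}\Big/\binom{p}{L}=\prod_{i=0}^{L-1}\frac{p-m-i}{p-i}\le\Big(1-\frac mp\Big)^{L}\le e^{-mL/p}\le e^{-1},
\]
because $L\ge p/m$. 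If $L>p-m$, the probability is simply zero.

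For \eqref{eq:order_sparse}, I will group the decreasing rearrangement into dyadic blocks. Write
\[
\norm{x}_{(k),2}^2=\sum_{j=1}^k|x|_{(j)}^2\le \sum_{s=0}^{S}\ \sum_{j\in I_s}|x|_{(j)}^2 ,
\]
where $I_0=\{1\}$ and $I_s=\{2^{s-1}+1,\ldots,2^s\wedge k\}$ for $s\ge1$. Each block $I_s$ has cardinality at most $2^{s-1}\le m_s$. Monotonicity of the rearrangement gives $|x|_{(j)}\le|x|_{(2^{s-1}+1)}$ on $I_s$, but that index does not match $m_s$. It is cleaner to bound the block by its smallest index, $|x|_{(j)}\le |x|_{(m_{s-1})}$ with $m_{s-1}=2^{s-1}$ when $s\ge1$. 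This gives
\[
\Big(\sum_{j\in I_s}|x|_{(j)}^2\Big)^{1/2}\le \sqrt{m_s}\,|x|_{(m_{s-1})}.
\]
The index shift is awkward for the exact statement, so instead I will bound each block directly by $\sqrt{|I_s|}\,|x|_{(\min I_s)}$. The term $s=0$ gives $|x|_{(1)}=|x|_{(m_0)}$. For $s\ge1$, the bound $|x|_{(\min I_s)}\le |x|_{(2^{s-1})}$ with $|I_s|\le 2^{s-1}$ yields $\sqrt{2^{s-1}}\,|x|_{(m_{s-1})}\le\sqrt{m_{s-1}}\,|x|_{(m_{s-1})}$. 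Summing with $\sqrt{\sum a_s^2}\le\sum a_s$ then gives
\[
\norm{x}_{(k),2}\le |x|_{(1)}+\sum_{s=1}^{S}\sqrt{m_{s-1}}\,|x|_{(m_{s-1})}\le 2\sum_{s=0}^{S}\sqrt{m_s}\,|x|_{(m_s)},
\]
which carries an extra factor $2$. To avoid this factor, I will note that the $s=0$ block and the $s=1$ block can share index $m_0$. Both are bounded by $|x|_{(1)}$, with total squared mass at most $2|x|_{(1)}^2$, and I will assign these to $\sqrt{m_0}\,|x|_{(m_0)}+\sqrt{m_1}\,|x|_{(m_1)}$ by reindexing. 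In general, blocks $\{2^{s}+1,\ldots,2^{s+1}\}$ are bounded by $\sqrt{2^{s}}\,|x|_{(2^s)}=\sqrt{m_s}\,|x|_{(m_s)}$ for $s\le S-1$, plus the single leading term $|x|_{(1)}=\sqrt{m_0}\,|x|_{(m_0)}$. This covers every index up to $2^S\ge k$ using the terms $s=0,\ldots,S$ (the leading term uses $s=0$ and the blocks use $s=0,\ldots,S-1$). That double use of $s=0$ must then be absorbed by the $s=S$ term, which can be done since $m_S=k\ge m_0$ but not $|x|_{(k)}\ge|x|_{(1)}$. The main obstacle is exactly this bookkeeping at the constant level. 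If it fails, I will accept a harmless factor $2$, or note that the sum over $s$ dominates via $\sqrt{m_s}\,|x|_{(m_s)}$ with $m_s$ capped at $k$ in the last block $\{2^{S-1}+1,\ldots,k\}$.

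Finally, I apply \eqref{eq:order_hit} termwise with $m=m_s\le k$ to replace each $|x|_{(m_s)}$ by $(1-e^{-1})^{-1}\Eb_{J_{m_s}}\max_{j\in J_{m_s}}|x(j)|$. This yields \eqref{eq:order_sparse}, up to the constant bookkeeping just discussed.
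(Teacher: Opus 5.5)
Your proof of \eqref{eq:order_hit} is correct and is the paper's hitting argument: the same bound $\binom{p-m}{L}/\binom{p}{L}\le(1-m/p)^L\le e^{-1}$, and the same trivial case $L>p-m$.

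\textbf{The gap is in \eqref{eq:order_sparse}: as written, you do not reach the stated constant.} Every partition you try uses blocks whose smallest index is $2^{s}+1$ (or $2^{s-1}+1$), which is not one of the $m_s$. Bounding such a block by the preceding dyadic index then uses $|x|_{(1)}$ twice. Your ``reindexing'' repair would require
\[
\sqrt{2}\,|x|_{(1)}\le |x|_{(1)}+\sqrt{2}\,|x|_{(2)},
\]
which fails when $|x|_{(2)}$ is small relative to $|x|_{(1)}$. As you note yourself, the surplus cannot be absorbed by the $s=S$ term either. The fallback with a factor $2$ would be harmless for the corollaries, whose constants are only claimed to be universal. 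It is nevertheless strictly weaker than the lemma as stated.

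\textbf{The missing idea is to close the blocks on the left.} Since $2^{S-1}<k\le 2^S$, one has $m_s=2^s$ for $s<S$ and $m_0<m_1<\cdots<m_S=k$. Partition $\{1,\ldots,k\}$ into
\[
\{m_s,m_s+1,\ldots,m_{s+1}-1\},\quad 0\le s\le S-1,\qquad\text{and}\qquad \{m_S\};
\]
for $k=1$ only the singleton is present. Each block has two properties:
\begin{enumerate}[label=(\roman*)]
\item The $s$-th block starts at $m_s$ itself, so every entry satisfies $|x|_{(j)}\le|x|_{(m_s)}$.
\item It has $m_{s+1}-m_s\le m_s$ elements. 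This is an equality for $s\le S-2$, and for $s=S-1$ it reads $k-2^{S-1}\le 2^{S-1}$.
\end{enumerate}
The final singleton contributes $|x|_{(k)}^2\le m_S|x|_{(m_S)}^2$. Hence
\[
\norm{x}_{(k),2}^2\le\sum_{s=0}^S m_s|x|_{(m_s)}^2\le\Big(\sum_{s=0}^S\sqrt{m_s}\,|x|_{(m_s)}\Big)^2,
\]
with no index shift and no double counting. This is the partition that makes the paper's one-line dyadic-block claim work. Combining it termwise with \eqref{eq:order_hit}, as you planned, gives \eqref{eq:order_sparse} with constant $(1-e^{-1})^{-1}$.
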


\begin{proof}
Let $T_m$ be the set of indices corresponding to the $m$ largest absolute coordinates. If $\ceil{p/m}\le p-m$, then
\begin{equation*}
    \Pb(J_m\cap T_m=\varnothing)
    ={\frac{\binom{p-m}{\ceil{p/m}}}{\binom{p}{\ceil{p/m}}}}
    \le\left(1-{\frac{m}{p}}\right)^{\ceil{p/m}}
    \le e^{-1}.
\end{equation*}
If $\ceil{p/m}>p-m$, the left side is zero. Therefore,
\begin{equation*}
    \Eb_{J_m}\max_{j\in J_m}|x(j)|\geq |x|_{(m)}\Pb(J_m\cap T_m\neq \varnothing)\geq (1-e^{-1})|x|_{(m)}.
\end{equation*} Thus \eqref{eq:order_hit} holds.
For nonincreasing $b_1\ge\cdots\ge b_p\ge0$,
\begin{equation*}
    \left(\sum_{j=1}^k b_j^2\right)^{1/2}
    \le\sum_{s=0}^S\sqrt{m_s}\,b_{m_s},
\end{equation*}
by partitioning $\{1,\ldots,k\}$ into the dyadic blocks generated by the $m_s$'s. Applying this with $b_j=|x|_{(j)}$ and using \eqref{eq:order_hit} proves \eqref{eq:order_sparse}.
\end{proof}

\begin{lemma}\label{lem:sum}
For $m_s=2^s\wedge k$ and $S=\ceil{\log_2 k}$, one has
\begin{equation}\label{eq:sum_one}
    \sum_{s=0}^S\sqrt{m_s}\le(3+\sqrt{2})\sqrt{k}.
\end{equation}
Moreover, for every $1/2\le\beta\le1$,
\begin{equation}\label{eq:sum_log}
    \sum_{s=0}^S\sqrt{m_s}
    \left({\frac{\log(2p/m_s)}{n}}\right)^\beta
    \le
    (9+5\sqrt{2})\sqrt{k}
    \left({\frac{\log(2p/k)}{n}}\right)^\beta.
\end{equation}
For every $\beta\ge0$ there is a finite constant $C_\beta$ such that
\begin{equation}\label{eq:sum_log_power}
    \sum_{s=0}^S\sqrt{m_s}
    \{1\vee \log^\beta(2p/m_s)\}
    \le C_\beta\sqrt{k}\{1\vee\log^\beta(2p/k)\}.
\end{equation}
If $\log\Lambda_q(k)>1$, then
\begin{equation}\label{eq:sum_log_delta}
    \sum_{s=0}^S\sqrt{m_s}
    \left(
    {\frac{\log(2p/m_s)}{n\log\left\{{\frac{B^2}{(\sigma\wedge B)^2}}
    \left({\frac{\log(2p/m_s)}{n}}\right)^{1-2/q}\right\}}}
    \right)^{1-1/q}\le
    (9+5\sqrt{2})\sqrt{k}
    \left({\frac{\log(2p/k)}{n\log\Lambda_q(k)}}\right)^{1-1/q}.
\end{equation}
\end{lemma}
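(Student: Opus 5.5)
The four claims of Lemma~\ref{lem:sum} are dyadic summation estimates. For \eqref{eq:sum_one} I will split the index range at $s_0=\lfloor\log_2 k\rfloor$. For $s\le s_0$ we have $m_s=2^s$, so $\sum_{s\le s_0}2^{s/2}\le 2^{s_0/2}\sqrt2/(\sqrt2-1)=(2+\sqrt2)2^{s_0/2}\le(2+\sqrt2)\sqrt k$. The remaining term $s=S$, if $S>s_0$, has $m_S=k$ and contributes at most $\sqrt k$. This gives $(3+\sqrt2)\sqrt k$.

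For \eqref{eq:sum_log} the strategy is to control $\log(2p/m_s)$ by $\log(2p/k)$ plus a linear term in the dyadic distance. Write $t=s_0-s\ge0$, so that $m_s\ge k2^{-t-1}$. Then
\[
\log(2p/m_s)\le\log(2p/k)+(t+1)\log2\le (2+t)\log(2p/k),
\]
using $\log(2p/k)\ge\log 2$. Since $\beta\le1$, this gives $(\log(2p/m_s))^\beta\le(2+t)(\log(2p/k))^\beta$. The sum therefore reduces to
\[
\sqrt k\sum_{t\ge0}2^{-t/2}(2+t)
\]
plus the single term $s=S$. The series evaluates to a constant of the form $a+b\sqrt2$; I will compute it explicitly and check that it fits within $9+5\sqrt2$, possibly by using $(2+t)^\beta\le(2+t)$ more carefully, or by using $\beta\ge1/2$ to gain a factor. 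The constant is the only delicate part here. Estimate \eqref{eq:sum_log_power} follows by the same argument with $(2+t)^\beta$ in place of $(2+t)$, since $\sum_t 2^{-t/2}(2+t)^\beta<\infty$.

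Estimate \eqref{eq:sum_log_delta} is the main obstacle, because the denominator $\Delta_s=\log\{B^2(\sigma\wedge B)^{-2}(\log(2p/m_s)/n)^{1-2/q}\}$ also varies with $s$. The key observation is monotonicity. Since $m_s\le k$ and $1-2/q\ge0$, we get $\Delta_s\ge\log\Lambda_q(k)>1$ for every $s$, exactly as in the proof of Corollary~\ref{cor:upper}. Hence each summand is bounded by
\[
\sqrt{m_s}\left(\frac{\log(2p/m_s)}{n\log\Lambda_q(k)}\right)^{1-1/q}.
\]
The exponent $\beta=1-1/q$ lies in $[1/2,1)$, so I can apply \eqref{eq:sum_log} with $n\log\Lambda_q(k)$ in place of $n$. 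That bound was proved for arbitrary positive $n$, since the argument only used the ratio structure. This yields the same constant $9+5\sqrt2$. I will state \eqref{eq:sum_log} in a form valid for any positive normalizer so that this substitution is legitimate.
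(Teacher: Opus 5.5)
Your proposal is correct and is essentially the paper's proof. Your index $t=\lfloor\log_2 k\rfloor-s$ is, up to a shift by one, the paper's $j=S-s$. You obtain the same termwise bounds $\sqrt{m_s}\le\sqrt{k}\,2^{-t/2}$ and $\log(2p/m_s)\le(t+2)\log(2p/k)$, and the same monotonicity reduction of \eqref{eq:sum_log_delta} to \eqref{eq:sum_log} with $\beta=1-1/q$.

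The constant you left open needs no refinement. Indeed $\sum_{t\ge0}(t+2)2^{-t/2}=(6+4\sqrt2)+(2+\sqrt2)=8+5\sqrt2$, so adding the possible extra term $s=S$ gives exactly $9+5\sqrt2$.
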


\begin{proof}
For $s<S$, put $j=S-s$. Then $j\ge1$, $m_s=2^{S-j}$, $k>2^{S-1}$, and hence
\begin{equation*}
    \sqrt{m_s}\le \sqrt{2k}\,2^{-j/2}.
\end{equation*} This implies that $\sum_{s=0}^S\sqrt{m_s}=
    \sqrt{k}+\sum_{s=0}^{S-1}2^{s/2}\leq\sqrt{k}+\sqrt{2k}\sum_{j=1}^{\infty}2^{-j/2}=(3+\sqrt2)\sqrt{k}$. Also $k\le2^S$ gives
\begin{equation*}
    \log(2p/m_s)=\log(2p/k)+\log(k/m_s)\le \log(2p/k)+j\log2\le (j+1)\log(2p/k),
\end{equation*}
because $\log(2p/k)\ge\log2$. Therefore, for $1/2\le\beta\le1$,
\begin{align*}
    \sum_{s=0}^S\sqrt{m_s}
    \left({\frac{\log(2p/m_s)}{n}}\right)^\beta
    &\le \sqrt{k}\left({\frac{\log(2p/k)}{n}}\right)^\beta
    \left\{1+\sqrt2\sum_{j=1}^\infty 2^{-j/2}(j+1)\right\} \\
    &= (9+5\sqrt{2})\sqrt{k}\left({\frac{\log(2p/k)}{n}}\right)^\beta.
\end{align*} For the last equality, we used $1 + \sqrt{2}\sum_{j=1}^\infty (j+1)\left(\frac{1}{\sqrt{2}}\right)^j = 1 + \sqrt{2}\left[\frac{1}{(1-1/\sqrt{2})^2} - 1\right]=9+5\sqrt{2}$. The proof of \eqref{eq:sum_log_power} is identical: using
$\log(2p/m_s)\le (j+1)\log(2p/k)$ for $s<S$ gives a convergent geometric series
$\sum_{j\ge1}2^{-j/2}(j+1)^\beta$.

For \eqref{eq:sum_log_delta}, since $m_s\le k$ and $1-2/q\ge0$,
\begin{equation*}
\log\left\{
{\frac{B^2}{(\sigma\wedge B)^2}}
\left({\frac{\log(2p/m_s)}{n}}\right)^{1-2/q}
\right\}
\ge
\log\Lambda_q(k).
\end{equation*}
Hence
\begin{equation*}
\sqrt{m_s}
\left(
{\frac{\log(2p/m_s)}{n\log\left\{
{\frac{B^2}{(\sigma\wedge B)^2}}
\left({\frac{\log(2p/m_s)}{n}}\right)^{1-2/q}
\right\}}}
\right)^{1-1/q} \le
{\frac{\sqrt{m_s}\,{\log(2p/m_s)}^{1-1/q}}{n^{1-1/q}{\log\Lambda_q(k)}^{1-1/q}}} .
\end{equation*}
Summing over $s$ and applying the preceding bound with $\beta=1-1/q$ proves \eqref{eq:sum_log_delta}.
\end{proof}

\begin{lemma}\label{lem:block}
Let $p_k=\ceil{p/k}$. Then
\begin{equation}\label{eq:block_amp}
    \Ec^*_{n,p,k,q}(\sigma,B)
    \ge {\sqrt{\frac{k}{2}}}\Ec^*_{n,p_k,1,q}(\sigma,B)\quad\mbox{and}\quad \Ec_{n,p,k,\psi_\alpha}^{\infty,*}(\sigma,K)\geq\sqrt{\frac{k}{2}}\Ec_{n,p_k,1,\psi_\alpha}^{\infty,*}(\sigma,K).
\end{equation}If $\log\Lambda_q(k)>1$ and $\log(2p/k)\ge\log\Lambda_q(k)$, then
\begin{equation}\label{eq:condition_preserved}
    \log(2p_k)
    \ge
    \log\left\{ {\frac{B^2}{(\sigma\wedge B)^2}}
    \left({\frac{\log(2p_k)}{n}}\right)^{1-2/q}\right\},
\end{equation}
and
\begin{equation}\label{eq:scale_preserved}
    {\frac{\log(2p_k)}{\log\left\{ {\frac{B^2}{(\sigma\wedge B)^2}}
    \left({\frac{\log(2p_k)}{n}}\right)^{1-2/q}\right\}}}
    \ge
    {\frac{1}{2}}{\frac{\log(2p/k)}{\log\Lambda_q(k)}}.
\end{equation}
\end{lemma}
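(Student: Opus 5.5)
The plan has two parts: the block embedding \eqref{eq:block_amp}, which is structural, and the two parameter comparisons \eqref{eq:condition_preserved}--\eqref{eq:scale_preserved}, which are elementary inequalities.

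\emph{Block embedding.} Fix an iid law $P^\circ$ of $X^\circ\in\Real^{p_k}$ in $\Pc_{1,p_k}(q,\sigma,B)$ (respectively in $\Pc^{\psi_\alpha,\infty}_{1,p_k}(\sigma,K)$). Let $L=\floor{p/p_k}$. Define $X\in\Real^p$ by $X((r-1)p_k+\ell)=X^\circ(\ell)$ for $1\le r\le L$ and $1\le \ell\le p_k$, and set the remaining coordinates to zero.

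This $X$ is admissible. It is centered, and every coordinate variance is either a coordinate variance of $X^\circ$ or zero. Moreover $\max_j|X(j)|=\max_\ell|X^\circ(\ell)|$, so the envelope moment and the $\psi_\alpha$ envelope norm are unchanged.

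Now take $S^\circ=n^{-1}\sum_iX_i^\circ$ and pick $\ell^*$ attaining $\|S^\circ\|_\infty$. Choose the set $J$ of the $\min\{k,L\}$ copies of $\ell^*$. This gives
\[
\|S\|_{(k),2}\ge\sqrt{\min\{k,L\}}\,\|S^\circ\|_\infty .
\]
To compare with $\sqrt{k/2}$, I need $L\ge k/2$. Since $p_k<p/k+1$, we get $p/p_k>k\,p/(p+k)\ge k/2$ because $k\le p$. As $L$ is an integer, $L\ge\lceil k/2\rceil$ when $p/p_k$ is an integer, and otherwise $L\ge\floor{k\,p/(p+k)}$. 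This needs a short check, and I would handle it directly: if $k\le p/2$ then $p_k\le 2p/k$ is not enough, so I would instead show $L\,p_k>p-p_k$, hence $L>p/p_k-1$. The clean route is: $L\ge1$ always, and $L\ge p/p_k-1\ge k p/(p+k)-1$. For $k\ge2$ this is at least $k/2-1$. I would resolve the small cases separately, with $k=1$ trivial. Taking expectations and then the supremum over $P^\circ$ gives both inequalities in \eqref{eq:block_amp}. The same argument serves both classes, and the bookkeeping $L\ge k/2$ is the only delicate point.

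\emph{Parameter comparisons.} Write $a=B^2/(\sigma\wedge B)^2\ge1$ and $\beta=1-2/q\in[0,1)$, and set $f(y)=\log a+\beta\log(y/n)$. Then $\log\Lambda_q(k)=f(\log(2p/k))$, and the left side of \eqref{eq:condition_preserved} asks for $\log(2p_k)\ge f(\log(2p_k))$.

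The key observation is that $g(y)=y-f(y)$ is nondecreasing once $y\ge\beta$, since $g'(y)=1-\beta/y$. We have $y_0=\log(2p/k)\ge \log2$ and $g(y_0)\ge0$ by hypothesis, and $y_1=\log(2p_k)\ge y_0$. The issue is whether $y_0\ge\beta$. If $y_0<\beta<1$, I would use the hypothesis instead: $\log\Lambda_q(k)>1>y_0$ contradicts $y_0\ge\log\Lambda_q(k)$, so in fact $y_0>1>\beta$. Hence $g(y_1)\ge g(y_0)\ge0$, which proves \eqref{eq:condition_preserved}.

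For \eqref{eq:scale_preserved}, I bound the numerator and denominator separately. For the numerator, $y_1\ge y_0$. For the denominator, $f(y_1)\le f(y_0)+\beta\log(y_1/y_0)$. Using $y_1\le\log(4p/k)\le 2y_0$, this gives $f(y_1)\le f(y_0)+\log2\le 2f(y_0)$, because $f(y_0)=\log\Lambda_q(k)>1$. Therefore
\[
\frac{y_1}{f(y_1)}\ge\frac{y_0}{2f(y_0)},
\]
which is \eqref{eq:scale_preserved}.

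I expect the only real obstacle to be the integer estimate $\floor{p/\ceil{p/k}}\ge k/2$ (or a replacement constant) in the embedding step.
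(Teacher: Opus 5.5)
Your parameter comparisons are correct. The monotonicity of $g(y)=y-f(y)$ for $y\ge\beta$, combined with $y_0\ge\log\Lambda_q(k)>1>\beta$, is a valid substitute for the paper's $\log x\le x-1$ step in \eqref{eq:condition_preserved}. Your bound for \eqref{eq:scale_preserved} is the paper's argument, and the embedding and admissibility check also coincide with the paper's.

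The gap is the step you flag yourself, and your sketch does not close it. The estimate $p/p_k>kp/(p+k)\ge k/2$ uses only the real-valued bound $p_k<p/k+1$, so it yields only $L=\lfloor p/p_k\rfloor\ge\lfloor k/2\rfloor$. That suffices for even $k$. For odd $k=2m+1$ it leaves $L=m<k/2$ open, and the fallback $L>p/p_k-1\ge k/2-1$ is weaker still. This is not a matter of finitely many small cases. For every odd $k\ge3$ your inequalities are compatible with $L=(k-1)/2$; for $k=3$ they give only $L\ge1$, while $\min\{L,k\}\ge 3/2$ requires $L\ge2$. So as written you obtain \eqref{eq:block_amp} only with a constant smaller than $\sqrt{k/2}$.

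The missing idea is the integrality of $p$:
\begin{itemize}
\item From $p_k-1<p/k$ you get $k(p_k-1)<p$, hence $p\ge k(p_k-1)+1$.
\item It follows that $p/p_k\ge k-(k-1)/p_k\ge(k+1)/2$ whenever $p_k\ge2$.
\item If $p_k=1$, then $p=k$ and $L=k$.
\end{itemize}
In both cases $L\ge\lceil k/2\rceil$, so $\min\{L,k\}\ge\lceil k/2\rceil\ge k/2$. This is exactly the fact $\lfloor p/p_k\rfloor\ge\lceil k/2\rceil$ on which the paper's proof rests; the paper states it without derivation.
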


\begin{proof}
Note that $p/k\le p_k\le p/k+1$. For \eqref{eq:block_amp}, take an iid admissible law in dimension $p_k$. For $1\le i\le n$, write the corresponding vector as $X_i^0\in\Real^{p_k}$ and construct $X_i\in\Real^p$ by repeating $X_i^0$ exactly $\floor{p/p_k}$ times and filling the remaining coordinates, if any, with zeros; that is,
\begin{equation*}
X_i((r-1)p_k+\ell)=X_i^0(\ell),
\qquad
1\le r\le \floor{p/p_k},\quad 1\le \ell\le p_k,
\end{equation*}
and $X_i(j)=0$ for the remaining coordinates. The variance and envelope constraints are unchanged, because every nonzero coordinate of $X_i$ is one coordinate of $X_i^0$. Since $\floor{p/p_k}\geq \ceil{k/2}$, for every realization,
\begin{equation*}
\left\Vert {\frac{1}{n}}\sum_{i=1}^nX_i\right\Vert_{(k),2} \ge
\sqrt{\floor{p/p_k}\wedge k}
\left\Vert {\frac{1}{n}}\sum_{i=1}^nX_i^0\right\Vert_\infty  \ge
{\sqrt{\frac{k}{2}}}
\left\Vert {\frac{1}{n}}\sum_{i=1}^nX_i^0\right\Vert_\infty .
\end{equation*}
Taking expectations and then taking the supremum over all admissible laws in dimension $p_k$ proves \eqref{eq:block_amp}.

It remains to prove \eqref{eq:condition_preserved} and \eqref{eq:scale_preserved}. We have
\begin{equation*}
\log\left\{ {\frac{B^2}{(\sigma\wedge B)^2}}
    \left({\frac{\log(2p_k)}{n}}\right)^{1-2/q}\right\} =
    \log\Lambda_q(k)
    +\left(1-{\frac{2}{q}}\right)
    \log\left({\frac{\log(2p_k)}{\log(2p/k)}}\right).
\end{equation*}
The second term is between zero and $\log2$. Since $\log\Lambda_q(k)>1$, the denominator in \eqref{eq:scale_preserved} is at most $2\log\Lambda_q(k)$, while the numerator is at least $\log(2p/k)$. This proves \eqref{eq:scale_preserved}. For \eqref{eq:condition_preserved}, use $\log x\le x-1$ for $x\ge1$ to obtain
\begin{align*}
&\log\Lambda_q(k)+\left(1-{\frac{2}{q}}\right)\log\left({\frac{\log(2p_k)}{\log(2p/k)}}\right)   \le
    \log(2p/k)+\log\left({\frac{\log(2p_k)}{\log(2p/k)}}\right)  \\
&\qquad\le
    \log(2p/k)+{\frac{\log(2p_k)}{\log(2p/k)}}-1
    \le \log(2p_k),
\end{align*}
where the last inequality is equivalent to
\begin{equation*}
    \{\log(2p/k)-1\}
    \left({\frac{\log(2p_k)}{\log(2p/k)}}-1\right)\ge0.
\end{equation*}
This proves the lemma.
\end{proof}

\bibliographystyle{plainnat}
\bibliography{bib}
\end{document}